\newtheorem{prop}{Proposition}
\newtheorem{defn}{Definition}
\newcommand{\dx}{{\Delta x}}
\newcommand{\dy}{{\Delta y}}
\newcommand{\xc}{{x^{\rm{c}}}}
\newcommand{\yc}{{y^{\rm{c}}}}
\newcommand{\argmin}{\operatornamewithlimits{argmin}}
\newcommand{\EMD}{\operatornamewithlimits{EMD}}
\newcommand{\contrast}{\operatornamewithlimits{cont}}
\newcommand{\diag}{\operatornamewithlimits{diag}}
\newcommand{\TV}{\operatornamewithlimits{TV}}
\newcommand{\SNR}{\operatornamewithlimits{SNR}}
\newcommand{\struc}[1] {\operatornamewithlimits{struc}\left [ #1 \right ]}
\newcommand{\Rea}{\mathbb{R}}
\newcommand{\Nat}{\mathbb{N}}
\newcommand{\norm}[1]{\left\lVert#1\right\rVert}
\newcommand{\expec}[1] {\mathbb{E}\left [ #1 \right ] }
\newcommand{\var}{\mathrm{Var}}
\renewcommand{\eqref}[1]{{Eq.~\ref{#1}}}
\newcommand{\defeq}{\vcentcolon=}
\DeclareRobustCommand{\rchi}{{\mathpalette\irchi\relax}}
\newcommand{\irchi}[2]{\raisebox{\depth}{$#1\chi$}}
\DeclareMathOperator{\Tr}{Tr}
\DeclareMathOperator*{\essinf}{ess\,inf}
\DeclareMathOperator*{\nullspace}{null}
\newcommand\restr[2]{{
  \left.\kern-\nulldelimiterspace 
  #1 
  \vphantom{\big|} 
  \right|_{#2} 
  }}
\newcommandx{\unsure}[2][1=]{\todo[linecolor=red,backgroundcolor=red!25,bordercolor=red,#1]{#2}}
\newcommandx{\change}[2][1=]{\todo[linecolor=blue,backgroundcolor=blue!25,bordercolor=blue,#1]{#2}}
\newcommandx{\info}[2][1=]{\todo[linecolor=OliveGreen,backgroundcolor=OliveGreen!25,bordercolor=OliveGreen,#1]{#2}}
\newcommandx{\improvement}[2][1=]{\todo[linecolor=Plum,backgroundcolor=Plum!25,bordercolor=Plum,#1]{#2}}
\title{Diagnosing Forward Operator Error Using Optimal Transport
\thanks
{This manuscript has been authored, in part, by UT-Battelle, LLC, under Contract No. DE-AC0500OR22725 with the U.S. Department of Energy. The United States Government retains and the publisher, by accepting the article for publication, acknowledges that the United States Government retains a non-exclusive, paid-up, irrevocable, world-wide license to publish or reproduce the published form of this manuscript, or allow others to do so, for the United States Government purposes. The Department of Energy will provide public access to these results of federally sponsored research in accordance with the DOE Public Access Plan (\texttt{http://energy.gov/downloads/doe-public-access-plan}).}}
\titlerunning{Operator Error and Optimal Transport}
\author{Michael A. Puthawala,
Cory D. Hauck,
and Stanley J. Osher
}
\date{\today}
\institute{
	Micheal A. Puthawala \at
	Department of Mathematics
	University of California 
	Los Angeles, CA 90095 \\
	\email{mputhawala@ucla.edu} \\
	This author's research was sponsored by Department of Energy grant DOE-SC0013838 and NSF (STROBE), NSFC 11671005.
	\and
	Cory D. Hauck \at
	Computational Mathematics Group,
	Computer Science and Mathematics Division,
	Oak Ridge National Laboratory,
	Oak Ridge, TN 37831 USA\\
	\email{hauckc@ornl.gov}\\
	This author's research was sponsored by the Office of Advanced Scientific
	Computing Research and performed at the Oak Ridge National Laboratory,
	which is managed by UT-Battelle, LLC under Contract No. DE-AC05-00OR22725.
	\and
	Stanley J. Osher \at
	Department of Mathematics
	University of California 
	Los Angeles, CA 90095 \\
	\email{sjo@math.ucla.edu} \\
	This author's research was sponsored by Department of Energy grant DOE-SC0013838 and NSF (STROBE), NSFC 11671005.
}
\begin{document}

\maketitle

 

\section{Abstract}\label{sec::abstract}

We investigate overdetermined linear inverse problems for which the forward operator may not be given accurately. We introduce a new tool called the {\it structure}, based on the Wasserstein distance, and propose the use of this to diagnose and remedy forward operator error. Computing the structure turns out to use an easy calculation for a Euclidean homogeneous degree one distance, the Earth Mover's Distance, based on recently developed algorithms. The structure is proven to distinguish between noise and signals in the residual and gives a plan to help recover the true direct operator in some interesting cases. We expect to use this technique not only to diagnose the error, but also to correct it, which we do in some simple cases presented below.

\section{Introduction} \label{sec::introduction}

\subsection{Motivation} \label{sec::motivation}

From medical imaging \cite{arridge1999optical} to petroleum engineering \cite{oliver2008inverse} to meteorology \cite{chahine1970inverse}, inverse problems are ubiquitous in science, engineering and mathematics. The goal of such problems is to recover an unknown quantity $u$ given a known forward operator $L$ and measurement $b$ such that $L(u) = b$.  In this work we consider the case where $L$ is a linear operator and write $L(u) \equiv Lu$.  While this choice facilitates a simple analysis in some places, the computational techniques developed here can be extended to consider non-linear operators.

A considerable amount of work has been dedicated to solving inverse problems for a variety of forward operators, especially when $L$ is linear. Powerful techniques have been developed that perform well in the presence of noise in $b,$ singularities in $L$ and various constraints on the solution $u$ \cite{kirsch2011introduction}.

Despite some great successes in the field of inverse problems, there are still mathematical challenges that are difficult to address. One of these, which is important in a bevy of applications, is the calibration of forward operators.  For example, computed tomography (CT) machines are calibrated using known phantoms  for which the desired reconstruction is known exactly \cite{schneider1996calibration}; in synthetic aperture radar, reflectors provide a known ground truth on which devices and reconstruction algorithms are tuned \cite{freeman1992sar}; and in some plasma imaging problems, the forward model has unknown parameters, and the model itself is possibly incomplete \cite{wingen2015regularization}.

Often the calibration problem can be formulated mathematically by considering a family of forward operators $L_\theta,$ parameterized by  $\theta \in \Theta \subset \Rea^p$, with a unique $\hat \theta$ such that $L_{\hat \theta}$ best represents the underlying physical system.  In other words, there exists a $\hat \theta$ such that $L = L_{\hat \theta}$ \cite{schneider2012tomographic,wingen2015regularization}.  If $\hat{\theta}$ is estimated poorly, then an accurate approximation of $u$ is often impossible, even with very sophisticated inverse procedures. 

The problem of detecting forward operator error is similar to that of blind deconvolution in image processing \cite{chan2005image}, where the task is to identify a blurring kernel and recover an image from a given blurry signal. The application of the blurring operator with the image can also be represented in the form $Lu = b$ where the action of $L$ gives the convolution with the blurring kernel. One important difference between the calibration problem considered here and the problem of blind deconvolution is that we will be considering overdetermined problems.

\subsection{Prior Work} \label{sec::prior-work}

Methods for detecting and correcting for errors within the forward operator exist. One approach is total least squares \cite{golub1999tikhonov}, which generalizes the standard least squares method by allowing for error in $L$.  This is expressed by the minimization problem

\begin{align}
    \label{eqn::total-least-squares-problem}
    \begin{alignedat}{2}
        \min_{ \bv, \bJ} &\norm{\bL - \bJ}_F^2 + \norm{\bb - \bJ \bv}^2_2,
    \end{alignedat}
\end{align}
where $\bL$ is the matrix representations of $L$, $\bb$ is the vector representation of $b$, and $\| \cdot \|_F$ is the Frobenius norm.

This approach has the advantage of being relatively easy to analyze, robust under noise in the entries of $\bL$ and solvable using standard linear algebra software. However, for calibration problems, the goal is not to remove entry-wise error in $\bL_\theta$. Instead we seek a value of $\theta \approx \hat \theta$. Total least squares provides good reconstructions when $\bL$ is a matrix whose entries are corrupted by noise. However it requires modification in order to be applied to the parametric calibration problem. In particular, adding the requirement $J = L_{\theta}$ for $\theta \in \Theta$ to Eq. \ref{eqn::total-least-squares-problem} make the resulting minimization problem more difficult to solve, and so may require code beyond standard linear algebra software.
 
Another common approach for calibration is based on Bayesian techniques \cite{kennedy2001bayesian}.  In this setting measured data (possibly noisy) is assumed to be the sum of model output and a discrepancy function, both of which are modeled as Gaussian processes.  We do not go into details of the Bayesian approach in this paper but intend to make comparisons with the EMD approach in future work. However, it is worth noting that the results in this paper do not rely on a Gaussian noise model.
 
Our work is motivated in part by \cite{engquist2013application,engquist2016optimal,yang2018application}, where the authors use the quadratic Wasserstein metric to solve Full-Waveform Inversion (FWI) problems.  In particular, it is demonstrated that the quadratic Wasserstein metric, as opposed to the $L_2$ norm, provides an effective measure of the misfit between given data and computed solution.

\subsection{Our contribution} \label{sec::our-contribution}

In this paper we introduce a new tool, called the structure, that is based on the Earth Mover's Distance (EMD) from optimal transport.  We show that the structure is sensitive to modeling errors in $L$, but insensitive to noise in $b$.
For simple functional forms of $L_\theta$, we demonstrate that the structure can successfully recover the correct parameter $\hat \theta$.  
The method can be implemented as a wrapper around existing inverse problem solvers and thus can be easily integrated into preexisting work flows for solving inverse problems with minimal modifications to existing code bases.  Moreover, due to recent advancements in the calculation of the EMD \cite{li2016fast,li2017parallel}, the additional cost is reasonable.

Our work extends that of \cite{engquist2013application,engquist2016optimal,yang2018application} by considering different inverse problems, a more general noise model, and we use a different Wasserstein metric. See section \ref{sec::comparison-with-prior-work} for more detail. We also show that new algorithms for computing the EMD can be combined with inverse problem solvers to diagnose forward operator error in general inverse problems.

\section{Background} \label{sec::background}

\subsection{Inverse Problems} \label{sec::background::i-p-background}

Let $\cU \subset{L^\infty(X)}$ and $\cB \subset{L^\infty(Y)}$ be function spaces defined over bounded rectangular domains $X \subset \bbR^{d_x}$ and $Y \subset \bbR^{d_y}$, respectively.  We consider problems which come from the discretization of the linear equation
\begin{equation}
    \label{eqn::linear-continuous}
    \cL f = g
\end{equation}
where $f \in \cU$,  $g \in \cB$, and $\cL: \cU \to \cB$ is a bounded linear operator.

To discretize Eq. \ref{eqn::linear-continuous}, we assume that for some $\dx > 0$ and $\dy >0$, $X$ and $Y$ can be partitioned into hypercubes $K^x$ and $K^y$, respectively, of size = $\dx^{d_y}$ and $\dy^{d_y}$, respectively, such that $X = \cup_i \overline{K^x_i}$ and $Y = \cup_j \overline{K^y_j}$. We then let 
\begin{align}
    \cU_{\dx} &= \{ f_{\dx} \in \cU : f_{\dx}|_{K_x}~\text{is constant for all $K_x \subset X$} \} \\
    \cB_{\dy} &= \{ g_{\dy} \in \cB : g_{\dy}|_{K_y}~\text{is constant for all $K_y \subset Y$} \}.
\end{align}
The discrete version of Eq. \ref{eqn::linear-continuous} takes the form
\begin{equation}
    \label{eqn::linear-inverse-problem}
    L u = b  ,
\end{equation}
where $u \in \cU_{\dx}$, $b \in \cB_{\dy}$, and $L \colon \cU_{\dx} \to \cB_{\dy}$ is a bounded linear operator that approximates $\cL$.  The exact forms of $L$, $u$, and $b$ depend on the discretization.  In the appendix, we present a discretization based on the assumption that $\cL$ is generated by line integrals over paths $\cP_y \subset X$ that are parameterized by elements $y \in Y$.

Solving Eq. \ref{eqn::linear-inverse-problem} directly may not be practical if the condition number of $L$ is large, as noise in $b$ can be strongly amplified in the inversion process.  A variational approach to address this difficulty is instead to solve

\begin{equation}
    \label{eqn::linear-regularized-inverse-problem}
    \tilde u = \tilde L^{-1}b 
        \equiv \argmin_{v \in \cU_{\dx}} \norm{Lv - b}_2^2 + \Phi(v;\lambda),
\end{equation}
where $\Phi\colon \cU_{\dx} \rightarrow \Rea^+$ is a regularizing functional with parameter $\lambda \in \Rea^+$. If $\Phi = 0,$ then Eq. \ref{eqn::linear-regularized-inverse-problem} gives the least squares solution of Eq. \ref{eqn::linear-inverse-problem}. Nontrivial examples of $\Phi$ (which may require more regularity than $L^{\infty}(X)$) include 
\begin{enumerate}
    \item $\Phi(v;\lambda) = \lambda\norm{Cv}_2^2$, where the linear operator $C$ approximates a differential operator (Generalized Tikhonov regularization);
    \item $\Phi(v;\lambda) = \lambda\TV(v)$ (Total Variation regularization \cite{rudin1992nonlinear});
    \item $\Phi(v;\lambda) = \lambda\norm{Cv}_1$, where $C$ is a transformation to a space in which $u$ is known to be sparse (Basis Pursuit in Compressed Sensing \cite{goldstein2009split});
    \item a weighted sum of the coefficients in some basis of $U$ (such as a wavelet basis \cite{mallat1989multiresolution,daubechies1988orthonormal} or singular vectors \cite{hansen1993use}).
\end{enumerate}

These regularization methods are able to stably invert the operator $L,$ at least approximately in the sense that $L \tilde u = L\tilde L^{-1}b \approx b$.  Moreover, solutions of Eq. \ref{eqn::linear-regularized-inverse-problem} are able to mitigate the effect of error within $b$; that is, even if $b$ is corrupted (e.g. by noise), $\tilde u$ will be a reasonable reconstruction. In contrast, a modest error in $L$ will likely result in a terrible reconstruction, regardless of the choice of $\Phi$.  An example of this behavior is given in Fig. \ref{fig::bad-reconstruction}.

\begin{figure}
    \centering
    \begin{subfigure}[b]{.28\linewidth}
        \centering
        \includegraphics[width=\textwidth,keepaspectratio]{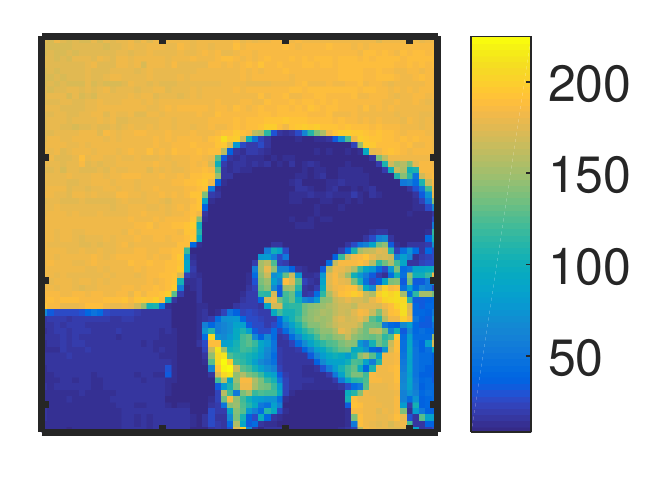}
        \subcaption{Ground truth, $u$.}
        \label{fig::bad-reconstruction::ground-truth}
    \end{subfigure}
    \begin{subfigure}[b]{.28\linewidth}
        \centering
        \includegraphics[width=\textwidth,keepaspectratio]{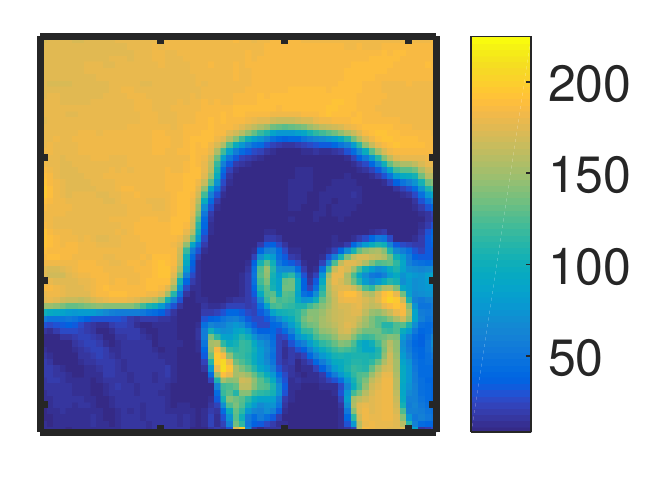}
        \subcaption{$u_{\theta}$ when $\theta = 2.3 = \hat \theta$}
        \label{fig::bad-reconstruction::correct-op}
    \end{subfigure}
    \begin{subfigure}[b]{.28\linewidth}
        \centering
        \includegraphics[width=\textwidth,keepaspectratio]{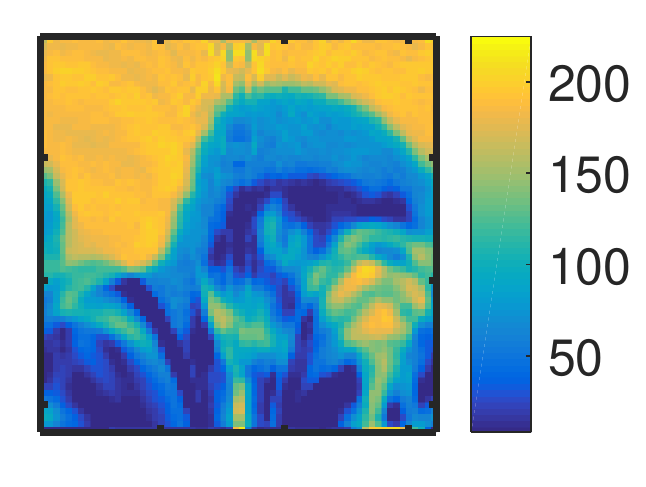}
        \subcaption{$u_{\theta}$ when $\theta = 2.4 \not \approx \hat \theta$}
        \label{fig::bad-reconstruction::incorrect-op}
    \end{subfigure}
    \caption{Demonstration of the sensitivity in the reconstruction in Eq. \ref{eqn::linear-regularized-inverse-problem} to errors in the forward operator.  In this example $L = L_{\hat{\theta}}$ is the `academic operator' from \cite{schneider2012tomographic}, $\theta$ is the parameter $R$ in \cite[Table 1]{schneider2012tomographic}, and $\hat \theta = 2.3$. In this problem Tikhonov regularization was used to define the approximate inverse in Eq. \ref{eqn::linear-regularized-inverse-problem}. }
    \label{fig::bad-reconstruction}
\end{figure}

For the purposes of this paper, we assume that there exists a family $\{ L_\theta\}_{\theta \in \Theta}$ of forward operators parameterized by $\theta \in \Theta$, and a unique $\hat \theta \in \Theta$ such that $L_{\hat \theta}=L$. 
Given a noisy measurement $b + \eta$, where $\eta$ is the noise, and a model parameter $\theta$, the approximate reconstruction of $u$, based on the regularization in Eq. \ref{eqn::linear-regularized-inverse-problem} with operator $L_\theta$, is given by
\begin{equation}
    \label{eqn::linear-noisy-inverse-problem}
    \tilde u_{\theta,\eta} = \tilde L^{-1}_\theta(b + \eta).
\end{equation} 
where the tilde denotes the solution to a regularized problem of the form in Eq. \ref{eqn::linear-regularized-inverse-problem} (where the choice of $\Phi$ is understood).  This notation will be used throughout the remainder of the paper.  

We define the residual as 
\begin{align}
    \label{eqn::inovation-definition}
    r_{\theta,\eta} &= (b + \eta) - L\tilde u_{\theta,\eta} =  (I - L_\theta \tilde L_\theta^{-1} ) (b + \eta)
\end{align}
where $I$ is the identity operator.  The residual is the main object that we study to determine when the parameter $\theta$ is poorly chosen. 

\subsection{Earth Mover's Distance}

A key tool in our analysis of forward operator error is the Earth Mover's Distance.  Below we summarize the presentation in \cite{li2017parallel}.

\begin{defn}[Wasserstein Distance] Let $\Omega \subset \Rea^d$ be convex and compact, and let $c \colon \Omega \times \Omega \rightarrow [0,+\infty)$ be a distance. Given two non-negative distributions $\rho_1 \colon \Omega \rightarrow \Rea^+, \rho_2 \colon \Omega \rightarrow \Rea^+$ such that $\int_\Omega \rho_1 = \int_\Omega \rho_2$. For a given $p \in \Nat$ the $p$'th Wasserstein distance is
\begin{equation}
    \label{eqn::wasserstein-distance-definition}
    \begin{alignedat}{2}
           W_p(\rho_1,\rho_2) = &\left (\min_{\pi \geq 0} \int_{\Omega \times \Omega} c(x^{(1)},x^{(2)})^p\pi(x^{(1)},x^{(2)}) dx^{(1)} dx^{(2)}\right )^{1/p},\\
           &  
            \begin{aligned} 
            \text{\rm subject to:} \quad & \int_\Omega \pi(x^{(1)},x^{(2)})dx^{(2)} = \rho_1(x^{(1)}),\\
                                         & \int_\Omega \pi(x^{(1)},x^{(2)})dx^{(1)} = \rho_2(x^{(2)}).
            \end{aligned}
    \end{alignedat}
\end{equation}
\end{defn}

The function $c$ is called the ground metric and each feasible function $\pi$ is referred to as a transport plan. In this work we set $c(x^{(1)},x^{(2)}) = \norm{x^{(1)} - x^{(2)}}_2$. The Earth Mover's Distance we define here is a special case of the Wasserstein distance where $p = 1$.

\begin{defn}[Earth Mover's Distance] Let $\Omega \subset \Rea^d$ be convex and compact, and let $c \colon \Omega \times \Omega \rightarrow [0,+\infty)$ be a distance. Given two non-negative distributions $\rho_1 \colon \Omega \rightarrow \Rea^+, \rho_2 \colon \Omega \rightarrow \Rea^+$ such that $\int_\Omega \rho_1 = \int_\Omega \rho_2$. The Earth Mover's Distance (EMD) between $\rho_1$ and $\rho_2$ is
\begin{equation}
    \label{eqn::emd-definition}
           \EMD(\rho_1,\rho_2) = W_1(\rho_1,\rho_2).
\end{equation}
\end{defn}
The EMD can also be written in the equivalent form \cite{evans1999differential}

\begin{equation}
    \begin{alignedat}{2}
        \EMD(\rho_1,\rho_2) = & \min_m \int_\Omega\norm{m(x)}_2 dx,\\
        &  
            \begin{aligned}
            \text{\rm subject to:} \quad  & \nabla\cdot m(x) + \rho_2(x) - \rho_1(x) = 0,\\
                                          & m(x) \cdot n(x) = 0 \quad \forall x \in \partial\Omega,
        \end{aligned}
    \end{alignedat} \label{eqn::benier-benamou}
\end{equation}
where $n(x)$ is the normal vector at $x \in \partial\Omega$. This formulation is the basis for recently developed algorithms in \cite{li2016fast,li2017parallel}.

\section{Applying EMD to inverse problems} \label{sec::new-work}

\subsection{Residual and operator correctness}

In a variational reconstruction procedure, the quality of the fit can be investigated by an analysis of $r_{\theta, \eta}$ and $\Phi(\tilde u_{\theta,\eta})$. Generally, the larger $\lambda$ the larger the first term and the smaller the second and vice-versa. Typically the value of $\lambda$ is chosen in an attempt to balance these contributions \cite{hansen1992analysis,hansen1993use}. However if an incorrect forward operator is used, $r_{\theta, \eta}$ will have an additional contribution that does not depend on $\lambda$. 

The characterization above can be made precise in the case of Tikhonov regularization by introducing a matrix notation and using Generalized Singular Value Decomposition \cite[Chapter 8.7.3]{golub1996matrix}. To this end, let $n = \operatorname{dim}(\cU_{\dx}) $ and $m = \operatorname{dim}(\cB_{\dy})$, and expand $u$ and $b$ in terms of characteristic basis functions:
\begin{equation}
    \label{eqn::discrete-basis-expansion}
    u(x) = \sum_{j=1}^n u_j \rchi_{K^x_j}(x) \quad \text{and} \quad b(y) = \sum_{i=1}^m b_i \rchi_{K^y_i}(y).
\end{equation}
Then Eq. \ref{eqn::linear-inverse-problem} becomes
\begin{equation}
    \bL \bu = \bb.
\end{equation}
where $\bu = (u_1,\dots ,u_n)$, $\bb = (b_1,\dots,b_m)$, and $\bL$ has components
\begin{align}
    L_{i,j} = \frac{1}{\Delta y^{d_y}}\int_Y \chi_{K^y_i}L \chi_{K^x_j} dy.
\end{align}

\begin{defn}[GSVD]
Let $\bA \in \Rea^{m \times n}$ and $\bB \in \Rea^{o \times n}$ be two matrices such that $\nullspace(\bA) \cap \nullspace(\bB) = \emptyset$. The Generalized Singular Value Decomposition (GSVD) of the matrix pair $(\bA,\bB)$ is given by
\begin{equation}
   \bA = \bU {\bf\Sigma} \bZ^T 
   \quad \text{and} \quad
   \bB = \bV {\bf\Gamma} \bZ^T  ,
\end{equation}
where $\bU \in \Rea^{m \times n}$ and $\bV \in \Rea^{o \times n}$ are orthogonal; $\bZ \in \Rea^{n \times n}$ is invertible; and 
\begin{align}
    {\bf\Sigma} = \diag(\sigma_1,\dots, \sigma_n)\in \Rea^{n \times n}
    &\quad \text{and} \quad 
    {\bf\Gamma} = \diag(\gamma_1,\dots, \gamma_{n})\in \Rea^{n \times n}
\end{align}
are diagonal matrices such that 
\begin{equation}
    1 \geq \sigma_1 \geq \dots \geq \sigma_n \geq 0
    \quad \text{and} \quad 
    0 \leq \gamma_1 \leq \dots \leq \gamma_n \leq 1,
\end{equation}
with  ${\bf\Sigma}^2 + {\bf\Gamma}^2 = \bI$.
\end{defn}

Using the GSVD, we obtain the following:

\begin{prop}[Residual with Tikhonov regularization] \label{thm::residual-tikhonov}
    Suppose $\bL\bu =\bb$, where $\bL \in \Rea^{m\times n}$ and $m > n$. Let $\tilde \bu_{\theta,\eta}$ be defined by Eq. \ref{eqn::linear-noisy-inverse-problem} with $\Phi(\bv;\lambda) = \lambda\norm{\bC\bv}^2_2$, where $\bC \in \Rea^{o \times n},$ and a noise vector $\bseta \in \bbR^m$ whose elements are independent and spherically symmetric---that is, $\bseta$ and $\bQ\bseta$ have the same probability distribution function for any orthogonal matrix $\bQ \in \Rea^{m \times m}.$
    Assume that $\nullspace(\bL_\theta) \cap \nullspace(\bC) = \emptyset$ so that the GSVD 
    \begin{equation}
    \label{eq::LC-GSVD}
        \bL_\theta = \bU_\theta {\bf\Sigma}_{\theta}\bZ_\theta^T
        \qquad 
        \bC = \bV_\theta {\bf\Gamma}_{\theta}\bZ_\theta^T
    \end{equation}
    for the matrix pair $(\bL_\theta,\bC)$ is well-defined.  Then the residual $\br_{ \theta,\eta}$ associated to $\tilde \bu_{\theta,\eta}$ satisfies the bound
    \begin{align} \label{eqn::residual-tikhonov::incorrect-op}
       \norm{\br_{ \theta, \eta}}^2_2
        \leq&  \norm{(\bI - \bU_\theta\bU_\theta ^T)\bb}^2_2
            +  \norm{(\bb - \bL_\theta \bu)}^2_2 \nonumber\\
            &+ \frac{1}{4} \lambda\norm{\bZ^T_\theta\bu}^2_2 
            + \frac{m - n + \Tr(\hat \bD^2_{\theta,\lambda}))}{m} \expec{\norm{\bseta}^2_2} .
    \end{align}
    The proof of Proposition \ref{thm::residual-tikhonov} is in the appendix.
\end{prop}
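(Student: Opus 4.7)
The plan is to express $\tilde \bu_{\theta,\eta}$ in closed form via the GSVD, decompose the residual into orthogonal pieces whose norms can be bounded individually, and then use the spherical symmetry of $\bseta$ to handle the variance contribution.

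First I would write the Tikhonov minimizer explicitly as $\tilde \bu_{\theta,\eta} = (\bL_\theta^T \bL_\theta + \lambda \bC^T \bC)^{-1}\bL_\theta^T(\bb + \bseta)$. Substituting Eq.~\ref{eq::LC-GSVD} and simplifying via $\bU_\theta^T\bU_\theta = \bI$ and $\bV_\theta^T\bV_\theta = \bI$ yields $\bL_\theta \tilde \bu_{\theta,\eta} = \bU_\theta \bD_{\theta,\lambda} \bU_\theta^T(\bb + \bseta)$ with $\bD_{\theta,\lambda} \defeq {\bf\Sigma}_\theta^2 ({\bf\Sigma}_\theta^2 + \lambda {\bf\Gamma}_\theta^2)^{-1}$. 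Setting $\hat \bD_{\theta,\lambda} \defeq \bI - \bD_{\theta,\lambda} = \lambda{\bf\Gamma}_\theta^2({\bf\Sigma}_\theta^2+\lambda{\bf\Gamma}_\theta^2)^{-1}$, the residual takes the form
\begin{equation*}
\br_{\theta,\eta} = \bigl[(\bI - \bU_\theta \bU_\theta^T) + \bU_\theta \hat \bD_{\theta,\lambda} \bU_\theta^T\bigr](\bb + \bseta).
\end{equation*}
Since $(\bI - \bU_\theta\bU_\theta^T)\bU_\theta = 0$, the two summands have mutually orthogonal ranges, so Pythagoras supplies both the pointwise identity $\norm{\br_{\theta,\eta}}^2 = \norm{(\bI - \bU_\theta\bU_\theta^T)(\bb + \bseta)}^2 + \norm{\hat\bD_{\theta,\lambda}\bU_\theta^T(\bb + \bseta)}^2$ and the clean trace count $\Tr\bigl((\bI-\bU_\theta\bU_\theta^T) + \bU_\theta\hat\bD_{\theta,\lambda}^2\bU_\theta^T\bigr) = (m-n) + \Tr(\hat\bD_{\theta,\lambda}^2)$.

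Next I would take expectations. Spherical symmetry of $\bseta$ forces $\expec{\bseta} = 0$ and covariance $\tfrac{\expec{\norm{\bseta}^2_2}}{m}\bI$, so the cross-terms in $\norm{\br_{\theta,\eta}}^2$ vanish in expectation and the $\bseta$-contribution evaluates to exactly $\tfrac{m - n + \Tr(\hat\bD_{\theta,\lambda}^2)}{m}\,\expec{\norm{\bseta}^2_2}$, which matches the final summand of the bound. For the deterministic piece, $\norm{(\bI - \bU_\theta\bU_\theta^T)\bb}^2$ already matches the first summand, so only $\norm{\hat\bD_{\theta,\lambda}\bU_\theta^T \bb}^2$ requires work. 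I would split $\bU_\theta^T \bb = {\bf\Sigma}_\theta \bZ_\theta^T \bu + \bU_\theta^T(\bb - \bL_\theta \bu)$ (using $\bL_\theta \bu = \bU_\theta {\bf\Sigma}_\theta \bZ_\theta^T \bu$) and invoke the AM-GM estimate on the diagonal entries $(\hat\bD_{\theta,\lambda}{\bf\Sigma}_\theta)_{ii} = \lambda\gamma_i^2\sigma_i/(\sigma_i^2 + \lambda\gamma_i^2) \leq \tfrac{\sqrt{\lambda}}{2}\gamma_i \leq \tfrac{\sqrt{\lambda}}{2}$, which gives $\norm{\hat\bD_{\theta,\lambda}{\bf\Sigma}_\theta \bZ_\theta^T \bu}^2 \leq \tfrac{\lambda}{4}\norm{\bZ_\theta^T \bu}^2$. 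Together with $\norm{\hat\bD_{\theta,\lambda}\bU_\theta^T(\bb - \bL_\theta\bu)}^2 \leq \norm{\bb - \bL_\theta\bu}^2$ (both $\hat\bD_{\theta,\lambda}$ and $\bU_\theta^T$ are contractions), these reproduce the remaining summands.

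The main obstacle I anticipate is pinning down the constants exactly. A naive $\norm{x+y}^2 \leq 2\norm{x}^2 + 2\norm{y}^2$ applied to the two pieces of $\hat\bD_{\theta,\lambda}\bU_\theta^T \bb$ would inflate both constants by a factor of two, so reaching precisely $\tfrac{1}{4}\lambda\norm{\bZ_\theta^T \bu}^2 + \norm{\bb - \bL_\theta\bu}^2$ most likely requires a Young-type inequality with an optimized parameter, or exploitation of the identity $(\bI - \bU_\theta\bU_\theta^T)\bb = (\bI - \bU_\theta\bU_\theta^T)(\bb - \bL_\theta\bu)$ to rearrange the first two summands of the bound. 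I also read the LHS as implicitly $\expec{\norm{\br_{\theta,\eta}}^2_2}$ rather than a pathwise quantity, since the RHS depends on $\bseta$ only through $\expec{\norm{\bseta}^2_2}$.
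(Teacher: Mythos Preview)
Your approach is essentially identical to the paper's: the same closed-form Tikhonov solution, the same decomposition $\br_{\theta,\eta} = [(\bI - \bU_\theta\bU_\theta^T) + \bU_\theta\hat\bD_{\theta,\lambda}\bU_\theta^T](\bb + \bseta)$, the same splitting $\bU_\theta^T\bb = {\bf\Sigma}_\theta\bZ_\theta^T\bu + \bU_\theta^T(\bb - \bL_\theta\bu)$, the same AM--GM bound $\hat\bD_{\theta,\lambda}{\bf\Sigma}_\theta \le \tfrac12\sqrt{\lambda}\,\bI$, and the same spherical-symmetry argument for the noise (the paper completes $\bU_\theta$ to an orthogonal matrix $(\bU_\theta\,|\,\bW_\theta)$ rather than invoking the covariance directly, but the content is identical). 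Your reading of the left-hand side as $\expec{\norm{\br_{\theta,\eta}}^2_2}$ is correct.

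Your worry about the constants is legitimate and, in fact, applies equally to the paper's own proof: the paper bounds $\norm{\bU_\theta\hat\bD_{\theta,\lambda}{\bf\Sigma}_\theta\bZ_\theta^T\bu}^2_2 \le \tfrac14\lambda\norm{\bZ_\theta^T\bu}^2_2$ and $\norm{\bU_\theta\hat\bD_{\theta,\lambda}\bU_\theta^T(\bb - \bL_\theta\bu)}^2_2 \le \norm{\bb - \bL_\theta\bu}^2_2$ separately and then simply adds all four bounds, never addressing how $\norm{x+y}^2_2$ becomes $\norm{x}^2_2 + \norm{y}^2_2$ for these two non-orthogonal pieces. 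So the issue you flagged is a looseness in the stated inequality rather than a defect in your plan; you should not expect to recover the exact constants without the factor-of-two inflation you anticipated.
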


This result shows how calibration error can induce $O(1)$ terms (with respect to the regularization parameter $\lambda$) into the residual, the first two terms in Eq.~\ref{eqn::residual-tikhonov::incorrect-op}.  The noise that is orthogonal to the image of $\bL_\theta$ also induces $O(1)$ terms, even if $\theta = \hat \theta$.   Thus it is important to develop tools that can differentiate between these two contributions. For completeness, one should also consider regularization with more general forms of  $\Phi$. Unfortunately in many situations, the operator $\tilde \bL^{-1}_\theta$ is nonlinear, and a rigorous analysis in this vein is much more difficult.

\subsection{Introduction to the structure}

We introduce a mathematical tool to detect contributions to $r_{\theta,\eta}$ that are due to errors in the operator $L$, i.e., when $\theta \neq \hat \theta$, and is insensitive to noise in the residual.  This tool, which we call the structure, is a functional built using the Earth Mover's Distance (EMD). 

\begin{defn}[Structure] \label{def::structure}
For any $f \in L^1(\Omega)$, the structure of $f$ is

\begin{align}
    \struc{f} = \EMD(f^+,f^-),
\end{align}
where
\begin{equation}
    f^+(x) = \max(f(x) - \mu, 0)
   \quad \text{and} \quad
    f^-(x) = \max(\mu - f(x), 0)
\end{equation}
and $\mu = \frac{1}{\norm{\Omega}}\int_\Omega f(x) dx$.
\end{defn}
The following proposition is proven in the appendix.
\begin{prop}[Basic Properties of Structure] \label{thm::EMD-struc-corr-proof}
The operator $\struc{\cdot}$ satisfies the following properties: 
\begin{enumerate}
    \item it is a semi-norm on $L^1(\Omega)$;
    \item for all $g \in L^1(\Omega)$ and $c \in \Rea$,
    \begin{equation}
        \struc{g} = \struc{g + c};
    \end{equation}
    \item $\struc{c} = 0$ for any constant $c \in \bbR$;
    \item if $\rho_1\colon \Omega \rightarrow \Rea^+,$  $\rho_2\colon \Omega \rightarrow \Rea^+$ and $\int_\Omega \rho_1 = \int_{\Omega} \rho_2$,
    \begin{equation}
        \struc{\rho_2 - \rho_1} = \EMD(\rho_1,\rho_2).
    \end{equation}
\end{enumerate}
\end{prop}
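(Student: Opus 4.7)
The plan is to handle (2) and (3) first as essentially bookkeeping. Adding a constant $c$ to $g$ shifts the mean $\mu$ by exactly $c$, so $(g+c) - (\mu + c) = g - \mu$; hence $(g+c)^{\pm} = g^{\pm}$ and (2) follows immediately. Property (3) is then the special case $g = 0$: a constant function has $g^\pm \equiv 0$ and $\EMD(0,0) = 0$.

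Next I would prove (4), which is really the structural claim that connects $\struc{\cdot}$ to the EMD and from which the rest flows. My approach is to invoke the Beckmann formulation \ref{eqn::benier-benamou}. Setting $f = \rho_2 - \rho_1$, the mean of $f$ is zero (by the equal-mass hypothesis), so $\mu = 0$, which gives $f^+ = \max(f,0)$, $f^- = \max(-f,0)$, and hence $f^+ - f^- = f = \rho_2 - \rho_1$. Consequently the divergence constraint $\nabla\cdot m + f^- - f^+ = 0$ appearing in the definition of $\EMD(f^+, f^-)$ coincides (after a sign flip absorbed by $m \mapsto -m$, using $\|m\|_2 = \|-m\|_2$) with the constraint defining $\EMD(\rho_1, \rho_2)$, and both problems have identical objective. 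The two EMDs are therefore equal.

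Property (1) is the substantive one, and I would split it into three pieces. Non-negativity is inherited from the EMD. For homogeneity in $c \in \Rea$, a direct calculation gives $(cf)^+ = c f^+$, $(cf)^- = c f^-$ when $c \geq 0$, and $(cf)^+ = |c| f^-$, $(cf)^- = |c| f^+$ when $c < 0$; combined with positive homogeneity and symmetry of $\EMD$, this yields $\struc{cf} = |c|\struc{f}$. The triangle inequality is where the Beckmann formulation pays off: using (2) I reduce to zero-mean $F = f - \mu_f$ and $G = g - \mu_g$, whose sum is still zero-mean, and apply (4) to rewrite $\struc{f+g}$ as the minimum of $\int_\Omega \|m\|_2\, dx$ over vector fields $m$ with $\nabla\cdot m = F+G$ and $m\cdot n = 0$ on $\partial\Omega$. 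If $m_F$ and $m_G$ are optimal for $F$ and $G$, then $m_F + m_G$ is feasible for $F+G$, and the pointwise bound $\|m_F + m_G\|_2 \leq \|m_F\|_2 + \|m_G\|_2$ integrates to $\struc{f+g} \leq \struc{f} + \struc{g}$.

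The main obstacle, and really the only substantive ingredient, is the step in (4) that identifies the two EMD optimization problems by matching their divergence constraints; once that is in hand, the triangle inequality reduces to linearity of the divergence and subadditivity of $\|\cdot\|_2$, and the remaining properties are algebraic. Working instead from the primal Kantorovich form \ref{eqn::wasserstein-distance-definition} would require an explicit construction of a transport plan between $(f+g)^+$ and $(f+g)^-$ from plans for $f$ and $g$, which is noticeably messier.
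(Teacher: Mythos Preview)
Your proposal is correct and follows essentially the same route as the paper: both arguments hinge on the Beckmann formulation \eqref{eqn::benier-benamou}, exploiting that the feasible set depends only on the difference $\rho_2-\rho_1$ to prove (4), and that the divergence constraint is linear (so $m_F+m_G$ is feasible for $F+G$) to obtain the triangle inequality. The only cosmetic differences are that the paper proves (1) before (4) and handles the triangle inequality via the identity $(f+g)^+-(f+g)^- = (f^+-f^-)+(g^+-g^-)$ rather than first reducing to zero mean, and in (4) it adds $\min(\rho_1,\rho_2)$ to both arguments instead of directly matching constraint sets; these are equivalent expressions of the same observation.
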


Using $\struc{\cdot}$ is a good strategy for detecting operator error for several reasons:

\begin{itemize}
    \item The $\struc{\cdot}$ is small when applied to piecewise noise and large when applied to a (non-constant) smooth function.  (Rigorous statements this effect are made in Section \ref{sec:theorems} below). Thus $\struc{r_{\theta,\eta}}$ will be small when the forward operator is correct and large when it is not.  Although the $\struc{\cdot}$ of a constant is zero, any such contribution to the residual can be discerned by applying a standard norm to its spatial average.

    \item With recent algorithmic advances \cite{li2016fast,li2017parallel}, the underlying $\EMD$ calculation for computing $\struc{\cdot}$ can be performed quickly. For example when $\bb \in \Rea^{256} \times \Rea^{256},$ the structure calculation takes less than a second on consumer grade hardware.
    \item Because its evaluation does not affect the actual inverse procedure, the structure calculation can be incorporated into existing work flows without altering old code. Thus it can be quickly integrated into an existing toolbox for solving inverse problems. 
    \item The $\struc{r_{\theta,\eta}}$ calculation produces not only a number, but also outputs a transport plan (see Figs. \ref{fig::flow-when-alpha-is-0.04}, \ref{fig::flow-when-alpha-is-0.48}). For certain classes of forward operators this additional information can be leveraged to correct forward operators with minimal tuning. This idea will be explored in future work.
\end{itemize}

\subsection{Theoretical Results} \label{sec:theorems}
In this section we establish some theoretical results which support the use of the structure as a tool for diagnosing structural errors in the forward operator of an inverse problem.  The proofs of Theorems \ref{thm::noise-calc}--\ref{thm::smooth-func} are given in Appendix. \ref{sec::proof-of-claims::thm-1-proof}.

   \begin{theorem}[Characterization of noise by structure]\label{thm::noise-calc} Given non-negative integers integers $d$ and $\ell$, let $\Omega = [0,1)^d$ and let $\mathcal{O}_\ell = \left\{ \omega_{\ell,1}, \dots, \omega_{\ell,2^{\ell d}}\right\}$ partition $\Omega$ into $2^{\ell d}$ hypercubes of volume $2^{-\ell d}$. Define $h_\ell: \Omega \to \bbR$ by
    \begin{equation}
        \label{eqn::dyadic-noise-def}
        h_{\ell}(y) = \eta_{\ell,1} \chi_{\ell,1}(y) + \dots + \eta_{\ell,2^{\ell d}}\chi_{\ell,2^{\ell d}}(y)
    \end{equation}
    where
    \begin{equation} 
        \chi_{\ell,i}(y) = \begin{cases}
        1, & x \in \omega_{\ell,i}, \\
        0, & x \not \in \omega_{\ell,i},
        \end{cases}
    \end{equation}
    and $\{\eta_{\ell,i}\}_{i=1}^{2^{\ell d}}$ is a set i.i.d. random variables with mean $\mu$ and variance $\sigma^2$ (See Fig. \ref{fig::noise-split-plot} for a visualization of $h_\ell$.) If $\epsilon_\ell = 2^{-\ell}$, then as $\ell \rightarrow \infty$, $ \epsilon_\ell \rightarrow 0$ and
    \begin{equation}
        \expec{\struc{h_\ell}} \leq \sigma\begin{cases}
    - \epsilon_\ell\log{\epsilon_\ell}, & d = 2,\\
    2\sqrt{d}\epsilon_\ell, & d > 2,
    \end{cases}
    \end{equation}
    \end{theorem}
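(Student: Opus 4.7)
My plan is to bound $\struc{h_\ell}$ by constructing an explicit multiscale transport plan that moves mass hierarchically along the dyadic tree of partitions, and then to control its expected cost. Since the structure is invariant under constant shifts (Property 2 of Proposition \ref{thm::EMD-struc-corr-proof}), I may replace $h_\ell$ with $\tilde h_\ell = h_\ell - \bar h_\ell$, where $\bar h_\ell$ is the empirical mean. For each $k = 0, 1, \dots, \ell$, let $f_k$ be the piecewise constant function whose value on each cube of $\mathcal{O}_k$ equals the average of $h_\ell$ over that cube, so $f_\ell = h_\ell$ and $f_0 \equiv \bar h_\ell$. Telescoping gives $\tilde h_\ell = \sum_{k=0}^{\ell-1} (f_{k+1} - f_k)$, and the semi-norm triangle inequality (Property 1) reduces the problem to bounding $\sum_k \struc{f_{k+1} - f_k}$.

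The key observation is that $f_{k+1} - f_k$ is constant on each scale-$(k+1)$ cube and has mean zero on every scale-$k$ parent cube, so its EMD decomposes as a sum over parent cubes with no mass crossing parent boundaries. Within one parent of side $2^{-k}$, a feasible local plan has cost at most $\sqrt{d}\, 2^{-k}$ (the diameter) times the total moved mass, which equals $\tfrac{1}{2}\sum_j |a_j - \bar a|\cdot 2^{-(k+1)d}$, where $a_1, \dots, a_{2^d}$ are the sub-cube averages and $\bar a$ is their arithmetic mean. Taking expectations is then direct: each $a_j$ is an average of $2^{(\ell-k-1)d}$ i.i.d.\ copies of $\eta$, and a short covariance calculation shows $\var(a_j - \bar a) \leq \var(a_j) = \sigma^2\, 2^{-(\ell-k-1)d}$, so by Jensen $\expec{|a_j - \bar a|} \leq \sigma\, 2^{-(\ell-k-1)d/2}$. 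Summing over the $2^{kd}$ parents produces an expected per-scale contribution of order $\sqrt{d}\, \sigma\, 2^{-k - (\ell-k-1)d/2}$, and after the substitution $m = \ell - 1 - k$ the total expected structure is bounded (up to universal constants) by $\sqrt{d}\, \sigma\, \epsilon_\ell \sum_{m=0}^{\ell-1} 2^{m(1 - d/2)}$.

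The two regimes in the claim then emerge from the behavior of this geometric-in-scale sum. When $d = 2$ every term equals $1$, the sum is $\ell = -\log_2 \epsilon_\ell$, and we recover the $-\epsilon_\ell \log \epsilon_\ell$ bound. When $d > 2$ the ratio $2^{1 - d/2}$ is strictly less than $1$, so the series is bounded uniformly in $\ell$ and only the leading $\epsilon_\ell$ survives. The main technical obstacle I anticipate is verifying that the EMD genuinely decomposes over parents without requiring cross-parent flow; this uses the mean-zero-on-each-parent property together with the Beckmann form in \eqref{eqn::benier-benamou} to localize the optimizing vector field inside each parent cube. The borderline case $d = 2$ requires a bit more care, since the scale-series is logarithmically divergent and produces precisely the extra $\log$ factor asserted in the theorem.
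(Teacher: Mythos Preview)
Your proposal is correct and follows essentially the same multiscale strategy as the paper: reduce to mean zero, telescope along the dyadic tree of averages, apply the semi-norm triangle inequality scale by scale, bound each parent cube's contribution by diameter times moved mass, control the expected moved mass via the variance reduction $\var(a_j)=\sigma^2 2^{-(\ell-k-1)d}$, and sum the resulting geometric series in $2^{1-d/2}$.

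One minor remark on the point you flag as the main technical obstacle: the paper does not actually argue that the optimal Beckmann field localizes to each parent cube. Instead it writes $f_{k+1}-f_k=\sum_j s_j$ as a sum of disjointly supported, individually mean-zero pieces and invokes the semi-norm triangle inequality $\struc{\sum_j s_j}\le\sum_j\struc{s_j}$ directly. This sidesteps any need to reason about cross-parent flow and is a cleaner route than localizing the vector field in \eqref{eqn::benier-benamou}.
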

    where the expectation is with respect to the weights $\eta_{\ell,i}$.

     \begin{figure}[ht!]
    \centering
    \begin{subfigure}{.24\linewidth}
        \centering
        \includegraphics[width=\textwidth,keepaspectratio]{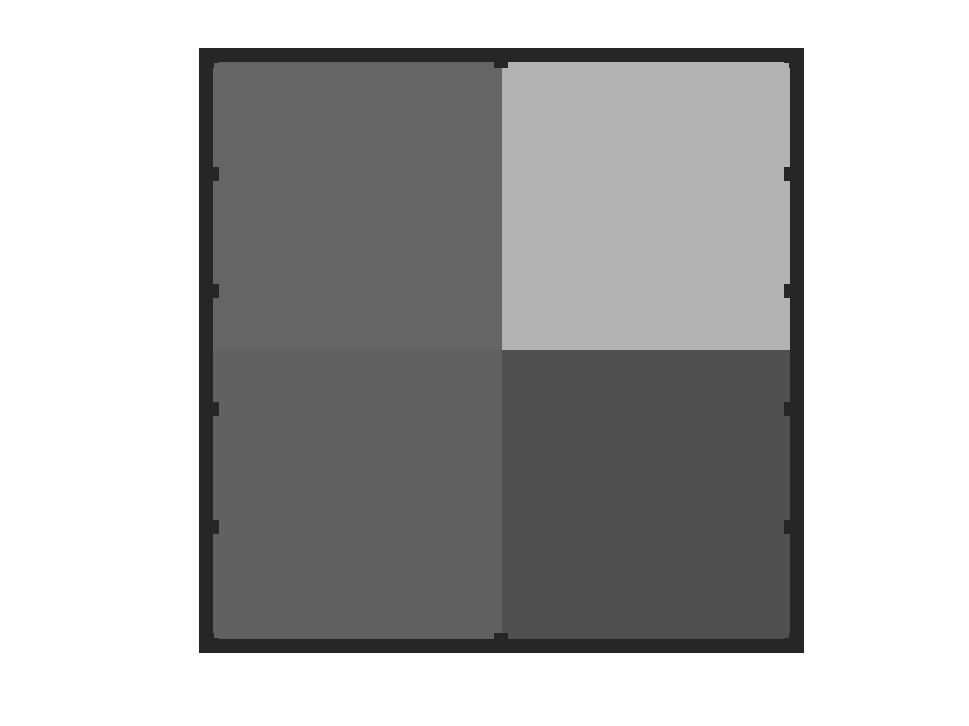}
        \subcaption{$h_1$}
        \label{fig::noise-split-plot-n-1}
    \end{subfigure}
    \begin{subfigure}{.24\linewidth}
        \centering
        \includegraphics[width=\textwidth,keepaspectratio]{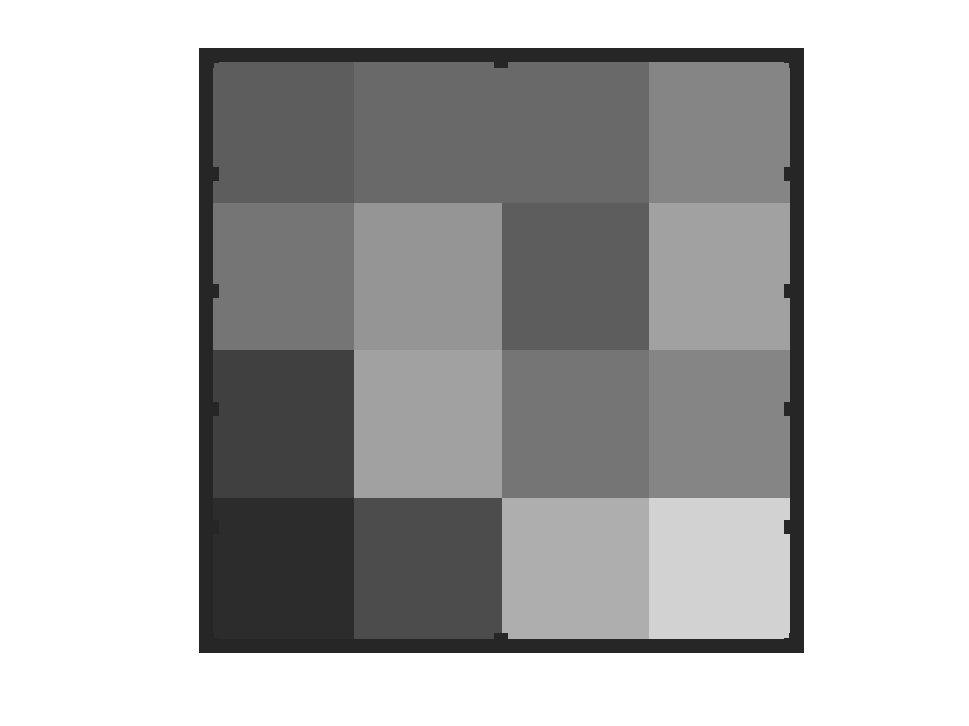}
        \subcaption{$h_2$}
        \label{fig::noise-split-plot-n-2}
    \end{subfigure}
    \begin{subfigure}{.24\linewidth}
        \centering
        \includegraphics[width=\textwidth,keepaspectratio]{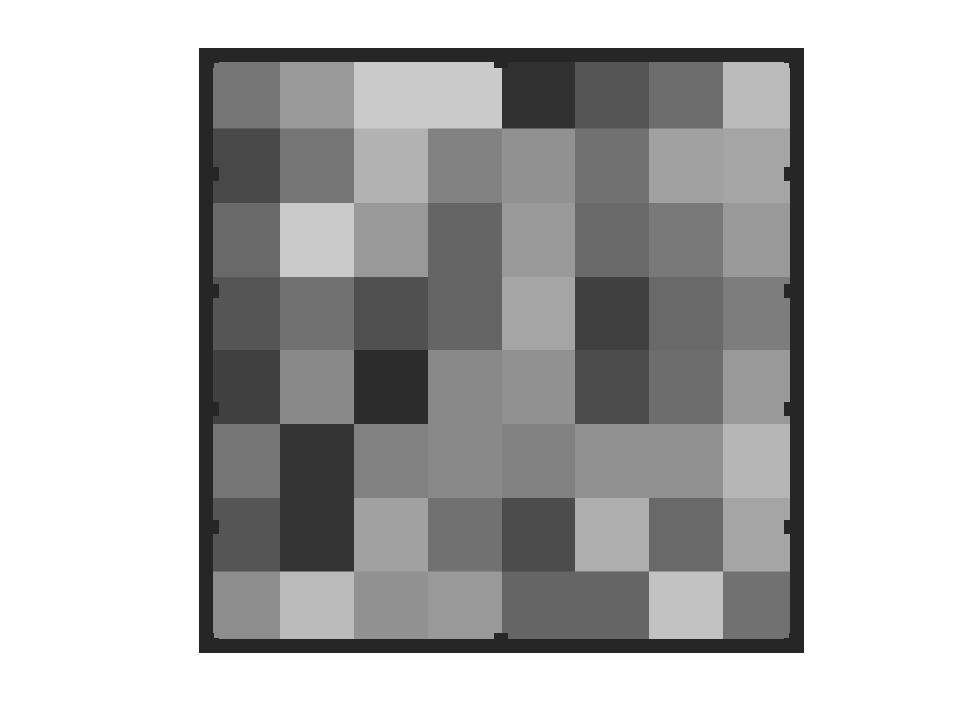}
        \subcaption{$h_3$}
        \label{fig::noise-split-plot-n-3}
    \end{subfigure}
    \begin{subfigure}{.24\linewidth}
        \centering
        \includegraphics[width=\textwidth,keepaspectratio]{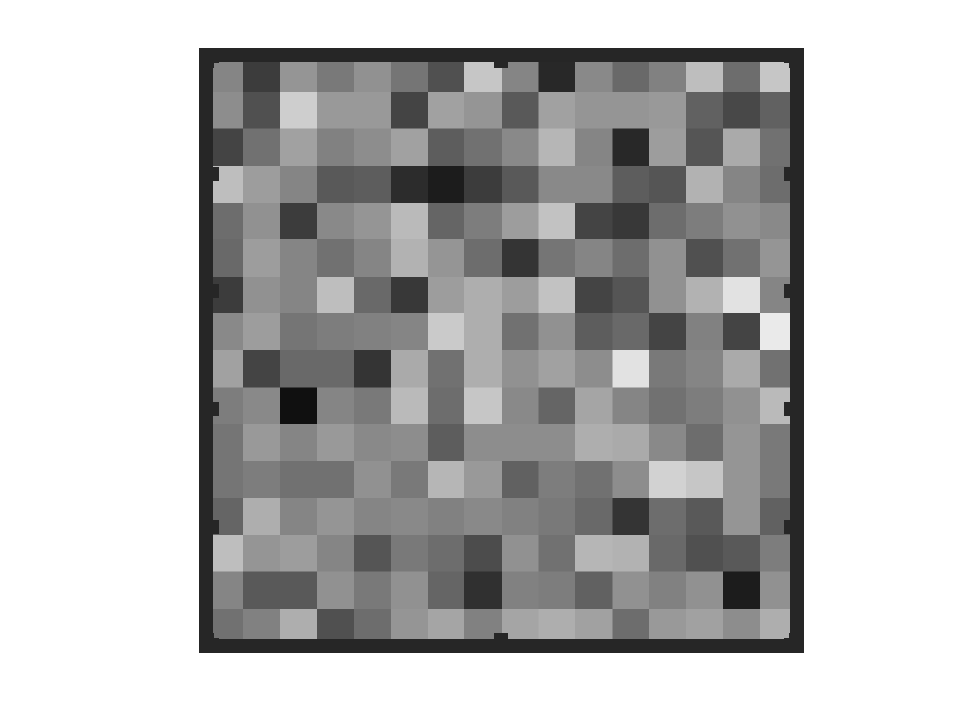}
        \subcaption{$h_4$}
        \label{fig::noise-split-plot-n-4}
    \end{subfigure}
    \caption{Example of $h_\ell$ when $d = 2$, $\mu = 0,$ and $\sigma = 1$.} 
    \label{fig::noise-split-plot}
\end{figure}

    \begin{lemma}[L2 norm of Noise]\label{thm::noise-calc-l2}
    Given the assumptions of Thm. \ref{thm::noise-calc}, suppose further that $\mu =0$.  Then
    \begin{equation}
        \expec{\norm{h_\ell}_2^2} = \sigma^2,
    \end{equation}
    where the expectation is with respect to the weights $\eta_{\ell,i}.$
    \end{lemma}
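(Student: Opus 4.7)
The plan is to exploit the fact that the indicator functions $\chi_{\ell,i}$ have pairwise disjoint support so that the squared $L^2$ norm of $h_\ell$ decouples into a clean sum over cells, and then apply linearity of expectation together with the hypothesis $\mu = 0$.

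First I would expand
\begin{equation}
    \norm{h_\ell}_2^2 = \int_\Omega \left( \sum_{i=1}^{2^{\ell d}} \eta_{\ell,i}\chi_{\ell,i}(y) \right)^2 dy = \sum_{i,j} \eta_{\ell,i}\eta_{\ell,j} \int_\Omega \chi_{\ell,i}(y)\chi_{\ell,j}(y)\, dy.
\end{equation}
Because $\mathcal{O}_\ell$ is a partition of $\Omega$ into hypercubes, the product $\chi_{\ell,i}\chi_{\ell,j}$ vanishes unless $i=j$, and when $i=j$ its integral equals the volume $|\omega_{\ell,i}| = 2^{-\ell d}$. Hence
\begin{equation}
    \norm{h_\ell}_2^2 = 2^{-\ell d}\sum_{i=1}^{2^{\ell d}} \eta_{\ell,i}^2.
\end{equation}

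Next I would take expectations and use linearity together with the i.i.d.\ hypothesis. Since $\mu = 0$, we have $\expec{\eta_{\ell,i}^2} = \var(\eta_{\ell,i}) + \expec{\eta_{\ell,i}}^2 = \sigma^2$ for every $i$. Therefore
\begin{equation}
    \expec{\norm{h_\ell}_2^2} = 2^{-\ell d}\sum_{i=1}^{2^{\ell d}} \expec{\eta_{\ell,i}^2} = 2^{-\ell d} \cdot 2^{\ell d} \cdot \sigma^2 = \sigma^2,
\end{equation}
which is the desired identity.

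There is no substantial obstacle here: the argument is a direct calculation once the disjointness of the supports is used to eliminate the cross terms. The only mild subtlety worth a one-line remark is the need for $\mu=0$ (so that the second moment equals the variance); without this assumption the identity would become $\expec{\norm{h_\ell}_2^2} = \sigma^2 + \mu^2$, but this is not needed for the statement as given.
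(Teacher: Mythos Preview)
Your proof is correct and is essentially identical to the paper's own argument: both expand $\norm{h_\ell}_2^2$ using the disjointness of the $\chi_{\ell,i}$ to obtain $2^{-\ell d}\sum_i \eta_{\ell,i}^2$, then take expectations and use $\expec{\eta_{\ell,i}^2}=\sigma^2$ when $\mu=0$. There is nothing to add.
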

    
    \begin{theorem}[Characterization of a smooth function by structure]\label{thm::smooth-func}  Given the assumptions of Thm. \ref{thm::noise-calc}, let $R_{\ell} \colon \mathcal{B} \rightarrow \mathcal{B}_{\epsilon_\ell}$. If 
    \begin{align}
        R_{\ell} \phi (y) = \frac{1}{\omega_{\ell,i}}\int_{\omega_{\ell,i}} \phi(z) dz, \quad \forall y \in \omega_{\ell,i}.
    \end{align}
    where $\phi \in C^1\left (\overline Y \right)$ then
    \begin{equation}
        \left |\struc{R_\ell \phi} - \struc{\phi}\right | \leq C(|\nabla \phi|) \, d\epsilon_\ell^{2},
    \end{equation}
    where the constant $C$ depends on the maximum of $\nabla\phi$ on $\overline Y$. In particular,
    \begin{equation}
         \struc{R_\ell \phi} \rightarrow \struc{\phi} \mbox{ as } \ell \rightarrow +\infty.
    \end{equation}
    \end{theorem}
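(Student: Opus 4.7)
My plan is to rewrite both structures as suprema over $1$-Lipschitz test functions via Kantorovich--Rubinstein duality and then exploit a one-cell Poincar\'e-type cancellation. The crucial observation that enables this approach is that cell-wise averaging preserves the total integral, $\int_Y R_\ell\phi = \int_Y \phi$, so $\phi$ and $R_\ell\phi$ share the same mean $\mu$. Consequently $\phi^+ - \phi^- = \phi - \mu$ and $(R_\ell\phi)^+ - (R_\ell\phi)^- = R_\ell\phi - \mu$ are zero-mean signed densities whose positive and negative parts are automatically mass-balanced, so Property~4 of Proposition~\ref{thm::EMD-struc-corr-proof} applies and identifies each structure with an EMD between matched densities.

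Next, I would invoke Kantorovich--Rubinstein duality to write
\begin{equation*}
\struc{\phi} \;=\; \EMD(\phi^+,\phi^-) \;=\; \sup_{\mathrm{Lip}(f)\le 1}\ \int_Y f(y)\,(\phi(y)-\mu)\,dy,
\end{equation*}
and the analogous expression for $\struc{R_\ell\phi}$. Applying the elementary inequality $|\sup A - \sup B|\le \sup|A-B|$ over the common Lipschitz-indexed family gives
\begin{equation*}
\bigl|\struc{\phi} - \struc{R_\ell\phi}\bigr| \;\le\; \sup_{\mathrm{Lip}(f)\le 1}\ \biggl| \int_Y f\,(\phi - R_\ell\phi)\,dy \biggr|.
\end{equation*}
On each cell $\omega_{\ell,i}$, the function $\phi - R_\ell\phi$ has zero average, so subtracting the cell-mean $f_i := \tfrac{1}{|\omega_{\ell,i}|}\int_{\omega_{\ell,i}} f$ leaves the integral unchanged:
\begin{equation*}
\int_{\omega_{\ell,i}} f\,(\phi - R_\ell\phi)\,dy \;=\; \int_{\omega_{\ell,i}} (f-f_i)(\phi - R_\ell\phi)\,dy.
\end{equation*}
Since $\mathrm{Lip}(f)\le 1$, $\phi\in C^1(\overline Y)$, and $\mathrm{diam}(\omega_{\ell,i})\le \sqrt d\,\epsilon_\ell$, one has the pointwise bounds $|f-f_i|\le \sqrt d\,\epsilon_\ell$ and $|\phi - R_\ell\phi|\le \sqrt d\,\epsilon_\ell\,\norm{\nabla\phi}_\infty$ on $\omega_{\ell,i}$. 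Multiplying these and summing over all cells (whose total volume is $|Y|$) produces the stated $d\,\epsilon_\ell^2$ estimate with constant proportional to $\norm{\nabla\phi}_\infty\,|Y|$, and the convergence claim is then immediate.

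\textbf{Main obstacle.} The only real subtlety is justifying Kantorovich--Rubinstein duality in this continuous, mass-balanced but not probability-normalized setting; since $\phi^+$ and $\phi^-$ share a common positive total mass, one rescales to probability measures, applies classical KR, and scales back, so this is routine but worth a sentence. An alternative route that bypasses duality entirely is to work with the Benamou--Brenier flux formulation of Eq.~\ref{eqn::benier-benamou} directly: on each cell, solve a small Neumann--Poisson problem to construct a divergence-correcting flux of $L^1$ mass $O(\epsilon_\ell^2\,\norm{\nabla\phi}_\infty)$ per unit volume, add this admissible perturbation to an optimal flux for $\struc{R_\ell\phi}$ to build an admissible flux for $\struc{\phi}$, and then swap the roles to obtain the reverse inequality. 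Both approaches yield the same bound, so I expect no fundamental difficulty.
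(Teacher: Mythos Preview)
Your proof is correct and reaches the same bound, but it takes a genuinely different route from the paper's argument. The paper never invokes Kantorovich--Rubinstein duality; instead it works on the primal side throughout. First it uses Lemma~\ref{lem::struc-and-emd-of-mean} and the EMD triangle inequality (Lemma~\ref{lem::triangle-inequality-emd}) to obtain the reverse triangle inequality
\[
\bigl|\struc{R_\ell\phi}-\struc{\phi}\bigr|\le \EMD(R_\ell\phi,\phi),
\]
then applies cell-wise subadditivity (Lemma~\ref{lem::EMD-subadditivity}) together with the crude bound $\EMD\le \tfrac{\mathrm{diam}}{2}\norm{\cdot}_{L^1}$ (Lemma~\ref{lem::EMD-bound-by-L1-norm}) on each $\omega_{\ell,i}$, and finishes with the same $C^1$ oscillation estimate $|\phi-R_\ell\phi|\le \sqrt d\,\epsilon_\ell\,\|\nabla\phi\|_\infty$ that you use. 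Your approach replaces the primal triangle inequality plus the $L^1$--EMD bound by the dual Lipschitz representation and the inequality $|\sup A-\sup B|\le\sup|A-B|$, followed by the cell-mean subtraction trick for the test function $f$. The paper's route is more self-contained (it uses only the lemmas already established and avoids the KR machinery you flag as the main subtlety), while your route makes the mechanism of cancellation more transparent: the two factors of $\sqrt d\,\epsilon_\ell$ arise symmetrically from the oscillation of $f$ and of $\phi$, rather than from a diameter factor times an $L^1$ norm. Either way the constant is $\|\nabla\phi\|_\infty$ (with $|Y|=1$ here), so the conclusions match.
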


\subsection{Comparison with prior work} \label{sec::comparison-with-prior-work}

The work here is inspired, in part, by the study of seismic imaging inverse problems in \cite{engquist2013application,engquist2016optimal,yang2018application}. There the authors measure the misfit between simulated and measured data using the Wasserstein distance squared $W_2^2(\rho_1,\rho_2) = \left (W_2(\rho_1,\rho_2) \right)^2$. To handle the possibly negative distributions, the authors in \cite{engquist2013application,engquist2016optimal,yang2018application} introduce the {\it misfit} function
\begin{align}
    \label{eqn::engquist-distance-definition}
    d(f,g) &= W^2_2\left (\frac{\max(f,0)}{\int \max(f,0)dx}, \frac{\max(g,0)}{\int \max(g,0)dx}\right ) \nonumber\\
    & \quad + W^2_2\left (\frac{\max(-f,0)}{\int \max(-f,0)dx}, \frac{\max(-g,0)}{\int \max(-g,0)dx}\right )
\end{align}
which plays a similar role to $\struc{f - g}$ in this work. In \cite[Section 2.6]{engquist2013application} the authors show that $d$ is insensitive to noise, with a scaling result that is similar to Thm. \ref{thm::noise-calc} up to a logarithmic factor. Specifically, if $f$ and $g$ are two non-negative functions such that $f - g$ has the form of $h_\ell$, defined in Eq. \ref{eqn::dyadic-noise-def}), with uniformly distributed noise, then
\begin{equation}
    \label{eqn::engquist-noise-result}
    d(f,g) = O(\epsilon_\ell).
\end{equation}

The approach taken in \cite{engquist2013application,engquist2016optimal,yang2018application} differs from the approach in this paper in at least two key ways. First is the choice of $W^2_2$ rather than $W_1$.  This has the following consequences:

\begin{itemize}
\item  $W_2$ and $W_2^2$ have the property of \textit{cyclic monotonicity} (see \cite[Sec. 2.1]{evans1997partial} for a definition and proof), which can be used to show convexity of $d$ with respect to shifts, dilation and partial amplitude loss. In this work we make no such claims about the convexity of $\struc{\cdot}$.

\item As a semi-norm, the EMD (like all $W_p$ for $p \in [1,\infty)$) is a degree-one homogeneous functional and satisfies a triangle inequality  (see \cite[p. 94]{villani2008optimal}.  The functional $W^2_2$ has neither property. For example of the latter, let $f = 2\chi_{0,1/2}$, $h = 2\chi_{1/2,1}$ and $g = 2\chi_{1,3/2}$. Then $W^2_2(f,h) = \frac{1}{4}$,  $W^2_2(h,g) = \frac{1}{4}$ but $W^2_2(f,g) = 1$, then
\begin{equation}
    \label{eqn::engquist-triangle-violation}
    W^2_2(f,g) > W^2_2(f,h) + W^2_2(h,g).
\end{equation}
 
\item Redefining $d$ with $W_2$ instead of $W^2_2$ would recover a triangle inequality and degree-one homogeneity.  However, the cost of such a modification would be to increase the sensitivity of $d$ to noise.  Indeed, the scaling in Eq. \ref{eqn::engquist-noise-result} would change from $O(\epsilon_\ell)$ to $O(\epsilon_\ell^{1/2})$, which is significantly slower than the scaling in Thm. \ref{thm::noise-calc}.  

\item Finally, $W_1$ is more  directly analogous to the definition of work used throughout physics, distance times effort. Consider the case when 
\begin{align}
    f(x) = \frac{1}{2}\chi_{[0,2]}(x) \quad g(x) = \frac{1}{2}\chi_{[1,3]}(x)
\end{align}
and the two transport plans
\begin{align}
    \label{eqn::example-1-wasserstein-optimal-transport-plan}
    \pi_1(x_1, x_2) &= \begin{cases} 1/2 \text{ if } x_2 = 1 + x_1  \text{ and } x_1 \in [0,2]\\
    0 \text{ otherwise}
    \end{cases}\\
    \pi_2(x_1, x_2) &= \begin{cases} 1/2 \text{ if } x_2 = 2 + x_1  \text{ and } x_1 \in [0,1]\\
    0 \text{ otherwise }
    \end{cases}
\end{align}
The cost of $\pi_1$ as measured by $W_2$ is twice that of $\pi_2$. Both plans cost the same as measured by $W_1$. In words $W_2$ `prefers' to make many smaller movements as opposed to fewer larger movements, while $W_1$ is agnostic to such differences.
\end{itemize}

The second key difference between the approach in \cite{engquist2013application,engquist2016optimal,yang2018application} and the approach taken here lies in the  definition of $d$ and $\struc{\cdot}$, both of which are used to address the fact that the Wasserstein metric is only defined for non-negative distributions with the same mass. It is worth noting that $d(f,g)$ and $\struc{\cdot}$ could be defined using any Wassterstein metric.  However, $d$ introduces several undesirable artifacts.

\begin{itemize}
 \item The normalization in the definition means that
\begin{align}
   d(\lambda f, \lambda g) = d(f,g), \quad \forall \lambda \in \Rea^+ .
\end{align}
In particular, unlike $\struc{\cdot}$, it is not degree-one homogeneous.

\item Special care is required in the case that $\max(f,0) \equiv 0$ but $\max(g,0) \not \equiv 0$. Indeed one of the reasons that the results in Eq. \ref{eqn::engquist-noise-result} require $f$ and $g$ to be positive and differ only by uniform noise is that small changes is the noise can alter the support of $\max(f,0)$ and $\max(g,0)$. The $\struc{\cdot}$ has no such restrictions on the noise model.

\item The $\struc{\cdot}$ is continuous w.r.t. the $L_1(\Omega)$ norm provided that $\Omega$ is bounded (see Lemma \ref{lem::EMD-bound-by-L1-norm}). $d(f,g)$, however, is not. For example consider, the functions
\begin{align}
    f_\epsilon = \chi_{[\epsilon,1 - \epsilon]} - \epsilon \chi_{(1 - \epsilon,1]}, \quad
    g_\epsilon = - \epsilon \chi_{[0,\epsilon)} + \chi_{[\epsilon,1 - \epsilon]} - \epsilon \chi_{(1 - \epsilon,1]}.
\end{align}
Clearly $f_\epsilon - g_\epsilon \to 0$ in $L_1(\Omega)$ as $\epsilon \to 0$; however, 
\begin{align}
    \lim_{\epsilon \rightarrow 0} d(f_\epsilon,g_\epsilon) \geq \lim_{\epsilon \rightarrow 0} \frac{1}{2}\left (1 + \frac{\epsilon}{4}\right)^2 = \frac{1}{2}.
\end{align}
This lack of continuity due to sign changes is one of the reasons for having restrictions on the noise model for $d(f,g)$. 

\item The kernel of $\struc{\cdot}$ consists of constant functions, and so $\struc{f - g} = 0 \iff f = g + c$ for some constant $c$. This $c$ is easily recovered  by computing the difference between the averages if $f$ and $g$.On the other hand, the kernel of $d$ is
\begin{equation}
\operatorname{Ker}(d) = \left\{
\begin{array}{ll}
 (f,g) \in L^1 \times L^1 : & \max(f,0) = \lambda_+ \max(g,0)\text{ and }  \\
& \max(-f,0) = \lambda_- \max(-g,0) \quad \text{for }\lambda_+, \lambda_- \in \Rea^+
\end{array}
\right\}
\end{equation}
It is more difficult to account for such a kernel.
\end{itemize}

\section{Numerical Results} \label{sec::numerical-results}

In this section we present the results of several numerical experiments.  We make two simplying assumptions.  First, we let $X$ and $Y$ be two dimensional domains. This choice is motivated by ease of visualization as well as the availability of code to quickly compute the $\EMD$ in two dimensions. We, however, believe that our results generalize well to high dimensional problems.  Second, we assume that $L_\theta$ is linear in $\theta$.   This choice is for simplicity, but it also is a reasonable approximation for finding a local optimum.  Indeed, if $L_\theta$ smoothly depends on $\theta$, then $L$ is locally linear:
\begin{align}
\label{eqn::linearized-forward-op}
L_{\hat{\theta} + \delta\theta} = L_{\hat{\theta}} + \nabla_\theta L({\hat{\theta}})\cdot \delta\theta  + O(\delta\theta^2).
\end{align}

For each experiment, we provide with a known signal $u$ and a family of operators $\{L_\theta\}_{\theta \in \Theta}$.  
We then set $L = L_{\hat \theta}$ for some $\hat \theta \in \Theta$, generate a measurement $b = L_{\hat \theta}u$, and examine the behavior of $\struc{r_{\theta,\eta}}$ as a function of $\theta$. The expectation is that 
\begin{equation}
    \hat \theta \approx \theta^* \defeq \argmin_{\theta\in \Theta} \struc{r_{\theta,\eta}} .
\end{equation}

The first two experiments show that indeed $\theta^* \approx \hat \theta$ even with relatively high noise. The final experiment illustrates that the method performs better as the problem becomes more overdetermined. We report a figure of merit, the contrast, defined as:
\begin{equation}
    \label{eqn::contrast-definition}
    \contrast(F) = \frac{\max(F) - \min(F)}{\max(F) + \min(F)}
\end{equation}
for any $F \colon \Theta \rightarrow \Rea^+$ that is not identically zero. 
The contrast measures the depth of a minimum, and the greater the contrast, the less the location of the minimum changes in the presence of additive noise in $F$. In all three experiments we compare the contrast of $\struc{\cdot}$ with the discrete norms $\norm{\cdot}_1$ and $\norm{\cdot}_2$.  For any  $z \in \mathcal{B}_{\Delta y}$ these norms are given by, 
\begin{align}
    \norm{z}_1 = \Delta y^2 \sum_{i_1,i_2} |z_{i_1,i_2}|
    \quad \text{and} \quad
    \norm{z}_2 = \Delta y \left (\sum_{i_1,i_2} z_{i_1,i_2}^2\right )^{1/2}
\end{align}
We also generate plots of all three (semi-) norms as a function of the parameter $\theta$.

\subsection{Implementation Details}

The implementation of each of these experiments involves four basic steps: (i) the generation of the random forward operators $L_\theta$; (ii) generation of the signal $u$, measurement $b$ and noise $\eta$; (iii) calculation of $\tilde u_{\theta,\eta}$; and (iv) computation of the $\struc{\cdot}$. The specific values of parameters needed to recreate our results are given in Table \ref{table::expriment-parameters}.

\begin{table}
    \centering
    \begin{tabular}{l l|l l l|l l l}
         Parameter & Value & Parameter & Value & Ref. & Parameter & Value & Ref.\\\hline\hline
         \multicolumn{2}{l}{\textbf{Discretization}\footnotemark}& \multicolumn{3}{l}{\textbf{Inversion}}&\multicolumn{3}{l}{$\struc{\cdot}$}\\\hline
         $\Delta x$ & 1/64& $\Phi(\cdot,\lambda)$  &$\lambda \TV(u)$   & \cite{rudin1992nonlinear}& Max Iter       & 8000  & \cite{li2017parallel}\\
         $\Delta y$& 1/100 & $\lambda$              & 10                & \cite{rudin1992nonlinear}& $\EMD_\mu$     & 7e-6  & \cite{li2017parallel}\\
         && $\mu$                  & 100               & \cite{goldstein2009split}& $\EMD_\tau$    & 3     & \cite{li2017parallel}\\
         && Bregman Iterations     & 10                & \cite{goldstein2009split}&                &       &\\
    \end{tabular}
    \caption{Numerical parameters for Experiments 1 - 3.}
    \label{table::expriment-parameters}
\end{table}
\footnotetext{$\Delta x$ and $\Delta y$ both change for Experiment 3, however the other parameters are fixed.}
\begin{enumerate}
    \item {\bf Generation of the random forward operators.} Recall the definitions in Section \ref{sec::background::i-p-background}. A forward operator $L_\theta$, even an academic one, but rather a the discretization of an operator $\mathcal{L} \colon \mathcal{U} \rightarrow \mathcal{B}.$ In applications, $L_\theta$ models the action of some physical process which produces a measurement. For example in seismic imaging the forward operator is the propagation of a seismic wave \cite{engquist2013application}, and in plasma imaging in tokamaks the forward operator couples the optics of the camera with the symmetries of the plasma \cite{wingen2015regularization}.
    
    For our experiments, we presume that $\mathcal{L}$ is a Line Integral Operator (LIO). (See Appendix \ref{sec::LIO} for details.) If $f \colon X \rightarrow \Rea$ and $g \colon Y \rightarrow \Rea,$ then for each $y \in Y,$ $g(y)$ represents the integral of $f$ over some path $p(y).$ Some examples of common LIO are the Radon, Abel and Helical Abel transforms \cite{schneider2012tomographic}.
    
    \item {\bf Generation of the signal, measurement and noise.} The underlying signal $u \in \mathcal{U}_{\Delta x}$ is a series of concentric rings (see Fig. \ref{fig::exp-1::ground-truth}). Then we apply $L_{\hat\theta}$ to $u$ to obtain a noiseless measurement $b \in \mathcal{B}_{\Delta y}$ (see Fig. \ref{fig::exp-1::true-emission}). The noisy signal (see Fig. \ref{fig::exp-1::given-emission}) is generated by adding independent white noise $\eta$ with mean zero and variance $\sigma$ to each element of $b$ so that
    \begin{equation}
        \label{eqn::SNR-definition}
        \SNR = \frac{\norm{b}_2}{\norm{\eta}_2}
    \end{equation}
    is at a specified level.
    
    \begin{figure}[htbp]
        \centering
        \begin{subfigure}[b]{.31\linewidth}
            \centering
            \includegraphics[width=\textwidth,keepaspectratio]{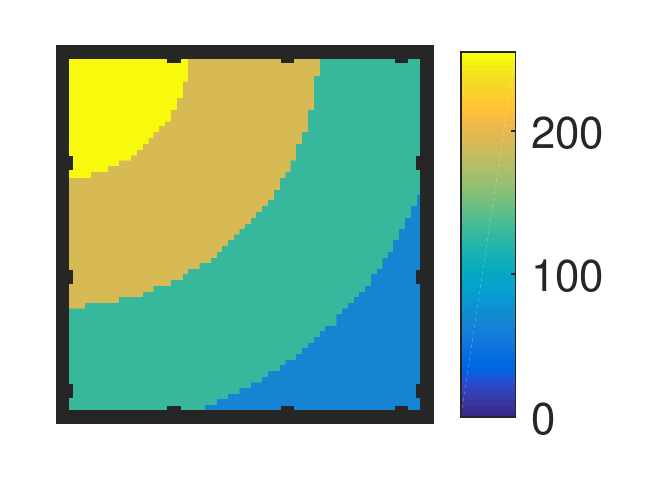}
            \subcaption{$u$.}
            \label{fig::exp-1::ground-truth}
        \end{subfigure}
        \begin{subfigure}[b]{.31\linewidth}
            \centering
            \includegraphics[width=\textwidth,keepaspectratio]{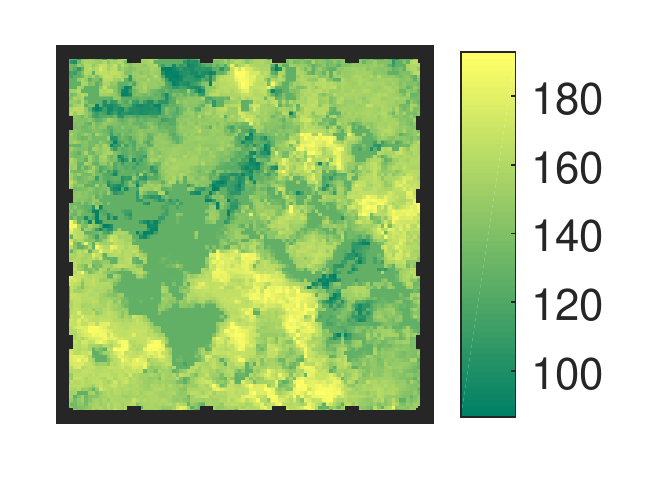}
            \subcaption{$b = L_{\hat \theta}u$.}
            \label{fig::exp-1::true-emission}
        \end{subfigure}
        \begin{subfigure}[b]{.31\linewidth}
            \centering
            \includegraphics[width=\textwidth,keepaspectratio]{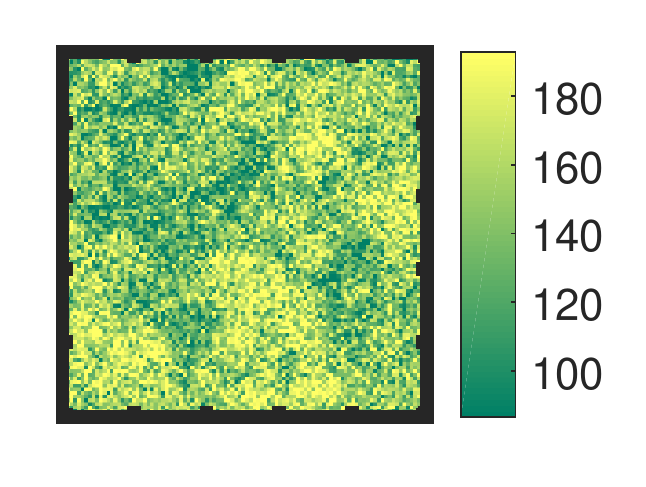}
            \subcaption{$b + \eta$.}
            \label{fig::exp-1::given-emission}
        \end{subfigure}
        \caption{The signal $u$, measurement $b$, and noisy measure $b + \eta$ for Experiment 1.}
        \label{fig::exp-1::measurement}
    \end{figure}
    
    \item {\bf Computation of $\tilde u_{\theta, \eta}$.} Throughout these experiments, we use the inversion procedure of the form of Eq. \ref{eqn::linear-regularized-inverse-problem} with $\Phi(v;\lambda) = \lambda \norm{\bC v}_1$ where $\bC$ is a one-sided discrete approximation of the gradient operator:
    \begin{align}
        (C v)_{2i,j} &= \frac{1}{dx}\left(v_{i,j} - v_{\ell-1,j} \right) \nonumber \\
        (C v)_{2i+1,j} &= \frac{1}{dy}\left(v_{i,j} - v_{i,j-1} \right)
    \end{align}
    where $v_{i,j}$ is the $i$'th x and $j$'th y component of the vector $\bv$, and likewise for $(\bC \bv)_{i,j}$.This is TV regularization and has found wide success within image processing, especially when the underlying signal to be recovered is piecewise constant \cite{goldstein2009split,rudin1992nonlinear}.
    
    To solve the resulting non-linear variational problem, we use the Split-Bregman algorithm, specifically the Generalized Split-Bregman Algorithm (GSBA) of \cite{goldstein2009split}, which requires specification of a step size parameter $\mu$ (called $\lambda$ in \cite{goldstein2009split}). GSBA requires the repeated solution of the linaer system $(\bL^T\bL + \lambda^2 \bC^T\bC)x = y$. The matrix $(\bL^T\bL + \lambda^2 \bC^T\bC)$ is sparse and so we solve it using the L-BFGS \cite{Becker2012lbfgs,zhu1994lbfgs} method (limited memory Broyden-Fletcher-Goldfarb-Shanno\cite{broyden1970convergence,fletcher1970new,goldfarb1970family,shanno1970conditioning}).
    
    \item {\bf Computation of the $\struc{\cdot}$.} Computing $\struc{\cdot}$ requires computing $\EMD$. The algorithm that we use is given in \cite{li2016fast,li2017parallel,ryu2018transport}.
\end{enumerate}

\subsection{Experiment 1}

This experiment is based on a normalized Eq. \ref{eqn::linearized-forward-op} where $p = 1$. Let $L_0$ and $L_1$ be two operators generated as described in Appendix \ref{sec::LIO}. We define $\theta \in [0,1]$ and 
\begin{equation}
    L_\theta = (1 - \theta)L_0 + \theta L_1.
\end{equation}
Fig. \ref{fig::residual-vs-alpha} is a plot of the residual for different values of $\theta$. In Fig. \ref{fig::residual-when-alpha-is-0.04}, $\theta = 0.04$, and in Fig. \ref{fig::residual-when-alpha-is-0.48} $\theta = 0.48$. Upon close inspection, one can see that from Fig. \ref{fig::residual-when-alpha-is-0.04} that when $\theta$ is small the residual visually looks like white noise, whereas from Fig. \ref{fig::residual-when-alpha-is-0.48} when $\theta$ is large the residual has underlying structure in addition to the noise. It is, however, difficult to see. Despite these two plots appearing similar they have very different structures, $\struc{r_{0.04,\eta}} \approx 0.06$ and $\struc{r_{0.48,\eta}} \approx 0.54$. The structure is also evident by looking at Figs. \ref{fig::flow-when-alpha-is-0.04}, \ref{fig::flow-when-alpha-is-0.48}, which are $m$ from Eq. \ref{eqn::benier-benamou}. Note that when $\theta = 0.04,$ $m$ is higgledy-piggledy, whereas when $\theta = 0.48,$ $m$ appears much more orderly.

\begin{figure}[t]
    \centering
    \begin{subfigure}{.25\linewidth}
        \centering
        \includegraphics[width=\textwidth]{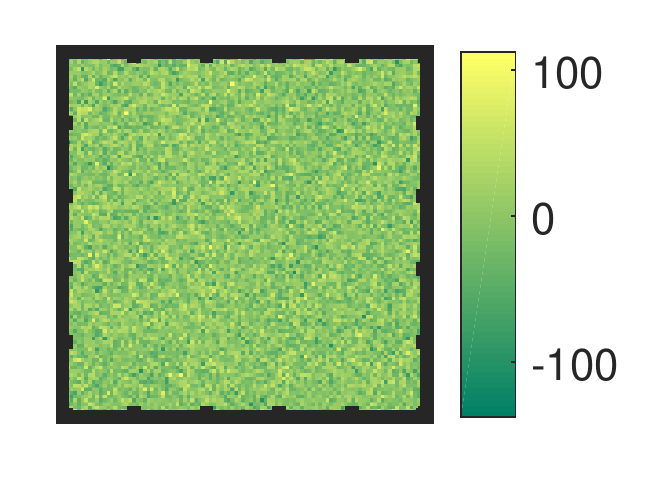}
        \subcaption{$r_{0.04,\eta}$}
        \label{fig::residual-when-alpha-is-0.04}
    \end{subfigure}
    \begin{subfigure}{.23\linewidth}
        \centering
        \includegraphics[width=\textwidth]{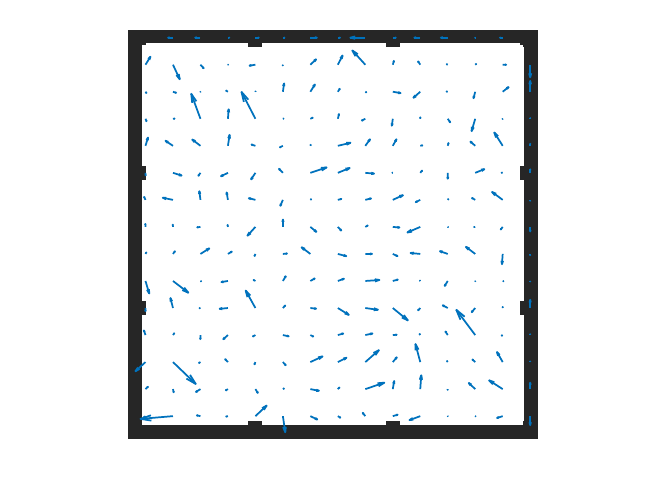}
        \subcaption{$\hat{m}_{0.04}$}
        \label{fig::flow-when-alpha-is-0.04}
    \end{subfigure}
    \begin{subfigure}{.25\linewidth}
        \centering
        \includegraphics[width=\textwidth]{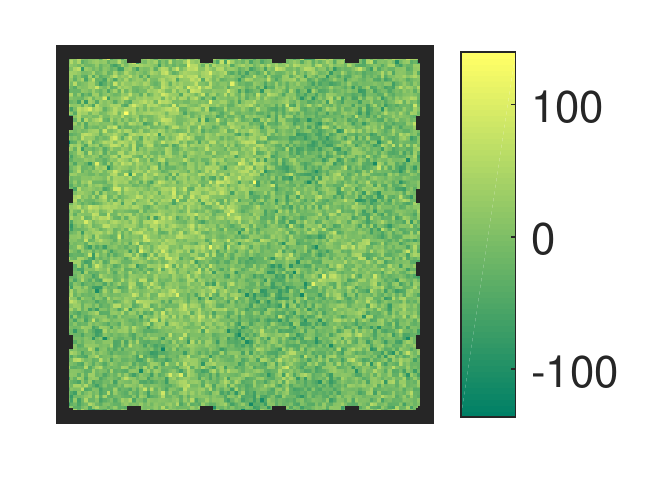}
        \subcaption{$r_{0.48,\eta}$}
        \label{fig::residual-when-alpha-is-0.48}
    \end{subfigure}
    \begin{subfigure}{.23\linewidth}
        \centering
        \includegraphics[width=\textwidth]{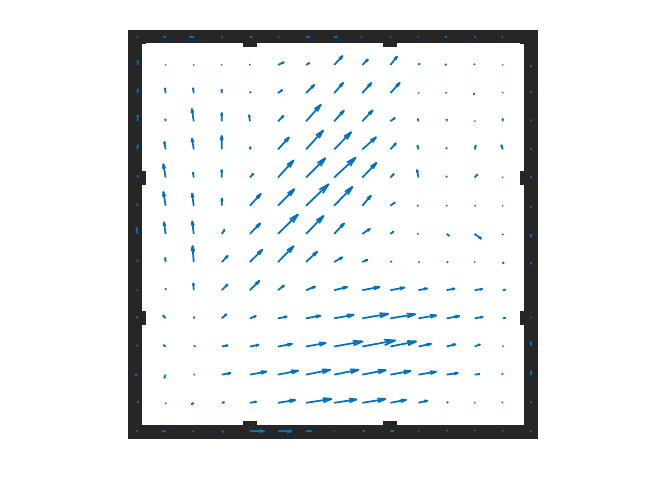}
        \subcaption{$\hat{m}_{0.48}$}
        \label{fig::flow-when-alpha-is-0.48}
    \end{subfigure}
    \caption{Results from Experiment 1. The residual and flow $\hat{m}_\theta$ that minimizes Eq. \ref{eqn::benier-benamou} for a given $\theta$. In Figs. \ref{fig::flow-when-alpha-is-0.04} and \ref{fig::flow-when-alpha-is-0.48}, the orientation of the arrows represents the direction $\hat{m}_\theta$, and the length of the arrows is proportional to the magnitude. 
}
    \label{fig::residual-vs-alpha}
\end{figure}

A plot of $\struc{r_{\theta,\eta}}$ vs $\theta$ is given in Fig. \ref{fig::exp-1}. Clearly, $\struc{r_{\theta,\eta}}$ is minimized when $\theta \approx 0$. Further,  we note that $\struc{r_{\theta,\eta}}$ is increasing as a function of $\theta$ when $\theta \in [0,0.5],$ however then decreases. This is expected behavior around the minimum, however the problem is evidently not convex away from $\hat \theta$. This is important to keep in mind for future work.

\begin{figure}[htbp]
    \centering
    \begin{subfigure}[b]{.31\linewidth}
        \centering
        \includegraphics[width=\textwidth,keepaspectratio]{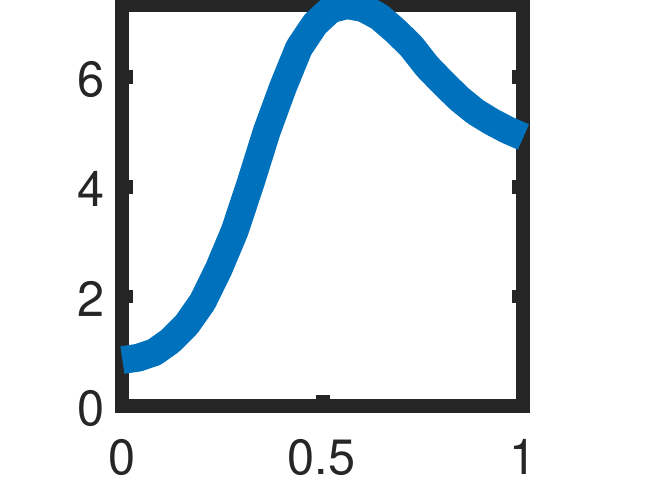}
        \label{fig::exp-1::struc-vs-alpha}
        \subcaption{$\struc{r_{\theta,\eta}}$ vs $\theta$.}
    \end{subfigure}
    \begin{subfigure}[b]{.31\linewidth}
        \centering
        \includegraphics[width=\textwidth,keepaspectratio]{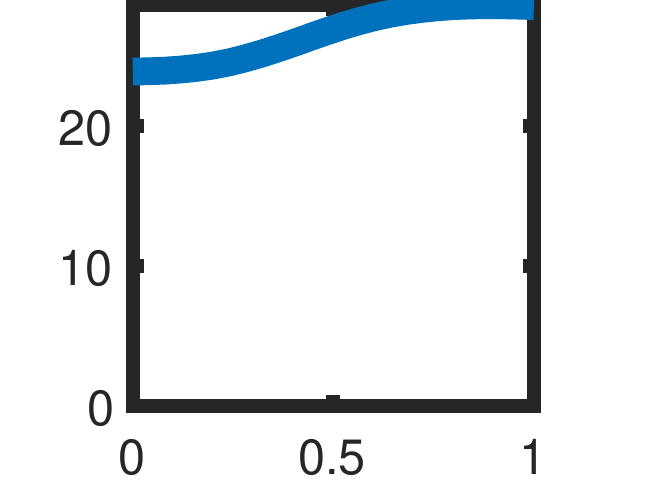}
        \label{fig::exp-1::L-1-vs-alpha}
        \subcaption{$\norm{r_{\theta,\eta}}_1$ vs $\theta$.}
    \end{subfigure}
    \begin{subfigure}[b]{.31\linewidth}
        \centering
        \includegraphics[width=\textwidth,keepaspectratio]{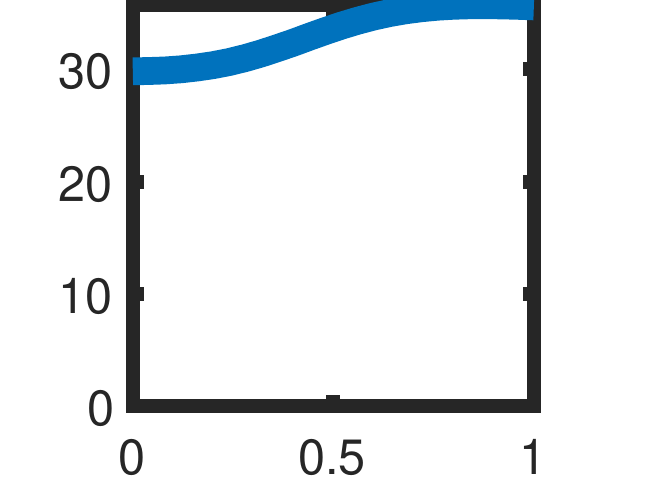}
        \label{fig::exp-1::L-2-vs-alpha}
        \subcaption{$\norm{r_{\theta,\eta}}_2$ vs $\theta$.}
    \end{subfigure}
    \caption{Results from Experiment 1. The value of $r_{\theta,\eta}$ as measured by $\struc{\cdot},$ $\norm{\cdot}_1$ and $\norm{\cdot}_2$. In all examples the minimum occurs when $\theta = 0$ however the contrast is greatest for $\struc{\cdot}$.}
    \label{fig::exp-1}
\end{figure}
\subsection{Experiment 2}
Experiment 2 is also based on a normalized Eq. \ref{eqn::linearized-forward-op}, however in this case $p = 2$ and $\hat \theta = \left (\frac{1}{2}, \frac{1}{2}\right )$.
The true signal used in Experiment 2 is the same as in Experiment 1 (see Fig. \ref{fig::exp-1::ground-truth}). This experiment studies the change in the contrast for $\struc{\cdot}, \norm{\cdot}_1$ and $\norm{\cdot}_2$ as the $\SNR$ decreases. The results are summarized in Table \ref{table::exp-2::results}.

\begin{figure}[ht!]
    \centering
    \begin{subfigure}[b]{.32\linewidth}
        \centering
        \includegraphics[width=\textwidth]{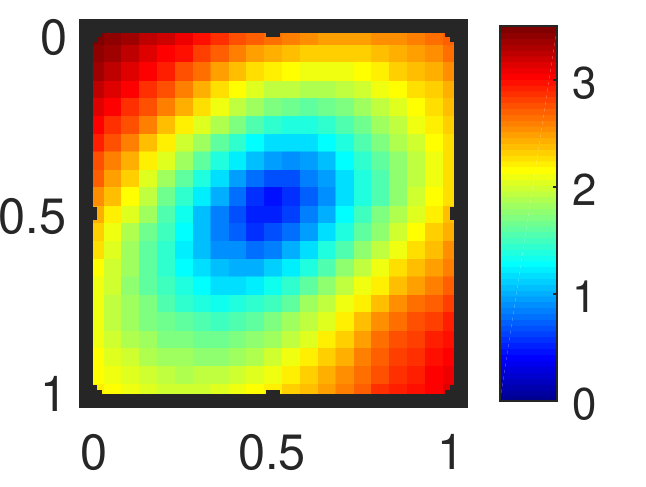}
        \subcaption{$\struc{r_{\theta,\eta}}$ vs $\theta$}
        \label{fig::exp-2::SNR-25::struc}
    \end{subfigure}
    \begin{subfigure}[b]{.32\linewidth}
        \centering
        \includegraphics[width=\textwidth]{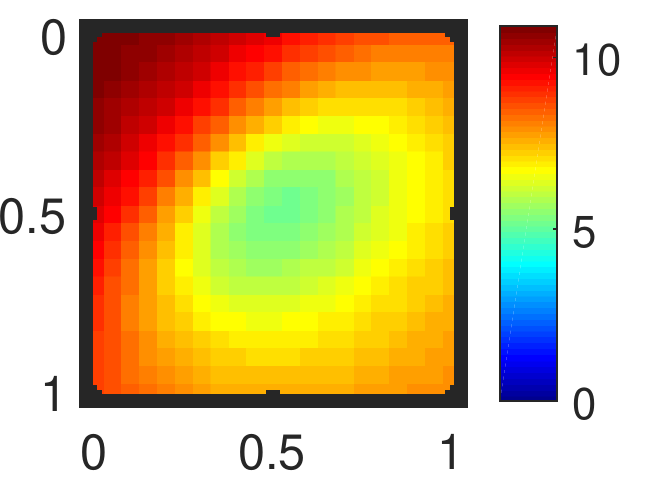}
        \subcaption{$\norm{r_{\theta,\eta}}_1$ vs $\theta$}
        \label{fig::exp-2::SNR-25::L-1}
    \end{subfigure}
    \begin{subfigure}[b]{.32\linewidth}
        \centering
        \includegraphics[width=\textwidth]{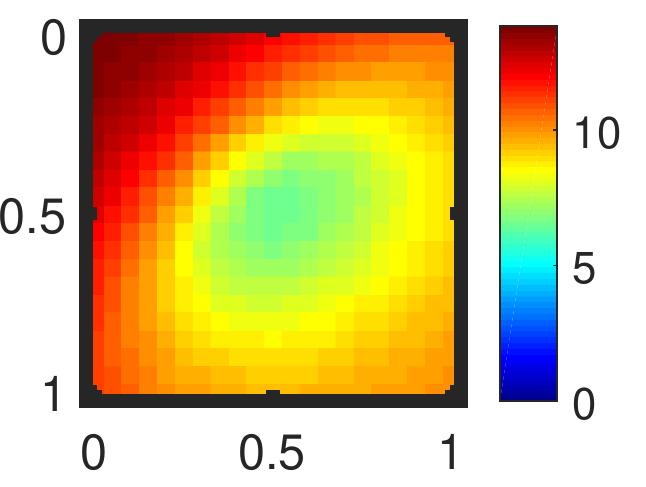}
        \subcaption{$\norm{r_{\theta,\eta}}_2$ vs $\theta$}
        \label{fig::exp-2::SNR-25::L-2}
    \end{subfigure}
    \caption{Results from Experiment 2. In these plots $\SNR = 25$.}
    \label{fig::exp-2::SNR-25}
\end{figure}

\begin{figure}[ht!]
    \centering
    \begin{subfigure}[b]{.31\linewidth}
        \centering
        \includegraphics[width=\textwidth,height=\textheight,keepaspectratio]{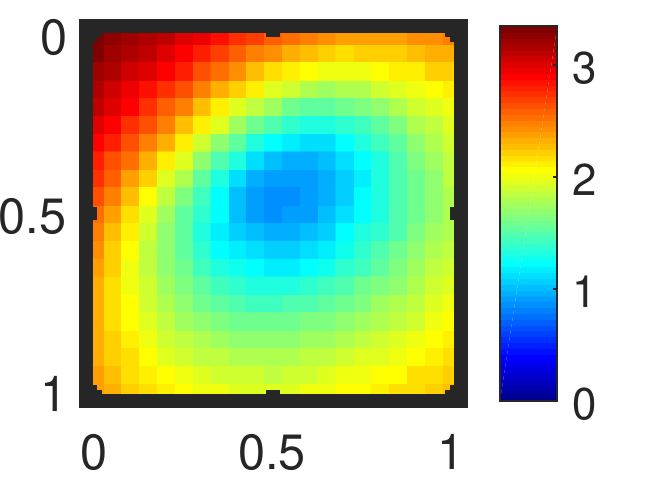}
        \subcaption{$\struc{r_{\theta,\eta}}$ vs $\theta$}
        \label{fig::exp-2::SNR-5::struc}
    \end{subfigure}
    \begin{subfigure}[b]{.31\linewidth}
        \centering
        \includegraphics[width=\textwidth,height=\textheight,keepaspectratio]{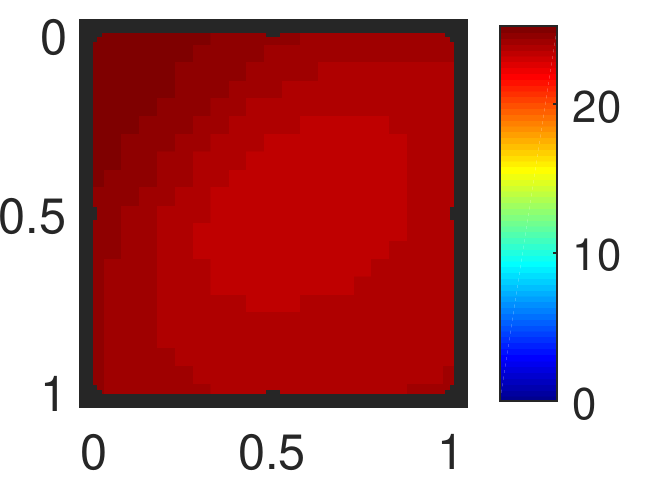}
        \subcaption{$\norm{r_{\theta,\eta}}_1$ vs $\theta$}
        \label{fig::exp-2::SNR-5::L-1}
    \end{subfigure}
    \begin{subfigure}[b]{.31\linewidth}
        \centering
        \includegraphics[width=\textwidth,height=\textheight,keepaspectratio]{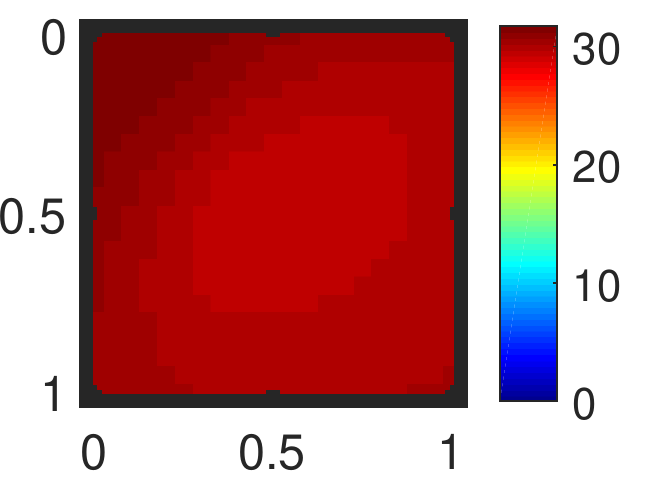}
        \subcaption{$\norm{r_{\theta,\eta}}_2$ vs $\theta$}
        \label{fig::exp-2::SNR-5::L-2}
    \end{subfigure}
    \caption{Results from Experiment 2. In these plots $\SNR = 5$.}
    \label{fig::exp-2::SNR-5}
\end{figure}

\begin{table}[htb]
    \centering
    \begin{tabular}{c||c|c|c}
         Contrast&$\struc{r_{\theta,\eta}}$&$\norm{r_{\theta,\eta}}_1$&$\norm{r_{\theta,\eta}}_2$ \\\hline \hline
         $\SNR = 25$&0.7547&0.3493&0.3544\\ \hline
         $\SNR = 5$ &0.5917&0.0398&0.0404
    \end{tabular}
    \caption{Results from Experiment 2. The contrast for different choices of (semi)norms. Larger is better.}
    \label{table::exp-2::results}
\end{table}

In all cases, the contrast of $\struc{\cdot}$ is greatest, and the contrast of $\struc{\cdot}$ relative to $\norm{\cdot}_1$ of $\norm{\cdot}_2$ increases as the problem becomes more noisy. This suggests that $\struc{\cdot}$ is a more robust choice of semi-norm for measuring the level of miscalibration of $L_\theta$, especially when noise levels are high.

\subsection{Experiment 3}

The final experiment examines the necessity of the overdetermined assumption of $L_\theta$. We repeat the setup of Experiment 2; however we fix the $\SNR = 25$ and instead adjust $\Delta y$ so that $L_\theta\colon \mathcal{U}_{\Delta x} \rightarrow \mathcal{B}_{\Delta y}$ becomes a square operator. We start with a fixed reference $\Delta y_0$, and consider
\begin{align}
    \label{eqn::exp-3::subsampling}
    \mathcal{B}_{\Delta y_0} \cong \Rea^{100\times100} \quad \mathcal{B}_{4/3\Delta y_0} \cong \Rea^{75\times75} \quad \mathcal{B}_{2\Delta y_0} \cong \Rea^{50\times50} \quad \mathcal{B}_{4\Delta y_0} \cong \Rea^{25\times25}.
\end{align}
In all cases, $\mathcal{U}_\Delta x \cong \Rea^{25 \times 25}$ is fixed. Each of the $\mathcal{B}$ in Eq. \ref{eqn::exp-3::subsampling} are plotted in Fig. \ref{fig::exp-3::signals}. The values of
\begin{align}
    \theta^s = \argmin_{\theta \in \Theta} \struc{r_{\theta,\eta}} \quad \theta^1 = \argmin_{\theta \in \Theta} \norm{r_{\theta,\eta}}_1 \quad \theta^2 = \argmin_{\theta \in \Theta} \norm{r_{\theta,\eta}}_2
\end{align}
as well as the contrast are recorded in Table \ref{table::exp-3::results}. Finally, plots of $\struc{r_{\theta,\eta}}$, $\norm{r_{\theta,\eta}}_1$, and $\norm{r_{\theta,\eta}}_2$ vs $\theta$ as $\Delta y$ increases are Figs. \ref{fig::exp-3::100x100}, \ref{fig::exp-3::75x75}, \ref{fig::exp-3::50x50}, and \ref{fig::exp-3::25x25}.

\begin{figure}[ht!]
    \centering
    \begin{subfigure}[b]{.23\linewidth}
        \centering
        \includegraphics[width=\textwidth,height=\textheight,keepaspectratio]{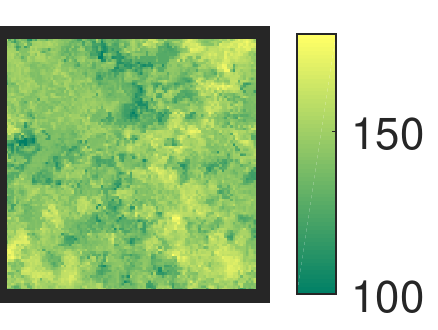}
        \subcaption{$b \in \Rea^{100\times 100}$}
        \label{fig::exp-3::100x100::signal}
    \end{subfigure}
    \begin{subfigure}[b]{.23\linewidth}
        \centering
        \includegraphics[width=\textwidth,height=\textheight,keepaspectratio]{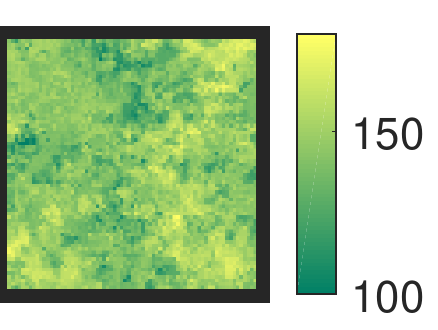}
        \subcaption{$b \in \Rea^{75\times 75}$}
        \label{fig::exp-3::75x75::signal}
    \end{subfigure}
    \begin{subfigure}[b]{.23\linewidth}
        \centering
        \includegraphics[width=\textwidth,height=\textheight,keepaspectratio]{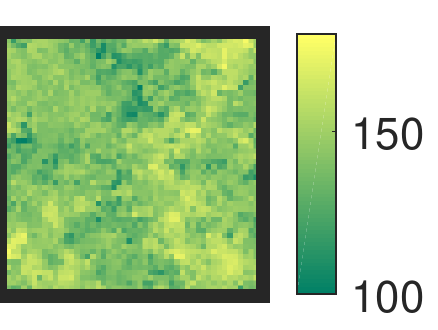}
        \subcaption{$b \in \Rea^{50\times 50}$}
        \label{fig::exp-3::50x50::signal}
    \end{subfigure}
    \begin{subfigure}[b]{.23\linewidth}
        \centering
        \includegraphics[width=\textwidth,height=\textheight,keepaspectratio]{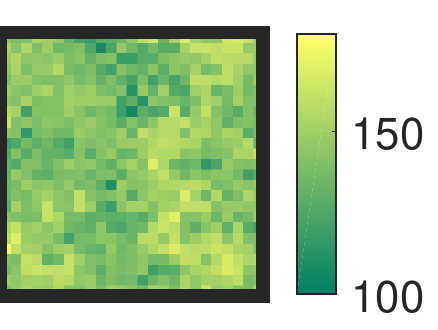}
        \subcaption{$b \in \Rea^{25\times 25}$}
        \label{fig::exp-3::25x25::signal}
    \end{subfigure}
    \caption{Results from Experiment 3. Plot of $b$ for various choices of $\Delta y$ (see Eq. \ref{eqn::exp-3::subsampling}).}
    \label{fig::exp-3::signals}
\end{figure}

\begin{figure}[ht!]
    \centering
    \begin{subfigure}[b]{.31\linewidth}
        \centering
        \includegraphics[width=\textwidth,height=\textheight,keepaspectratio]{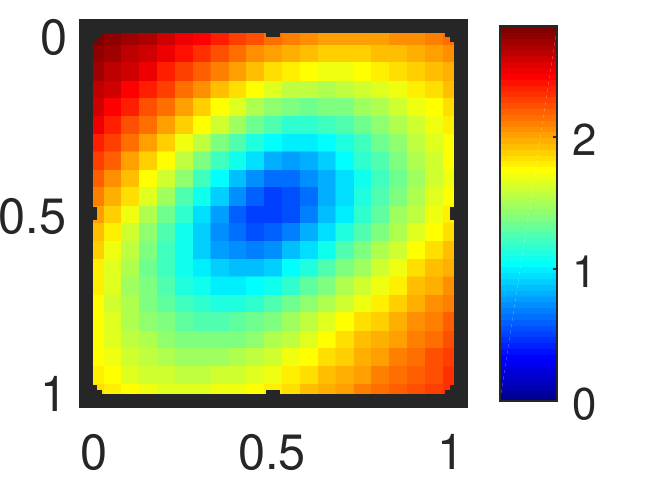}
        \subcaption{$\struc{r_{\theta,\eta}}$ vs $\theta$}
        \label{fig::exp-3::100x100::struc}
    \end{subfigure}
    \begin{subfigure}[b]{.31\linewidth}
        \centering
        \includegraphics[width=\textwidth,height=\textheight,keepaspectratio]{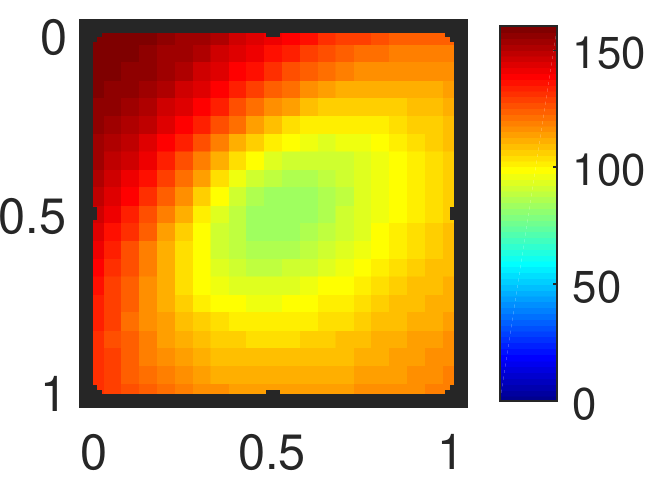}
        \subcaption{$\norm{r_{\theta,\eta}}_1$ vs $\theta$}
        \label{fig::exp-3::100x100::L-1}
    \end{subfigure}
    \begin{subfigure}[b]{.31\linewidth}
        \centering
        \includegraphics[width=\textwidth,height=\textheight,keepaspectratio]{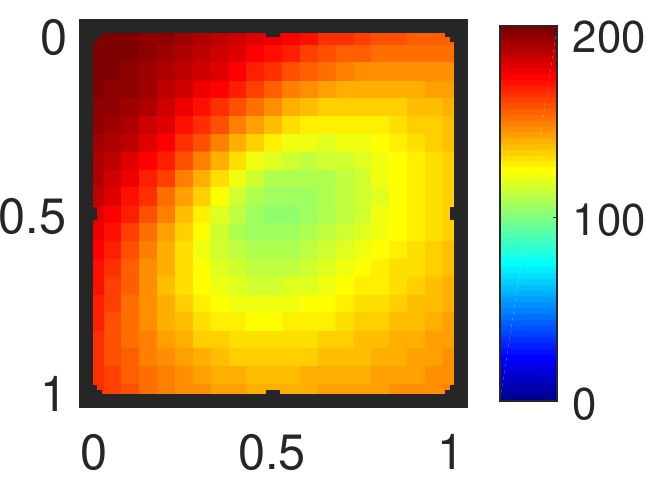}
        \subcaption{$\norm{r_{\theta,\eta}}_2$ vs $\theta$}
        \label{fig::exp-3::100x100::L-2}
    \end{subfigure}
    \caption{Results from Experiment 3. In these plots $L\colon \Rea^{25\times 25} \rightarrow \Rea^{100\times 100}$. See Table \ref{table::exp-3::results} for the contrast, $\theta^s$, $\theta^1$, and $\theta^2$.}
    \label{fig::exp-3::100x100}
\end{figure}

\begin{figure}[ht!]
    \centering
    \begin{subfigure}[b]{.31\linewidth}
        \centering
        \includegraphics[width=\textwidth,height=\textheight,keepaspectratio]{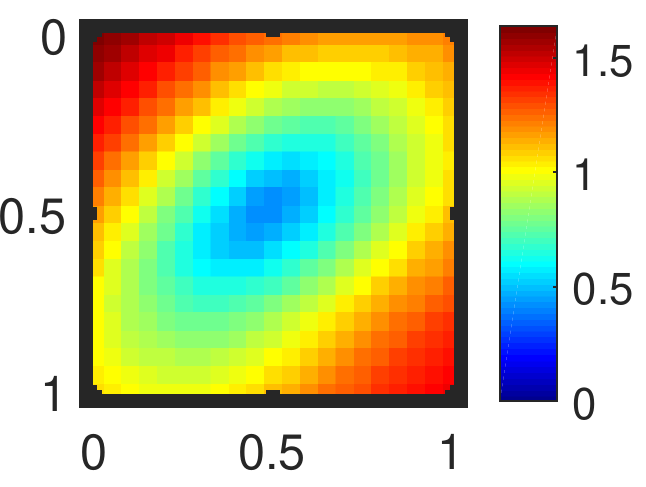}
        \subcaption{$\struc{r_{\theta,\eta}}$ vs $\theta$}
        \label{fig::exp-3::75x75::struc}
    \end{subfigure}
    \begin{subfigure}[b]{.31\linewidth}
        \centering
        \includegraphics[width=\textwidth,height=\textheight,keepaspectratio]{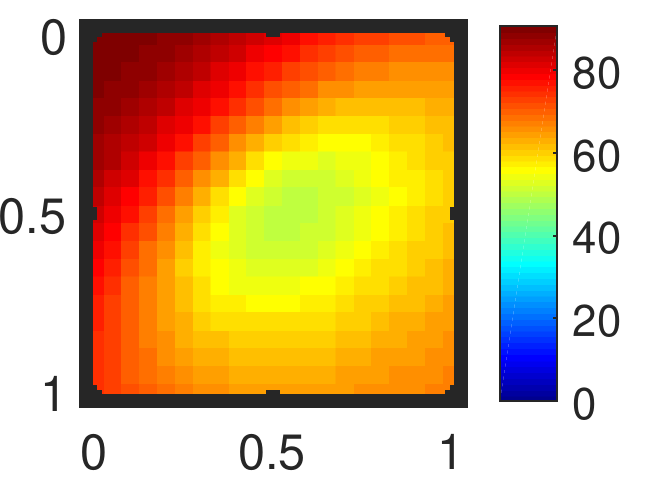}
        \subcaption{$\norm{r_{\theta,\eta}}_1$ vs $\theta$}
        \label{fig::exp-3::75x75::L-1}
    \end{subfigure}
    \begin{subfigure}[b]{.31\linewidth}
        \centering
        \includegraphics[width=\textwidth,height=\textheight,keepaspectratio]{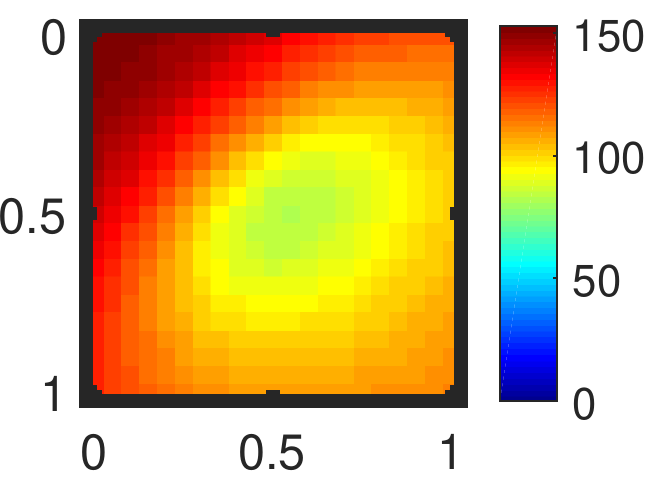}
        \subcaption{$\norm{r_{\theta,\eta}}_2$ vs $\theta$}
            \label{fig::exp-3::75x75::L-2}
    \end{subfigure}
    \caption{Results from Experiment 3. In these plots $L\colon \Rea^{25\times 25} \rightarrow \Rea^{75\times 75}$. See Table \ref{table::exp-3::results} for the contrast, $\theta^s$, $\theta^1$, and $\theta^2$.}
    \label{fig::exp-3::75x75}
\end{figure}

\begin{figure}[ht!]
    \centering
    \begin{subfigure}[b]{.31\linewidth}
        \centering
        \includegraphics[width=\textwidth,height=\textheight,keepaspectratio]{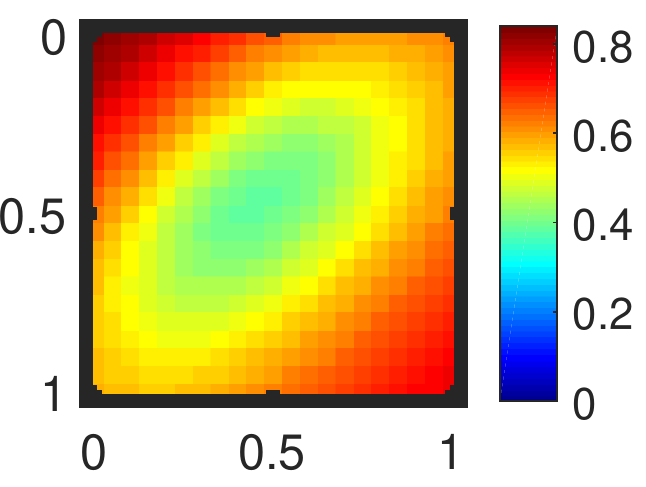}
        \subcaption{$\struc{r_{\theta,\eta}}$ vs $\theta$}
        \label{fig::exp-3::50x50::struc}
    \end{subfigure}
    \begin{subfigure}[b]{.31\linewidth}
        \centering
        \includegraphics[width=\textwidth,height=\textheight,keepaspectratio]{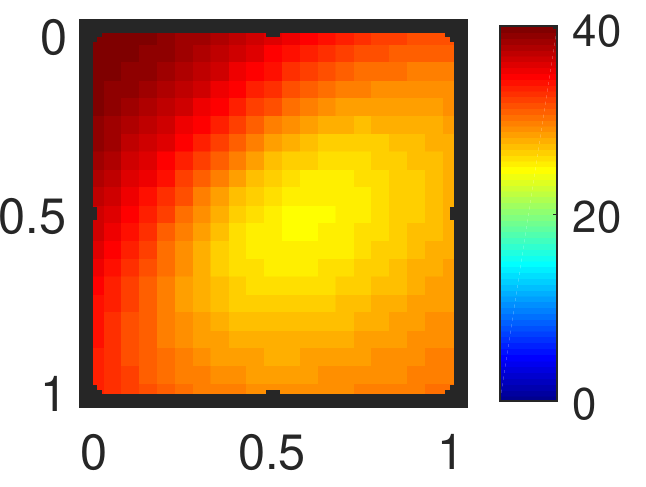}
        \subcaption{$\norm{r_{\theta,\eta}}_1$ vs $\theta$}
        \label{fig::exp-3::50x50::L-1}
    \end{subfigure}
    \begin{subfigure}[b]{.31\linewidth}
        \centering
        \includegraphics[width=\textwidth,height=\textheight,keepaspectratio]{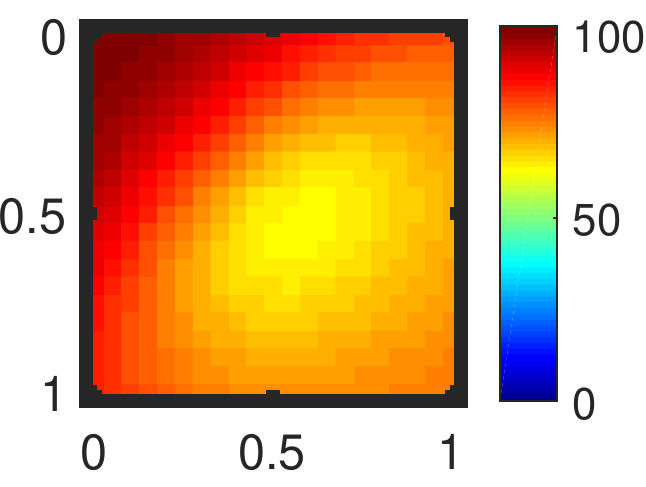}
        \subcaption{$\norm{r_{\theta,\eta}}_2$ vs $\theta$}
        \label{fig::exp-3::50x50::L-2}
    \end{subfigure}
    \caption{Results from Experiment 3. In these plots $L\colon \Rea^{25\times 25} \rightarrow \Rea^{50\times 50}$. See Table \ref{table::exp-3::results} for the contrast, $\theta^s$, $\theta^1$, and $\theta^2$.}
    \label{fig::exp-3::50x50}
\end{figure}

\begin{figure}[ht!]
    \centering
    \begin{subfigure}[b]{.31\linewidth}
        \centering
        \includegraphics[width=\textwidth,height=\textheight,keepaspectratio]{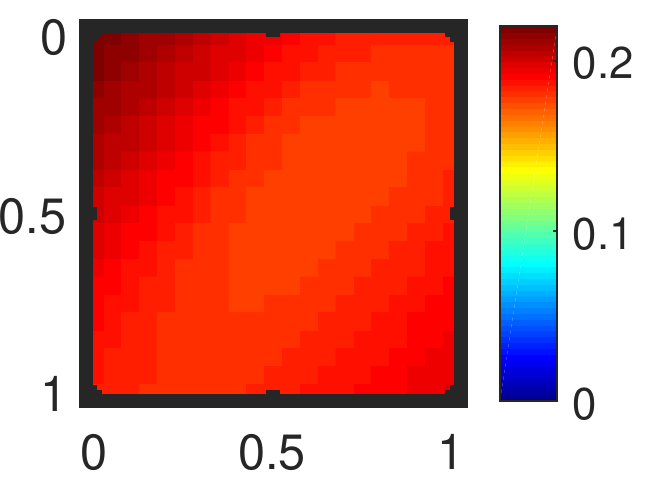}
        \subcaption{$\struc{r_{\theta,\eta}}$ vs $\theta$}
        \label{fig::exp-3::25x25::struc}
    \end{subfigure}
    \begin{subfigure}[b]{.31\linewidth}
        \centering
        \includegraphics[width=\textwidth,height=\textheight,keepaspectratio]{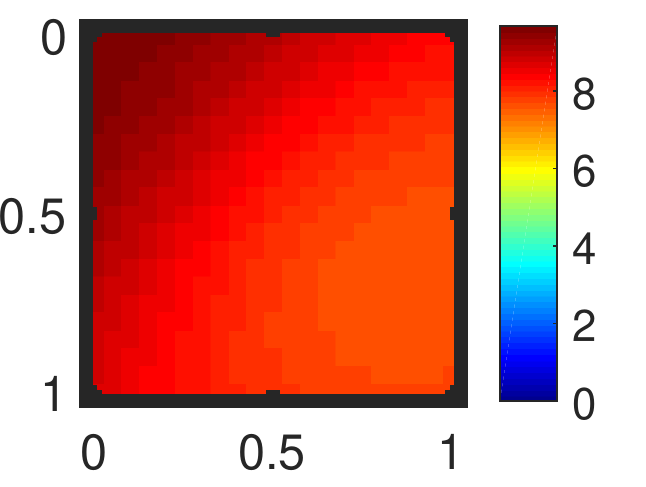}
        \subcaption{$\norm{r_{\theta,\eta}}_1$ vs $\theta$}
        \label{fig::exp-3::25x25::L-1}
    \end{subfigure}
    \begin{subfigure}[b]{.31\linewidth}
        \centering
        \includegraphics[width=\textwidth,height=\textheight,keepaspectratio]{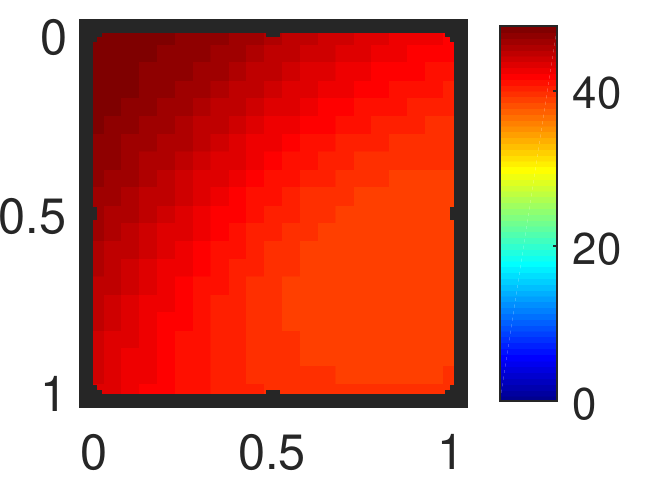}
        \subcaption{$\norm{r_{\theta,\eta}}_2$ vs $\theta$}
        \label{fig::exp-3::25x25::L-2}
    \end{subfigure}
    \caption{Results from Experiment 3. In these plots $L\colon \Rea^{25\times 25} \rightarrow \Rea^{25\times 25}$. See Table \ref{table::exp-3::results} for the contrast, $\theta^s$, $\theta^1$, and $\theta^2$.}
    \label{fig::exp-3::25x25}
\end{figure}

\begin{table}[htb]
    \centering
    \begin{tabular}{c||c|c|c}
         &$\theta^s$&$\theta^1$&$\theta^2$ \\\hline \hline
         $\bb \in \Rea^{100\times100}$&(0.55,0.55)&(0.55,0.55)&(0.55,0.55)\\ \hline
         $\bb \in \Rea^{75\times75}$&(0.55,0.55)&(0.55,0.60)&(0.55,0.60)\\ \hline
         $\bb \in \Rea^{50\times50}$&(0.55,0.50)&(0.55,0.65)&(0.60,0.60)\\ \hline
         $\bb \in \Rea^{25\times25}$&(0.45,0.70)&(0.75,0.90)&(0.70,0.90)
    \end{tabular}
    \begin{tabular}{c||c|c|c}
         Contrast&$\struc{r_{\theta,\eta}}$&$\norm{r_{\theta,\eta}}_1$&$\norm{r_{\theta,\eta}}_2$ \\\hline \hline
         $\bb \in \Rea^{100\times100}$&0.7044&0.3155&0.3215\\ \hline
         $\bb \in \Rea^{75\times75}$&0.5877&0.2876&0.2931\\ \hline
         $\bb \in \Rea^{50\times50}$&0.3677&0.2337&0.2376\\ \hline
         $\bb \in \Rea^{25\times25}$&0.1116&0.1198&0.1125
    \end{tabular}
    \caption{Results from Experiment 3. The above two tables record the location of the minimizer and contrast. Closer to $(0.5,0.5)$ is better for $\theta,$ and the larger the contrast the better.}
    \label{table::exp-3::results}
\end{table}

Throughout all trials of this experiment, $\theta^s$ was closer to $\hat \theta$ then $\theta^1$ or $\theta^2$. Additionally, the contrast is highest when the $\struc{\cdot}$ is used, except when $\mathcal{B}_{4\Delta y_0} \cong \Rea^{25\times25}$. These results show also that the degree to which the problem is overdetermined is indeed important. The more overdetermined the problem, the more nearly $\struc{r_{\theta,\eta}}$ is minimized at $\hat \theta$. Further, the more overdetermined the problem the greater the contrast of $\struc{\cdot}$ relative to $\norm{\cdot}_1$ or $\norm{\cdot}_2$. When $\mathcal{B}_{\Delta y_0} \cong \Rea^{100\times 100}$,  $\contrast(\struc{r_{\theta,\eta}})$ is more than twice either $\contrast(\norm{r_{\theta,\eta}}_1)$ or $\contrast(\norm{r_{\theta,\eta}}_2)$. The ratio of $\contrast(\struc{r_{\theta,\eta}})$ to either $\contrast(\norm{r_{\theta,\eta}}_1)$ or $\contrast(\norm{r_{\theta,\eta}}_2)$ decreases as $L$ becomes square, until finally  $\mathcal{B}_{4\Delta y_0} \cong \Rea^{25\times 25}$ and all three contrasts are similar. These results are consistent with Thms. \ref{thm::noise-calc} - \ref{thm::smooth-func}, which together suggest that as $\Delta y$ decreases, the ability of $\struc{\cdot}$ to distinguish between noise and structure increases.

\section{Conclusion} \label{sec::conclusion}

In this work we have developed a new functional called the structure, which is suitable for detecting forward operator error as it arises in inverse problems. The structure is defined by use of the Earth Mover's Distance (EMD), using a very rapid algorithm and a homogeneous degree one distance. The structure takes as input the residual from an existing inverse procedure, and can be computed quickly. We prove some apparently new results concerning the treatment of noise by EMD. Further, we consistent with these theoretical results we perform numerical experiments and show that the structure is able to distinguish between error in the modeling of a forward operator, and noise in the signal of an inverse problem.

Our numerical results concern a model linear forward operator. On these problems the structure of the residual is indeed minimized when the correct forward operator is used and. The $L_1$ or $L_2$ norms of the residual are also minimized around the correct forward operator, the structure, however, is more localized and has better contrast around the minimum. Further, we observe that the degree to which the inverse problem is overdetermined is pivotal to the success of our procedure. The more over determined the problem, the more useful the structure. This is borne out by the analysis in the case of linear regularization, as well as the numerical results on more sophisticated problem.

In the future, we will extend our work to more sophisticated non-linear operators and promote our error detecting method into an error correcting method.

\appendix

\section{Proofs}

\begin{proof} [Proof of Proposition \ref{thm::residual-tikhonov}]
    Given $\Phi(\bv;\lambda) = \lambda\norm{\bC \bv}^2_2$, the normal equations for Eq. \ref{eqn::linear-regularized-inverse-problem} are
    \begin{equation}
    \label{eqn::normal_equations}
        (\bL_\theta ^T\bL_\theta + \lambda \bC^T\bC)\tilde \bu_{\theta,\eta} = \bL_\theta^T (\bb  + \bseta).
    \end{equation}
    Therefore $\tilde \bL^{-1}_\theta = (\bL_\theta^T \bL_\theta + \lambda \bC^T \bC^T)^{-1} \bL_\theta^T$.  Using the GSVD in Eq. \eqref{eq::LC-GSVD}, a direct calculation gives
    \begin{equation}
        \bL_\theta \tilde \bL_\theta^{-1} = \bU_\theta \bD_{\theta,\lambda} \bU_\theta^T, \quad 
       \text{where $\bD_{\theta,\lambda} := \frac{ {\bf\Sigma}^2_{\theta} } { {\bf\Sigma}_{\theta}^2 + \lambda {\bf\Gamma}_{\theta}^2} \in \Rea^{n \times n}$.}
    \end{equation}
    Thus according to the definition of the residual in Eq. \ref{eqn::inovation-definition}, 
    \begin{align}
        \label{eqn::residual-expansion-correct-op}
        \br_{ \theta,\eta} &= (\bI - \bL \tilde \bL^{-1} ) ( \bb + \bseta)  
            =  \bU_\theta \hat \bD_{\theta,\lambda} \bU_\theta ^T (\bb + \bseta)
            + (\bI - \bU_\theta\bU_\theta ^T)(\bb + \bseta)
    \end{align}
    where
    \begin{equation} 
       \hat \bD_{\theta,\lambda}  
        := (\bI - \bD_{\theta,\lambda})
        = \frac{\lambda {\bf\Gamma}^2_{\theta}} {{\bf\Sigma}_{\theta}^2 + \lambda {\bf\Gamma}_\theta^2} > 0.
    \end{equation}
    We first bound two of the deterministic components of the residual.  
    Using the GSVD,
    \begin{align}
        \bU_\theta \hat \bD_{\theta,\lambda}  \bU_\theta ^T \bb 
        &= \bU_\theta \hat \bD_{\theta,\lambda}  \bU_\theta ^T \bL_\theta \bu
            + \bU_\theta \hat \bD_{\theta,\lambda}  \bU_\theta ^T (\bb - \bL_\theta \bu) \nonumber \\
        &=  \bU_\theta \hat \bD_{\theta,\lambda}  {\bf\Sigma}_\theta \bZ_\theta^T \bu 
            + \bU_\theta \hat \bD_{\theta,\lambda}  \bU_\theta^T (\bb - \bL_\theta \bu).
    \end{align}
     Since  $\norm{\hat \bD_{\theta,\lambda}}_2 \leq 1$  and $\bU_\theta$ is orthogonal, it follows that
    \begin{equation}
        \label{eq::projection_error}
        \norm{\bU_\theta \hat\bD_{\theta,\lambda} \bU_\theta^T (\bb - \bL_\theta \bu)}^2_2 \leq  \norm{(\bb - \bL_\theta \bu)}^2_2
    \end{equation}
    Furthermore, since
    \begin{equation}
        \hat \bD_{\theta,\lambda}   {\bf\Sigma}_\theta
            = \frac{\lambda {\bf\Gamma}^2_{\theta}{\bf\Sigma}_\theta}{{\bf\Sigma}_{\theta}^2 + \lambda {\bf\Gamma}_\theta^2}
            \leq \frac12 \sqrt{\lambda} {\bf\Gamma}_\theta 
            \leq \frac12 \sqrt{\lambda} \bI
    \end{equation}
    (where the inequalities between the diagonal matrices above are interpreted element-wise), it follows that
    \begin{equation}
    \label{eq::regularization_error}
        \norm{\bU_\theta \hat \bD_{\theta,\lambda}  {\bf\Sigma}_\theta \bZ_\theta^T \bu}^2_2
            \leq \norm{\hat \bD_{\theta,\lambda}   {\bf\Sigma}_\theta}^2_2\norm{\bZ_\theta^T \bu}^2_2
            \leq \frac{1}{4}\lambda \norm{\bZ_\theta^T \bu}^2_2.
    \end{equation}
    We next bound the noise component of the residual. Let $\bW_\theta \in \bbR^{m \times (m-n)}$ be a matrix such that $\bQ := (\bU_\theta | \bW_\theta) \in \bbR^{m \times m}$
    is orthogonal and set
    \begin{equation}
       \bsalpha = \begin{pmatrix}
            \bsalpha_\parallel  \\ \bsalpha_{\perp}
                \end{pmatrix} := \bQ^T \bseta  
            = \begin{pmatrix}
            \bU^T_\theta \bseta  \\ \bW_\theta^T \bseta
                \end{pmatrix}.
    \end{equation}
    Then 
    \begin{equation}
         \norm{(\bI - \bL \tilde \bL^{-1} ) \bseta}^2_2
         = \norm{\bU_\theta \hat \bD_{\theta,\lambda} \bU_\theta ^T \bseta + (\bI - \bU_\theta \bU_\theta ^T) \bseta}^2_2
         = \norm{\bU_\theta \hat \bD_{\theta,\lambda} \bsalpha_\parallel}^2_2   + \norm{\bW_\theta \bsalpha_{\perp}}^2_2,
    \end{equation}
    where the last equality uses the fact that the columns of $\bU_\theta$ and $\bW_\theta$ are orthogonal and $\bI - \bU_\theta \bU_\theta ^T = \bW_\theta \bW_\theta^T$.
    Due to the spherical symmetry assumption on $\bseta$, $\bsalpha_\parallel$ and $\bsalpha_{\perp}$ are spherically symmetric random variables of dimension $n$ and $m-n$, respectively, with components that are independent. Therefore
    \begin{align}
        \expec{\norm{\bU_\theta \hat \bD_{\theta,\lambda}\bsalpha_\parallel}^2_2} 
            &= \expec{\norm{\hat \bD_{\theta,\lambda}\bsalpha_\parallel}^2_2} \nonumber\\
            &= \sum^n_{i = 1}\left ( \frac{\lambda \gamma^2_i}{\sigma^2_i + \lambda \gamma^2_i} \right )^2 \expec{\bseta^2_i} 
            = \frac{1}{m}\Tr(\hat{\bD}^2_{\theta,\lambda}) \expec{\norm{\bseta}^2_2}
    \end{align}
    and 
    \begin{equation}
        \expec{ \bW_\theta \norm{\bsalpha_\perp}^2_2} = \expec{\norm{\bsalpha_\perp}^2_2} = \frac{m-n}{m}\expec{\norm{\bseta}^2_2}.
    \end{equation}
    This completes the proof.
\end{proof}

\begin{proof}[Proof of Proposition \ref{thm::EMD-struc-corr-proof}]
It is convenient to write \eqref{eqn::benier-benamou} in the abstract form 
    \begin{equation}
        \EMD(\rho_1,\rho_2) = \min_{m \in C(\rho_1,\rho_2)} {\cT(m)}. 
    \end{equation}
In addition, for any $f \in L^1(\Omega)$, let $m_f$ be a minimizer of $\cT(f^+,f-)$ over $C(f^+,f-)$ so that $\struc{f} = \cT(m_f)$. 
    \begin{enumerate}
    \item We check  absolute homogeneity, positivity, and the triangle inequality.
    \begin{enumerate}
        \item 
            To check absolute homogeneity, let $\lambda \in \bbR$ be a nonzero scalar.  By linearity, $m \in C(|\lambda| f,|\lambda| g)$ if and only if $ |\lambda|^{-1} m \in C(f, g)$.  Therefore
            \begin{multline}
            \label{eqn::EMD_abs_hom}
                \EMD(|\lambda| f, |\lambda| g) 
                    = \min_{m \in C(|\lambda|f,|\lambda| g)} \cT(m) \\
                    = \min_{m \in C(f,g)} \cT(|\lambda| m)  
                    = |\lambda| \min_{m \in C(f,g)} \cT(m) 
                    = |\lambda| \EMD(f,g),
            \end{multline}
            If $\lambda > 0$, \eqref{eqn::EMD_abs_hom} implies that
            \begin{equation}
                \struc{\lambda f} = \EMD(\lambda f^+, \lambda f^-) = |\lambda| \EMD(f^+, f^-) = |\lambda| \struc{f}
            \end{equation}
            If $\lambda < 0$, then $(\lambda f)^{\pm} = |\lambda| f^{\mp}$.  Again \eqref{eqn::EMD_abs_hom} implies that
            \begin{multline}
                \struc{\lambda f} = \EMD((\lambda f)^+, (\lambda f)^-) = \EMD(|\lambda| f^-, |\lambda| f^+)  \\
                = |\lambda| \EMD(f^-, f^+) = |\lambda| \EMD(f^+, f^-) = |\lambda| \struc{f}.
            \end{multline}
            Finally, if $\lambda=0$, then the fact that $\struc{\lambda f} = \lambda \struc{f} = 0$ is trivial.
    
        \item Positivity follows immediately from the positivity of $\EMD$.
        \item  The triangle inequality follows from the fact that 
        \begin{equation}
            (f + g)^+ - (f + g)^- = (f^+ - f^-) + (g^+  - g^-) 
        \end{equation}
        for all $f, g \in L^1(\Omega)$. Thus if $m_f \in C(f^+,f^-)$ and $m_g \in C(g^+,g^-)$, then $m_f + m_g \in C\left ((f+g)^+,(f+g)^-\right)$. Along with the triangle inequality for $\cT$, this implies that
        \begin{align}
            \struc{f + g} &\equiv \cT(m_{f+g}) 
            \leq  \cT(m_{f} + m_g) 
            \leq \cT(m_{f})  +  \cT(m_g)
            \equiv \struc{f} + \struc{g}.
        \end{align}
    \end{enumerate}
    
    \item Because $\frac{1}{\norm{\Omega}}\int_\Omega (g + c) dx = \frac{1}{\norm{\Omega}}\int_\Omega g dx + c,$ we have that $g^+ = (g + c)^+,$ and $g^- = (g + c)^-.$ Therefore 
    \begin{equation}
    \label{eq::struc-const}
        \struc{g + c} = \EMD\left ((g + c)^+,(g + c)^-\right) = \EMD(g^+,g^-) = \struc{g}.
    \end{equation}
    
    \item Let $g = 0$ in \eqref{eq::struc-const} above.  Then
    \begin{align}
    \label{eq::struc-const-zero}
        \struc{c} = \struc{0} = 0, \quad \forall c \in \Rea.
    \end{align}
    
    \item Because the constraint in Eq. \ref{eqn::benier-benamou} involves only the difference of $\rho_1$ and $\rho_2$, it follows that $\EMD(\rho_1,\rho_2) = \EMD(\rho_1 + f,\rho_2 + f)$ for any non-negative $f \in L^1(\Omega)$.   Moreover, because $\rho_2$ and $\rho_1$ have the same mass, the average of $\rho_2 - \rho_1$ is zero.  Hence,
    \begin{align}
    \label{eq::struc-is-EMD-intermediate}
        \struc{\rho_2 - \rho_1} &= \EMD(\max(\rho_2 - \rho_1,0),\max(\rho_1 - \rho_2,0)) \nonumber \\
        &= \EMD(\max(\rho_2 - \rho_1,0) + \min(\rho_1,\rho_2),\max(\rho_1 - \rho_2,0) + \min(\rho_1,\rho_2))
    \end{align}
    Since $\forall x,y \in \Rea, \max(x - y,0) + \min(x,y) = x$, it follows from \eqref{eq::struc-is-EMD-intermediate} that
    \begin{equation}
    \label{eq:struc-is-EMD}
        \struc{\rho_2 - \rho_1} = \EMD(\rho_2,\rho_1) = \EMD(\rho_1,\rho_2) 
    \end{equation}
    
    \end{enumerate}
\end{proof}

\

Before proving Thm. 1-3, we will first prove two useful lemmas, which will be used extensively.

\begin{lemma}[$\EMD$ triangle inequality]\label{lem::triangle-inequality-emd}
Let $\Omega\subset \Rea^n$ be a bounded set and $f$, $g$, $h \in L^{\infty}(\Omega)$ and $\int_{\Omega}f dx = \int_{\Omega}h dx  = \int_{\Omega}g dx$. Then 
\begin{equation}
    \EMD(f,g) \leq \EMD(f,h) + \EMD(h,g).
\end{equation}
\end{lemma}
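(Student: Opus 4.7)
The plan is to use the Benamou--Brenier (flux) formulation of $\EMD$ given in Eq.~\ref{eqn::benier-benamou}, since the constraint there is linear in the flux and depends only on the \emph{difference} of the two densities. This makes it straightforward to combine transport plans between successive pairs.

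First, I would fix $f,g,h\in L^\infty(\Omega)$ with a common integral. Let $m_1$ be a minimizer (or, if one does not exist, take a minimizing sequence) for the flux problem associated with $\EMD(f,h)$, so that
\begin{equation}
    \nabla\cdot m_1 + h - f = 0 \quad \text{in } \Omega, \qquad m_1\cdot n = 0 \quad \text{on } \partial\Omega,
\end{equation}
and $\int_\Omega \|m_1(x)\|_2\,dx = \EMD(f,h)$. Analogously, let $m_2$ satisfy
\begin{equation}
    \nabla\cdot m_2 + g - h = 0 \quad \text{in } \Omega, \qquad m_2\cdot n = 0 \quad \text{on } \partial\Omega,
\end{equation}
with $\int_\Omega \|m_2(x)\|_2\,dx = \EMD(h,g)$.

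Next, by linearity of the divergence and of the boundary trace, the sum $m_1+m_2$ is admissible for the pair $(f,g)$:
\begin{equation}
    \nabla\cdot(m_1+m_2) + g - f = 0, \qquad (m_1+m_2)\cdot n = 0 \text{ on } \partial\Omega.
\end{equation}
Therefore $m_1+m_2$ is a feasible flux for $\EMD(f,g)$, and combining this with the pointwise triangle inequality for $\|\cdot\|_2$ and monotonicity of the integral gives
\begin{equation}
    \EMD(f,g) \le \int_\Omega \|m_1(x)+m_2(x)\|_2\,dx \le \int_\Omega \|m_1\|_2\,dx + \int_\Omega \|m_2\|_2\,dx = \EMD(f,h)+\EMD(h,g).
\end{equation}

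The only subtlety, which I expect to be the main (minor) obstacle, is the existence of the minimizing fluxes $m_1,m_2$: the flux functional is convex and coercive on the affine constraint set, so standard direct-method arguments apply, but if one prefers to avoid this one may pick near-optimal fluxes with cost at most $\EMD(f,h)+\varepsilon$ and $\EMD(h,g)+\varepsilon$ and send $\varepsilon\to 0$. A second small point is that $f,g,h$ need not be non-negative; however, the Benamou--Brenier form only requires equal total mass so that the divergence equation is solvable, which is exactly the hypothesis of the lemma, so the argument goes through verbatim in the signed setting.
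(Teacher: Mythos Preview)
Your argument is correct. It is essentially the same idea the paper uses, but packaged differently: the paper's proof of this lemma simply invokes Proposition~\ref{thm::EMD-struc-corr-proof}, writing $\EMD(f,g)=\struc{f-g}$ and then applying the triangle inequality for $\struc{\cdot}$ already established there. If you trace back that triangle inequality (part~1(c) of Proposition~\ref{thm::EMD-struc-corr-proof}), it is precisely your flux-addition argument in the Benamou--Brenier formulation: the sum $m_f+m_g$ of feasible fluxes is feasible for the combined constraint, and $\cT(m_f+m_g)\le\cT(m_f)+\cT(m_g)$. So you have inlined what the paper proved once and then cited; there is no substantive difference in the mathematics, only in the organization. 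Your direct route has the small advantage of being self-contained, while the paper's route reuses a result it needed anyway.
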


\begin{proof}
    Recall from Prop. \ref{thm::EMD-struc-corr-proof} that $\struc{f - g} = \EMD(f,g)$, then by the triangle inequality of $\struc{\cdot}$,
    \begin{align}
        \EMD(f,g) = \struc{f - g} \leq \struc{f - h} + \struc{h - g} = \EMD(f, h) + \EMD(h, g)
    \end{align}
\end{proof}

\begin{lemma}[$\struc{\cdot}$ and $\EMD$ of the mean] \label{lem::struc-and-emd-of-mean}
    $\Omega\subset \Rea^n$ be a bounded set and $f \in L^{\infty}(\Omega)$ and $\mu = \frac{1}{|\Omega|}\int_{\Omega}f dx$. Then
    \begin{equation}
        \struc{f} = \EMD(f , \mu).
    \end{equation}
\end{lemma}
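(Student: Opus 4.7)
The plan is to reduce the claim to properties 2 and 4 of Proposition \ref{thm::EMD-struc-corr-proof} by exploiting the identity $f^+ - f^- = f - \mu$, which is immediate from the definition of $f^\pm$ in Definition \ref{def::structure}. Nothing new needs to be estimated; the lemma should fall out by rewriting the structure so that the two densities appearing inside $\EMD$ are $f$ and the constant $\mu$.

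Concretely, I would first apply property 2 of Proposition \ref{thm::EMD-struc-corr-proof} to shift $f$ by the constant $-\mu$, obtaining $\struc{f} = \struc{f - \mu}$. Assuming $f \geq 0$, so that $\EMD(f,\mu)$ is defined in the sense used in the paper, the constant function $\mu$ and $f$ are two non-negative densities on $\Omega$ with the same total mass $\mu|\Omega|$. Property 4 with $\rho_1 = \mu$ and $\rho_2 = f$ then gives $\struc{f-\mu} = \EMD(\mu,f)$, and the symmetry of $\EMD$ in its two arguments (immediate from swapping the roles of the marginals in Eq. \ref{eqn::wasserstein-distance-definition}) yields the claimed identity $\struc{f} = \EMD(f,\mu)$.

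An even more direct route is to write both sides using the Benamou--Brenier form in Eq. \ref{eqn::benier-benamou}: $\struc{f} = \EMD(f^+,f^-)$ is the infimum of $\int_\Omega \|m(x)\|_2\, dx$ over vector fields $m$ with no-flux boundary satisfying $\nabla\cdot m + f^- - f^+ = 0$, while $\EMD(f,\mu)$ is the infimum of the same objective subject to $\nabla\cdot m + \mu - f = 0$. Because $f^+ - f^- = f - \mu$ pointwise, the two constraint sets coincide and therefore so do the minima. The only subtlety, and essentially the only thing worth flagging, is the sign convention: the paper defines $\EMD$ only for non-negative densities, so one must either restrict to $f \geq 0$ or observe that the flux formulation extends naturally to any two $L^\infty$ functions of equal mean; under either interpretation the argument above is unchanged.
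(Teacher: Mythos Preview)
Your proposal is correct and is essentially the same as the paper's argument: the paper adds the nonnegative function $h=\mu-f^-=\min(f,\mu)$ to both slots of $\EMD(f^+,f^-)$, using the translation invariance observed in the proof of Proposition~\ref{thm::EMD-struc-corr-proof}, which is exactly your Benamou--Brenier observation that the constraint depends only on $f^+-f^-=f-\mu$. Your first route via properties 2 and 4 is just a repackaging of the same step, and your remark that $f\geq 0$ is needed for $\EMD(f,\mu)$ to be defined matches the implicit assumption in the paper's one-line proof.
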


\begin{proof}
    Recall from Prop. \ref{thm::EMD-struc-corr-proof} that $\EMD(f,g) = \EMD(f + h, g + h),$ therefore
    \begin{align}
        \struc{f} = \EMD(f^+, f^-) = \EMD(f^+ +(\mu - f^-), f^- +(\mu - f^-)) = \EMD(f , \mu).
    \end{align}
\end{proof}

\begin{lemma}[$\EMD$ Subadditivity]
\label{lem::EMD-subadditivity}
    If $\EMD(f_1,g_1)$ and $\EMD(f_2, g_2)$ are well defined, then so too is $\EMD(f_1 + f_2, g_1 + g_2)$, and 
    \begin{equation}
        \EMD(f_1 + f_2, g_1 + g_2) \leq \EMD(f_1, g_1) + \EMD(f_2, g_2).
    \end{equation}
\end{lemma}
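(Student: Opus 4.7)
The plan is to work in the Benamou--Brenier flux formulation of $\EMD$ given in Eq.~\ref{eqn::benier-benamou}. First I would dispatch the well-definedness claim: since each $\EMD(f_i,g_i)$ is well-defined, the $f_i$ and $g_i$ are non-negative with $\int_\Omega f_i = \int_\Omega g_i$, so by linearity of the integral $f_1+f_2$ and $g_1+g_2$ are non-negative with equal total mass and $\EMD(f_1+f_2,\,g_1+g_2)$ is well-defined.

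Next, let $m_1$ and $m_2$ be minimizing fluxes attaining $\EMD(f_1,g_1)$ and $\EMD(f_2,g_2)$ respectively, and form the candidate $m := m_1+m_2$. The linearity of the divergence and of the boundary trace give
\begin{equation*}
\nabla\cdot m + (g_1+g_2) - (f_1+f_2) = \bigl(\nabla\cdot m_1 + g_1 - f_1\bigr) + \bigl(\nabla\cdot m_2 + g_2 - f_2\bigr) = 0,
\end{equation*}
and $m\cdot n = m_1\cdot n + m_2\cdot n = 0$ on $\partial\Omega$, so $m$ is admissible for the combined transport problem.

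The conclusion then follows by combining the minimality defining $\EMD(f_1+f_2,g_1+g_2)$ with the pointwise Euclidean triangle inequality $\|m_1(x)+m_2(x)\|_2 \le \|m_1(x)\|_2 + \|m_2(x)\|_2$ and integrating:
\begin{equation*}
\EMD(f_1+f_2,\,g_1+g_2) \le \int_\Omega \|m_1+m_2\|_2 \, dx \le \int_\Omega \|m_1\|_2 \, dx + \int_\Omega \|m_2\|_2 \, dx = \EMD(f_1,g_1) + \EMD(f_2,g_2).
\end{equation*}

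There is no serious obstacle here; the argument is a direct ``linearity of constraints plus triangle inequality of the integrand'' maneuver. An equally clean alternative is to work in the Kantorovich formulation of Eq.~\ref{eqn::wasserstein-distance-definition}: if $\pi_i$ is optimal for $(f_i,g_i)$, then the marginal constraints immediately show that $\pi_1+\pi_2$ is admissible for $(f_1+f_2,\,g_1+g_2)$, and its cost is exactly $\EMD(f_1,g_1) + \EMD(f_2,g_2)$, yielding the same bound without even needing the pointwise triangle inequality for $\|\cdot\|_2$.
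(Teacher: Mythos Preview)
Your argument is correct. Your primary route via the flux formulation of Eq.~\ref{eqn::benier-benamou} differs from the paper's proof, which uses exactly the Kantorovich alternative you sketch at the end: take optimal plans $\pi_1,\pi_2$, observe that $\pi_1+\pi_2$ is non-negative with the correct marginals, and use minimality together with linearity of the cost $\int c\,\pi$. The Kantorovich route is marginally cleaner because the objective is already linear in $\pi$, so no pointwise triangle inequality is needed; your flux route is equally valid but spends that inequality to pass from $\int\|m_1+m_2\|_2$ to the sum of the individual costs. Both arguments are instances of the same ``sum of feasible points is feasible, objective is subadditive'' pattern, and either would be acceptable here.
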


\begin{proof}
    We use the Eq. \ref{eqn::emd-definition} of the $\EMD$. Let $\pi_1$ and $\pi_2$ satisfy the constraint of Eq. \ref{eqn::wasserstein-distance-definition} for $\EMD(f_1, g_1)$ and $\EMD(f_2, g_2)$ resp. Then clearly
    \begin{align}
        \int_{\Omega} (\pi_1 + \pi_2)dx^{(2)} &= f_1 + f_2 \nonumber\\
        \int_{\Omega} (\pi_1 + \pi_2)dx^{(1)} &= g_1 + g_2 \nonumber\\
        \pi_1  + \pi_2 \geq 0,
    \end{align}
    and so by the minimality of the $\EMD$,
    \begin{align}
        \EMD(f_1, g_1) + \EMD(f_2, g_2) &= \int_{\Omega\times\Omega}c \pi_1 dx^{(1)} dx^{(2)} + \int_{\Omega\times\Omega}c \pi_2 dx^{(1)} dx^{(2)} \nonumber\\
        &= \int_{\Omega\times\Omega}c (\pi_1 + \pi_2) dx^{(1)} dx^{(2)} \nonumber\\
        &\geq \min_{\pi \geq 0} \int_{\Omega\times\Omega}c \pi dx^{(1)} dx^{(2)}\nonumber\\
        &= \EMD(f_1 + f_2, g_1 + g_2)
    \end{align}
    where $\pi$ is subject to the constraints of Eq. \ref{eqn::wasserstein-distance-definition} where $\rho_1 = f_1 + f_2$ and $\rho_2 = g_1 + g_2$.
\end{proof}

\begin{lemma}[$\EMD$ is bounded by the $L_1$ norm] \label{lem::EMD-bound-by-L1-norm}
    Let $\Omega$ be a bounded set, and $l \geq \norm{x^{(1)} - x^{(2)}}_2$ for all $x^{(1)}, x^{(2)} \in \Omega$. If $f,g : \Omega \rightarrow \Rea^+$ then 
    \begin{equation}
        \EMD(f,g) \leq \frac{l}{2} \norm{f - g}_{L^1(\Omega)}.
    \end{equation}
\end{lemma}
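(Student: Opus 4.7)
The plan is to reduce the problem to transporting only the mismatched mass between $f$ and $g$, and then bound the cost by an explicit product coupling, exploiting the fact that the ground metric is bounded by $l$.

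First I would invoke the identity $\EMD(\rho_1,\rho_2) = \EMD(\rho_1 + h, \rho_2 + h)$ valid for any non-negative $h \in L^1(\Omega)$, which was already established inside the proof of Proposition \ref{thm::EMD-struc-corr-proof} (part 4) via the fact that the constraint in Eq. \ref{eqn::benier-benamou} depends only on the difference $\rho_2 - \rho_1$. Taking $h = \min(f,g)$ gives
\[
\EMD(f,g) = \EMD\!\left((f-g)^+,\, (g-f)^+\right).
\]
Well-definedness of $\EMD(f,g)$ implicitly requires $\int_\Omega f = \int_\Omega g$, so the two positive-part functions are disjointly supported with a common mass $M := \int_\Omega (f-g)^+ \, dx = \int_\Omega (g-f)^+ \, dx = \tfrac{1}{2}\norm{f-g}_{L^1(\Omega)}$.

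Next I would exhibit the feasible product transport plan
\[
\pi(x^{(1)},x^{(2)}) = \frac{1}{M}\,(f-g)^+(x^{(1)})\,(g-f)^+(x^{(2)}),
\]
handling $M=0$ separately as the trivial case $f=g$ a.e. A direct computation of the marginals confirms that $\pi$ lies in the feasible set for $\EMD((f-g)^+, (g-f)^+)$ in Eq. \ref{eqn::wasserstein-distance-definition} with $p=1$. Since $c(x^{(1)},x^{(2)}) = \norm{x^{(1)} - x^{(2)}}_2 \leq l$ pointwise on $\Omega \times \Omega$,
\[
\int_{\Omega\times\Omega} c(x^{(1)},x^{(2)})\,\pi(x^{(1)},x^{(2)})\, dx^{(1)} dx^{(2)} \;\leq\; l\int_{\Omega\times\Omega} \pi \, dx^{(1)} dx^{(2)} \;=\; l M \;=\; \frac{l}{2}\norm{f-g}_{L^1(\Omega)},
\]
and the minimality of the $\EMD$ then gives the claim after chaining with the reduction in the first step.

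There is no genuine obstacle in this argument. The only subtle point is recognizing that one may subtract off the common part $\min(f,g)$ to reduce to two disjointly supported non-negative distributions whose common mass is exactly $\tfrac{1}{2}\norm{f-g}_{L^1(\Omega)}$; after that reduction, the inequality is the classical observation that transporting total mass $M$ across a set of diameter at most $l$ costs at most $lM$ under any (in particular the product) coupling.
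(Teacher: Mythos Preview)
Your argument is correct. Both you and the paper first reduce $\EMD(f,g)$ to $\EMD\!\left((f-g)^+,(f-g)^-\right)$ via the shift-invariance established in Proposition~\ref{thm::EMD-struc-corr-proof}. The difference lies in how that reduced EMD is bounded. The paper routes the transport through a Dirac mass $\gamma\,\delta_{\xc}$ placed at a putative center of $\Omega$, applies the triangle inequality (Lemma~\ref{lem::triangle-inequality-emd}), and bounds each leg by $(l/2)$ times the corresponding $L^1$ mass. You instead write down the product coupling between $(f-g)^+$ and $(g-f)^+$ and bound the cost directly by $l\cdot M$ with $M=\tfrac12\|f-g\|_{L^1(\Omega)}$. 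Your route is slightly more elementary: it needs only feasibility of one explicit plan, not the triangle inequality, and it does not require the existence of a point $\xc$ with $\|\xc-x\|_2\le l/2$ for all $x\in\Omega$, an assumption the paper makes tacitly but which is not implied by $l\ge\operatorname{diam}(\Omega)$ alone. The paper's approach, on the other hand, dovetails with the ``move everything to the center'' construction used elsewhere (e.g.\ in the proof of Theorem~\ref{thm::noise-calc}), so it reinforces a recurring motif in the appendix.
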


\begin{proof}
    Let $\gamma = \int_{\Omega}{(f - g)^+}dx$ and $\xc$ be such that $\norm{\xc - x}_2 \leq l/2$ $\forall x \in \Omega$ then
    \begin{align}
        \EMD(f,g) &= \struc{f - g} \leq \EMD((f-g)^+,\gamma \delta_{\xc}) + \EMD(\gamma \delta_{\xc}, (f-g)^-)\nonumber\\
        & \leq \frac{l}{2} \norm{(f - g)^+}_{L^1(\Omega)} + \frac{l}{2} \norm{(f - g)^-}_{L^1(\Omega)} 
        = \frac{l}{2} \norm{f - g}_{L^1(\Omega)}
    \end{align}
The last two lines could use a few details between them.
\end{proof}

\begin{lemma}[Expectation bound by the standard deviation]
	 \label{lem::expec-abs-is-less-than-stdev}
    Let $\eta$ be a scalar random variable with zero mean such that $\var[\eta]$ is finite. Then
   $\expec{|\eta|} \leq \sqrt{ \var[\eta]}$.
\end{lemma}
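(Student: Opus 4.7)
The plan is to apply Jensen's inequality to the convex function $t \mapsto t^2$ on the nonnegative random variable $|\eta|$. Since $\eta$ has zero mean, the variance is simply the second moment, so the claim collapses to the standard fact that $\expec{|\eta|}^2 \leq \expec{\eta^2}$.

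First I would observe that $\var[\eta] = \expec{\eta^2} - (\expec{\eta})^2 = \expec{\eta^2}$, using the hypothesis $\expec{\eta} = 0$. Second, because $t \mapsto t^2$ is convex, Jensen's inequality applied to the random variable $|\eta|$ gives
\begin{equation}
    \left(\expec{|\eta|}\right)^2 \leq \expec{|\eta|^2} = \expec{\eta^2} = \var[\eta].
\end{equation}
Finally, I would take the (nonnegative) square root of both sides to conclude $\expec{|\eta|} \leq \sqrt{\var[\eta]}$. Both quantities are well-defined and finite by the hypothesis that $\var[\eta]$ is finite (in particular $|\eta| \in L^2 \subset L^1$).

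There is no real obstacle here; the only thing to be slightly careful about is that Jensen requires $|\eta|$ to be integrable in order to write down $\expec{|\eta|}$, which is guaranteed by finiteness of the variance via the inclusion $L^2(\mathbb{P}) \subset L^1(\mathbb{P})$ for probability measures. An equivalent route would be Cauchy–Schwarz, $\expec{|\eta| \cdot 1} \leq \sqrt{\expec{\eta^2}}\sqrt{\expec{1}} = \sqrt{\var[\eta]}$, which I would mention only if a one-liner is preferred over invoking Jensen.
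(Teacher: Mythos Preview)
Your proof is correct. The paper uses exactly the Cauchy--Schwarz one-liner you mention at the end, writing $\expec{|\eta|} = \int |x|\psi(x)\,dx \leq \left(\int x^2\psi(x)\,dx\right)^{1/2}\left(\int \psi(x)\,dx\right)^{1/2} = \sqrt{\var[\eta]}$; your primary route via Jensen is of course equivalent.
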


\begin{proof}
     Let $\psi$ be the probability distribution for $\eta$. By the Cauchy-Schwarz inequality,
    \begin{align}
       \expec{|\eta|} \equiv \int^\infty_{-\infty} |x|\psi(x)dx 
       \leq \left (\int^{\infty}_{-\infty} x^2\psi(x) dx \right)^\frac{1}{2} \left (\int^\infty_{-\infty} \psi(x) dx \right )^\frac{1}{2} 
       = \big(\var[\eta] \big)^{1/2}.
    \end{align}

\end{proof}

\

We now proceed to the proof of Theorem 2, but first it is helpful to give a brief summary. To bound the EMD from above, we give a candidate transport plan that is based on the multigrid strategy depicted in Fig. \ref{fig::proof-aid} for the case $d=2$.
In this case, the strategy is to divide the domain into square windows with two square panels per side, as shown in Figure \ref{fig::proof-aid}. The mass in each window is then redistributed in such a way that the new distribution is constant on each window. Each window then becomes a panel in a window that is a factor a factor of two larger in each dimension, and the process is repeated until the distribution on the entire square is constant.   For $d >2$, the plan is the same, except that each window is a hypercube $2^d$ panels.  The cost of the complete transport plan can be bounded by the sum of the costs of the transport plan for each step.  These costs are computed in the proof below and their sum leads to the bound in Theorem \ref{thm::noise-calc}.

\begin{figure}
    \centering
    \begin{subfigure}[b]{.3\linewidth}
        \centering
        \includegraphics[width=\textwidth,keepaspectratio]{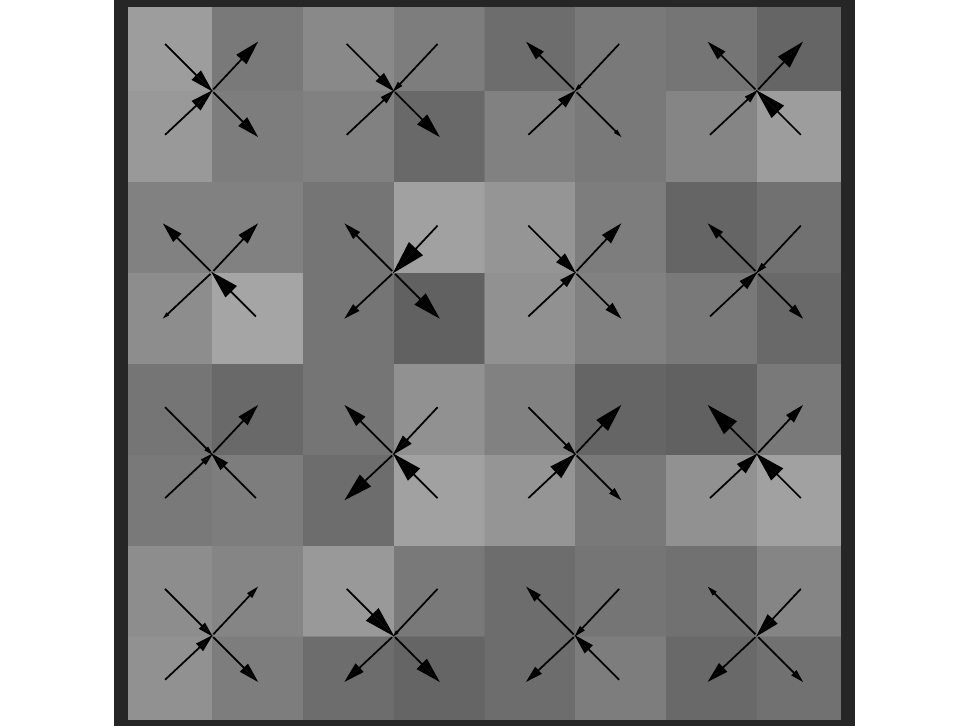}
        \subcaption{$H_3$}
        \label{fig::proof-aid::8-panels-per-side}
    \end{subfigure}
    \begin{subfigure}[b]{.3\linewidth}
        \centering
        \includegraphics[width=\textwidth,keepaspectratio]{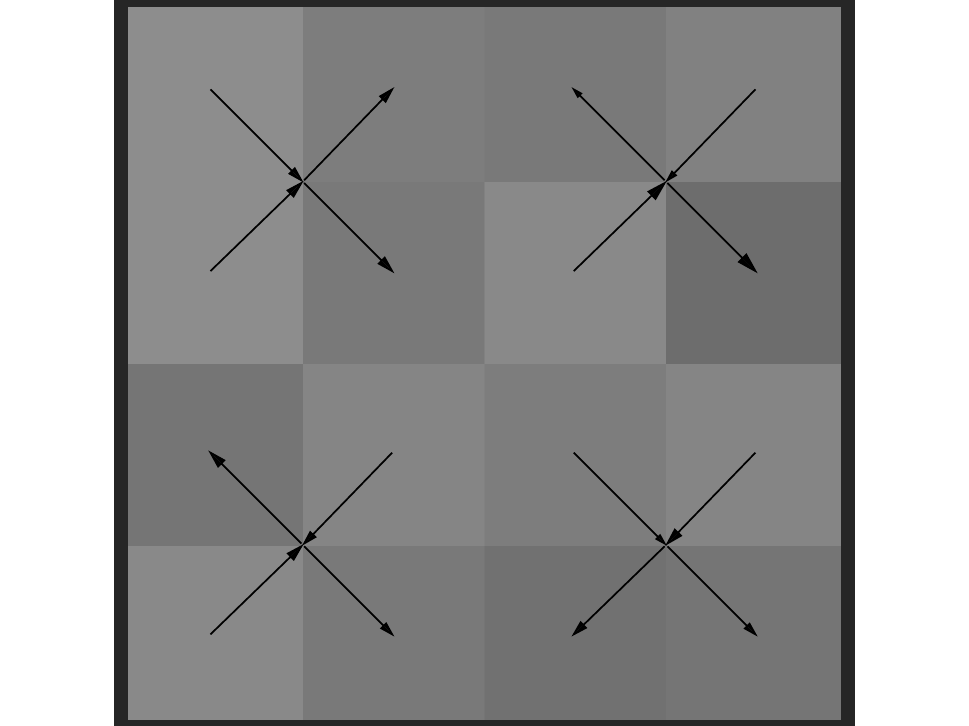}
        \subcaption{$H_2$}
        \label{fig::proof-aid::4-panels-per-side}
    \end{subfigure}
    \begin{subfigure}[b]{.3\linewidth}
        \centering
        \includegraphics[width=\textwidth,keepaspectratio]{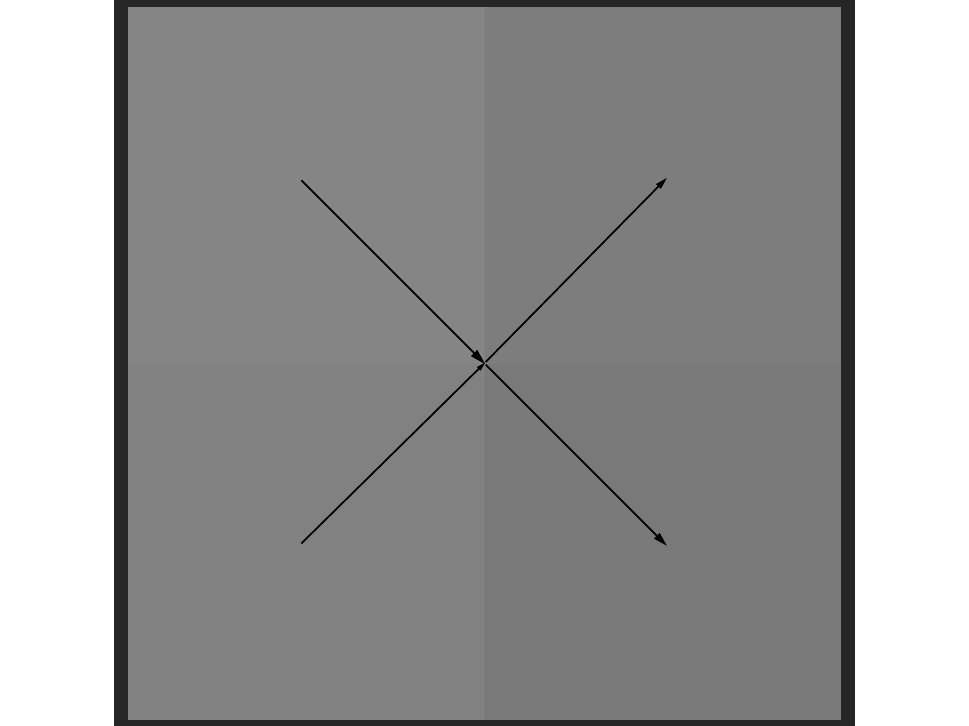}
        \subcaption{$H_1$}
        \label{fig::proof-aid::2-panel-per-side}
    \end{subfigure}
    \caption{The multigrid idea of Theorem 1 when $\ell=3$.  At each step, a transport plan is computed in each $2$x$2$ window.  Then the same problem is solved at the next coarser scale. In the above figures, the arrow tip area is proportional to the mass transported at each substep. The function $H_i$ is defined in \eqref{eqn::H-i-definition}.}
    \label{fig::proof-aid}
\end{figure}

\begin{proof}[Proof of Theorem 2] \label{sec::proof-of-claims::thm-1-proof}

Since $\struc{h_\ell} = \struc{h_\ell - \bar{\mu}}$ we can assume, without loss of generality, that $\bar \mu = 0$. Consider  the case $\ell = 1$, which will be used for the general setting later.  We construct a two-step plan that first moves all of the mass in $h_1^+$ to the point $\yc = ( 1/2, \dots, 1/2)$ at the center of the domain and then moves the mass from $\yc$ to $h_1^-$.\footnote{While the definition of the EMD in Eq. \ref{eqn::emd-definition} is still well-defined for delta function, the formula in Eq. \ref{eqn::benier-benamou} is not.  Thus while we use Eq. \ref{eqn::benier-benamou} for numerical calculations, we often rely on Eq. \ref{eqn::emd-definition} for theoretical bounds.}

Let $\gamma = \int_{\Omega} h^+_1 dy = \int_{\Omega} h^-_1 dy$, $\mu_0 = \int_{\Omega} h_1 dy$, and $\gamma_{1,k} = |\eta_{1,k} - \mu_0| |\omega_{1,k}|$. 
Then $\EMD(h^+_1,\gamma\delta_{\yc}) = \EMD(\gamma\delta_{\yc},h^-_1)$ and
\begin{align}
    \label{eqn::emd-calc-sum-of-panels}
    \struc{h_1} \equiv \EMD(h_1^+, h_1^-) \nonumber
    &\leq \EMD(h_1^+, \gamma \delta_{\yc}) + \EMD(\gamma \delta_{\yc}, h_1^-) \nonumber \\
    & = \sum_{k = 1}^{2^{d}} \EMD \left(|\eta_{1,k} - \mu_0|\chi_{1,k}, \gamma_{1,k}\delta_{\yc}\right).
\end{align}
Thus we turn our attention to computing the terms in the sum above. First,
\begin{align}
\label{eqn::panel_constant_scaling}
    \EMD(|\eta_{1,k} - \mu_0|\chi_{1,k}, \gamma_{1,k} \delta_{\yc}) = |\eta_{1,k} - \mu_0|\EMD(\chi_{1,k},|\omega_{1,k}| \, \delta_{\yc}).
\end{align}
There is only one one admissible transport plan (see from Eq. \ref{eqn::emd-definition}) between $\chi_{1,k}$ and $|\omega_{1,k}|\delta_{\yc}$; it simply moves the mass around each point of $\omega_{1,k}$ to $\yc$: 
\begin{equation}
    \pi\left(x^{(1)},x^{(2)}\right) =  \chi_{1,k} (x^{(1)}) \times \delta_{\yc}(x^{(2)}) 
\end{equation} 
 If we consider the more general case where $\omega_{1,k}$ has side length $l$, then upon a change of coordinates,

\begin{align}
    \label{eqn::panel_tp}
    \EMD(\chi_{1,k},|\omega_{1,k}| \delta_{\yc}) &= \int_\Omega \int_\Omega \norm{x^{(1)} - x^{(2)}}_2 \chi_{1,k} (x^{(1)}) \times \delta_{\yc}(x^{(2)}) dx^{(1)} dx^{(2)}\nonumber\\ 
    &= \int_{\omega_{1,k}}\int_\Omega \norm{x^{(1)} - x^{(2)}}_2\delta_{\yc}(x^{(2)}) dx^{(1)} dx^{(2)} \nonumber \\
    &= \int_{\omega_{1,k}} \norm{x^{(1)} - \yc}_2 dx^{(1)} = \int_{\left [0,l\right ]^d}\norm{x^{(1)}}_2 dx^{(1)} \nonumber\\
    &\leq \sqrt{d} \int_{\left [0,l\right ]^d} \norm{x^{(1)}}_\infty dx^{(1)} 
    \leq \sqrt{d} \frac{l^{d+1}}{2}\nonumber\\
\end{align}
%




Substituting \eqref{eqn::panel_constant_scaling} and \eqref{eqn::panel_tp} into \eqref{eqn::emd-calc-sum-of-panels} gives

\begin{align}
    \label{eqn::emd-calc-sum-of-vars}
    \struc{h_1} 
    &\leq \sum_{i=1}^{2^d} |\eta_{1,k} - \mu_0|\frac{\sqrt{d}l^{d+1}}{2} = \frac{\sqrt{d}}{2^{d+2}} \sum_{k=1}^{2^d} |\eta_{1,k} - \mu_0|,
\end{align}
where we have used the fact that when $\ell = 1, l = 2^{-1}$.  
A standard calculation shows that
\begin{equation}
    \label{eqn::variance-of-abs-of-var}
    \var(|\eta_{1,k} - \mu|) \leq \var(|\eta_{1,k}|),\quad i = 1,\dots,2^{d}.
\end{equation}
Further, w.l.o.g. $\expec{\eta_{1,k}} = 0$ and Lemma \ref{lem::expec-abs-is-less-than-stdev} give:
\begin{equation}
    \label{eqn::expec-of-abs}
    \expec{|\eta_{1,k} - \mu_0|} \leq \sigma
\end{equation}
with Eq. \ref{eqn::emd-calc-sum-of-vars} and get

\begin{align}
    \expec{\struc{h_1}} &\leq \frac{\sqrt{d} 2^{d}}{2^{(d+2)}} \sum_{k = 1}^{2^d} \expec{|n_{1,k} - \mu_0|} 
    \leq  \frac{\sqrt{d} 2^{d}}{2^{(d+2)}} \sigma = \frac{\sqrt{d}}{4}\sigma.
\end{align}
Now we consider the case when $\ell > 1$. Define the functions
\begin{align}
    \label{eqn::H-l-definition}
    H_\ell(y) &= h_\ell(y) = \sum^{2^{\ell d}}_{k = 1} \eta_{\ell,k}\chi_{\ell,k}(y)\\
    \label{eqn::H-i-definition}
    H_i(y) &= \sum^{2^{id}}_{k = 1} \mu_{i,k} \chi_{i,k}(y), \text{ where } \mu_{i,k} = \frac{1}{|\omega_{i,k}|}\int_{\omega_{i,k}} H_{i+1}(y) dy, \quad i = 0,1,\dots,\ell-1.
\end{align}
Instances of $H_i$ are shown in Fig. \ref{fig::proof-aid}.  
The function $h_\ell$ can be written as the telescoping sum
\begin{align}
    \label{eqn::telescope-sum}
    h_\ell = H_\ell
    = (H_\ell - H_{\ell - 1}) + (H_{\ell - 1} - H_{\ell - 2}) + \dots + (H_2 - H_1)  + (H_1 - H_0) + H_0.
\end{align}
Moreover, because  $H_i = \sum^{2^{d(i-1)}}_{k = 1}H_i\chi_{i - 1,k}$, it follows that
\begin{equation}
\label{eqn::sik}
H_i - H_{i-1} =  \sum^{2^{d(i-1)}}_{k = 1}s_{i-1,k},
\quad \text{where }
     s_{i-1,k}(y) = \left (H_i(y) - \mu_{i-1,k} \right )\chi_{i-1,k}(y).
\end{equation}
We apply $\struc{\cdot}$ to \eqref{eqn::telescope-sum}, using \eqref{eqn::sik}, the triangle inequality, and the fact that $\struc{H_0}=0$ (because it is a constant). The result is
\begin{align}
    \label{eqn::struc-h-ell}
\struc{h_\ell} 
\leq \sum^{\ell}_{i = 1} \struc{H_i - H_{i - 1}} 
\leq \sum^{\ell}_{i = 1} \sum^{2^{d(i - 1)}}_{k = 1} \struc{s_{i-1,k}}.
\end{align}
To evaluate $\struc{s_{i-1,k}}$, we repeat the argument used to generate Eq. \ref{eqn::emd-calc-sum-of-vars}. This gives

\begin{align}
    \label{eqn::struc-sik}
    \struc{s_{i-1,k}} 
    \equiv \EMD(s_{i-1,k}^+, s_{i-1,k}^-) 
    \leq \frac{\sqrt{d}l^{d+1}}{2} \sum_{k':\omega_{i,k'} \subset \omega_{i-1,k}} |\mu_{i,k'} - \mu_{i-1,k}|.
\end{align}
By construction, 
\begin{equation}
\label{eq::mu-avg}
    \mu_{i-1,k} = 2^{-d} \sum_{k':\omega_{i,k'} \subset \omega_{i-1,k}} \mu_{i,k}.
\end{equation}
It follows that 
the random variable $(\mu_{i+1,k'} - \mu_{i,k})$ that appears in \eqref{eqn::struc-sik} has zero mean.  Thus Lemma \ref{lem::expec-abs-is-less-than-stdev} applies and
\begin{align}
    \expec{|\mu_{i,k'} - \mu_{i-1,k}|}
    \leq \left(\var[|\mu_{i,k'} - \mu_{i-1,k}|]\right)^{\frac{1}{2}}
    &\leq \left(\var[|\mu_{i,k'}|]\right)^{\frac{1}{2}} := \sigma_i,
\end{align}
where the last two inequalities above follows from standard probability theory.  Also, because of \eqref{eq::mu-avg}, another standard probablity result gives
\begin{equation}
\label{eq::sigma-i}
     \sigma_{i} = 2^{-\frac{d}{2}} \sigma_{i+1} = \dots = 2^{-\frac{d}{2}(\ell - i)} \sigma_\ell, \quad i = 1,\dots, \ell.
\end{equation}
We now take the expectation of \eqref{eqn::struc-sik}, using the fact that $\omega_{i,k'}$ has side length $l = 2^{-i}$, along with the triangle and \eqref{eq::sigma-i},.  The result is

\begin{align}
    \label{eqn::struc-s}
    \expec{\struc{s_{i-1,k}}} 
    \leq \sqrt{d} 2^{-i(d+1)-1} \sum_{k':\omega_{i,k'} \subset \omega_{i-1,k}} 
             2^{-\frac{d}{2}(\ell - i)} \sigma_\ell
    =  \sqrt{d} 2^{-\frac{id}{2} -i+d- \frac{d\ell}{2} -1} \sigma_\ell
\end{align}
Substituting this bound into \eqref{eqn::struc-h-ell} gives
\begin{align}
\label{eqn::inductive-step-expansion}
   \expec{\struc{h_\ell}}  
   \leq \sum^{\ell}_{i = 1} 
        \sum^{2^{d(i - 1)}}_{k = 1} \sqrt{d} 2^{-\frac{id}{2} -i+d- \frac{d\ell}{2} -1} \sigma_\ell 
 = \frac{\sqrt{d}\sigma_\ell}{2^{1+\frac{\ell d}{2}}}\sum^{\ell}_{i = 1} \left (2^{\frac{d}{2} - 1}\right)^i
\end{align}




%
If $d = 2,$ then $2^{\frac{d}{2} - 1} = 1$ and Eq. \ref{eqn::inductive-step-expansion} becomes 
\begin{align}
    \label{eqn::thm-dim-2-final-answer}
    \expec{\struc{h_\ell}} = \expec{\struc{H_\ell}} &\leq \frac{2\sigma_\ell}{2^{1+i}}\ell = \frac{\sigma_\ell \ell}{2^{\ell}}.
\end{align}
If $d \geq 3$, then $2^{\frac{d}{2}-1}/(2^{\frac{d}{2}-1} -1) \leq 4$, so the geometric sum in Eq. \ref{eqn::inductive-step-expansion} is
\begin{align}
    \label{eqn::geometric-sum-answer}
    \sum^{\ell}_{i = 1} \left (2^{\frac{d}{2} - 1}\right)^i 
     = \frac{2^{\left (\frac{d}{2} - 1\right)(\ell + 1)} - 2^{\frac{d}{2} - 1}}{2^{\frac{d}{2} - 1} - 1}
    \leq \frac{2^{\frac{d}{2} - 1} 2^{\left(\frac{d}{2} - 1\right)\ell} } {{2^{\frac{d}{2} - 1} - 1}}
    \leq 2^{\frac{\ell d}{2} - \ell + 2}.
\end{align}
Thus for $d \geq 3$,
\begin{align}
\label{eqn::struc-noise-estimate-grometric-series}
    \expec{\struc{h_\ell}} 
    &\leq \sqrt{d}\sigma_\ell
    \frac{2^{\frac{\ell \sqrt{d}}{2}  - \ell + 2}}
        {2^{1+\frac{\ell \sqrt{d}}{2}}}
    = \sqrt{d}\sigma_\ell 2^{-\ell+1}
\end{align}


Finally, setting $\epsilon = 2^{-\ell}$ gives
\begin{equation}
    \expec{\struc{h_\ell}} \leq \sigma
    \begin{cases}
        -\epsilon_\ell \log(\epsilon_\ell)&\text{ when $d = 2$}\\
        2\sqrt{d}\epsilon_\ell&\text{ when $d > 2$}\\
    \end{cases}
\end{equation}
This completes the proof.
\end{proof}

\begin{proof}[Proof of Lemma \ref{thm::noise-calc-l2}] The proof follows directly from the definition of $h_\ell$ in the statement of Thm. \ref{thm::noise-calc}:
\begin{align}
   \expec{\norm{h_\ell}_2^2} = \expec{\int_{[0,1)^d} \left (h_\ell(y)\right )^2 dy}= \sum^{2^{\ell d}}_{k = 1} \expec{\eta_{\ell,k}^2} 2^{-\ell d} = 2^{-\ell d}\sum^{2^{\ell d}}_{k=1} \sigma^2= \sigma^2.
\end{align}
\end{proof}

\begin{proof}[Proof of Theorem \ref{thm::smooth-func}]

Without loss of generality, assume that $\phi$ is positive a.e.  (If not, simply replace $\phi$ by $\phi - \essinf{\phi}$ and use \eqref{eq::struc-const}.) By construction, $\phi$ and $R_\ell \phi$ have the same average over $Y$, which we denote by $\mu$. Thus by Lemmas \ref{lem::triangle-inequality-emd} and \ref{lem::struc-and-emd-of-mean},
\begin{align}
    \label{eqn::struc-triangle-sum}
    \struc{R_\ell\phi} = \EMD(R_\ell\phi,\mu) \leq \EMD(R_\ell\phi, \phi) + \EMD(\phi,\mu) = \EMD(R_\ell\phi,\phi) + \struc{\phi}.
\end{align}
Hence
\begin{align}
 \label{eqn::struc-diff-oneside}
    \struc{R_\ell\phi} - \struc{\phi} \leq \EMD(R_\ell \phi, \phi).
\end{align}
One the other hand, switching the roles of $R_\ell\phi$ and $\phi$  Eq. \ref{eqn::struc-triangle-sum} gives
\begin{align}
 \label{eqn::struc-diff-otherside}
    \struc{\phi} - \struc{R_\ell\phi} \leq \EMD(R_\ell \phi, \phi)
\end{align}
Together \eqref{eqn::struc-diff-oneside} and \eqref{eqn::struc-diff-oneside} imply the bound
\begin{align}
    \label{eqn::abs-struc-ri-phi-phi-bound}
    |\struc{R_\ell\phi} - \struc{R_\ell\phi}| \leq \EMD(R_\ell \phi, \phi).
\end{align}
We now bound $\EMD(R_\ell\phi, \phi)$. For any $\ell,i$ $\int_{\omega_{\ell,i}} R_\ell \phi dy = \int_{\omega_{\ell,i}} \phi dy$.  Thus by Lemma \ref{lem::EMD-subadditivity}, 
\begin{equation}
    \label{eqn::EMD-restriction-sum}
    \EMD(R_\ell \phi, \phi) \leq \sum^{2^{\ell d}}_{i = 1} \EMD(R_\ell\phi \chi_{\ell,i}, \phi\chi_{\ell,i})
\end{equation}
and further by Lemma \ref{lem::EMD-bound-by-L1-norm}, for $i = 1, \dots, 2^{\ell d}$ 
\begin{align}
    \label{eq::EMD-L1-loc-bound}
    \EMD(R_\ell\phi \chi_{\ell,i}, \phi\chi_{\ell,i}) \leq \norm{R_\ell\phi - \phi}_{L^1(\omega_{\ell,i})} {d}^{1/2} 2^{-\ell}
\end{align}
Now we bound $\norm{R_\ell\phi - \phi}_{L^1(\omega_{\ell, i})}$.  Since $\phi \in C^1\left (\overline Y\right)$, it follows that, for $y \in \omega_{\ell,i}$
 \begin{align}
   | R_\ell\phi(y) - \phi(y) | 
   &= \frac{1}{|\omega_{\ell,i}|} \left |\int_{\omega_{\ell,i}} (\phi(y') - \phi(y)) dy' \right | \nonumber \\
  &\leq \sup_{y \in \omega_{\ell,i}} |\nabla \phi(y)| \sup_{y \in \omega_{\ell,i}} |y'-y|
  \leq d^{1/2} 2^{-\ell} \sup_{y \in \omega_{\ell,i}} |\nabla \phi(y)| 
 \end{align}
Therefore
\begin{equation}
    \label{eqn::L1-norm-function-difference-taylors}
    \norm{R_\ell\phi - \phi}_{L^1(\omega_{\ell,i})} 
    \leq |\omega_{\ell,i}| d^{1/2}2^{-\ell}\sup_{y \in \omega_{\ell,i}}|\nabla \phi(y)|
    = d^{1/2}2^{-(d+1) \ell}\sup_{y \in \omega_{\ell,i}}|\nabla \phi(y)|.
\end{equation}
Combining Eq. \ref{eqn::abs-struc-ri-phi-phi-bound}, \eqref{eq::EMD-L1-loc-bound}, and Eq. \ref{eqn::L1-norm-function-difference-taylors} yields
\begin{align}
    |\struc{R_\ell\phi} - \struc{\phi}| 
    &\leq \sum^{2^{\ell d}}_{i = 1} d 2^{-(d+2)\ell } \sup_{y \in \omega_{\ell,i}}|\nabla \phi(y)|  
    \leq d2^{-2\ell}\sup_{y \in Y}|\nabla \phi(y)|
    \equiv C(|\nabla \phi|)d\epsilon_\ell^{2},
\end{align}
where $C(|\nabla \phi|) = \sup_{y \in Y} |\nabla \phi(y)|$ and $\epsilon_\ell = 2^{-\ell}$.  This completes the proof.
\end{proof}

\section{Line Integral Operators} \label{sec::LIO}

Recall from Section \ref{sec::background} the spaces $\mathcal{U}$ and $\mathcal{B}$ of functions defined on domains $X$ and $Y$, respectively. An operator $\mathcal{L}\colon \mathcal{U}\rightarrow \mathcal{B}$ is a line integral operators (LIO), if $\forall f \in \mathcal{U},$

\begin{equation}
    \label{eqn::LIO-definition}
    (\mathcal{L}f)(y) = \int_{P_y} f(x) dx = \int^1_0 f(\hat x(t;y))\hat x'(t;y) dt,
\end{equation}
where for each $y \in Y$, $P_y = \{ \hat x(t;y) : t \in (0,1) \} \subset X$, and $\hat x(t;y)$ is continuous in $t$ and $y$. In particular, if $f$ is a continuous on $X$, then $\mathcal{L}f$ is continuous on $Y$. Figs. \ref{fig::LIO-cont::paths} and \ref{fig::LIO-cont::points} illustrate a LIO in two dimensions.  The recipe we used to generate examples of $\hat{x}$ is given below.

\begin{figure}[!ht]
    \centering
        \begin{subfigure}[b]{.32\linewidth}
        \centering
        \includegraphics[width=\textwidth,keepaspectratio]{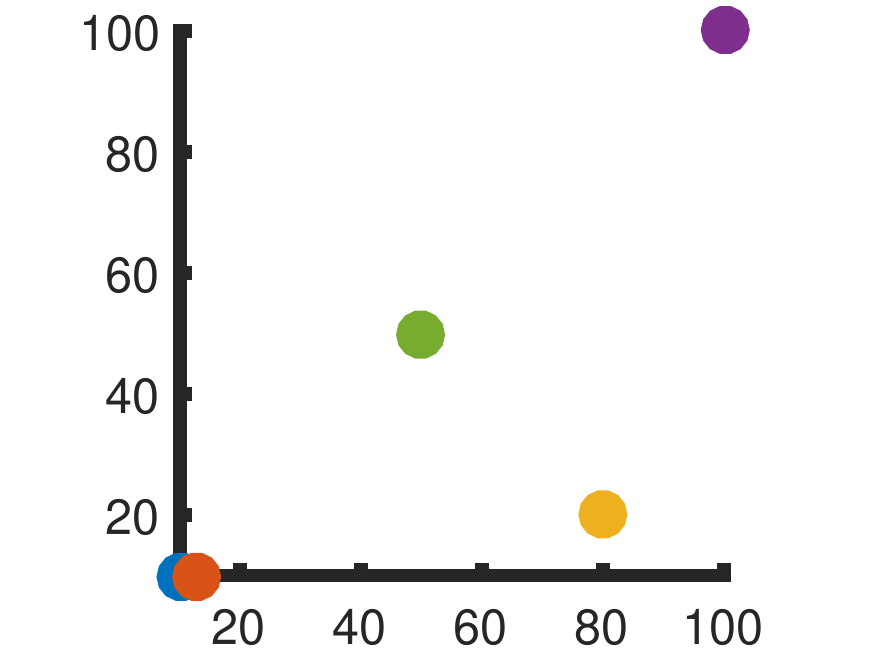}
        \subcaption{The values of $y$.}
        \label{fig::LIO-cont::points}
    \end{subfigure}
    \begin{subfigure}[b]{.32\linewidth}
        \centering
        \includegraphics[width=\textwidth,keepaspectratio]{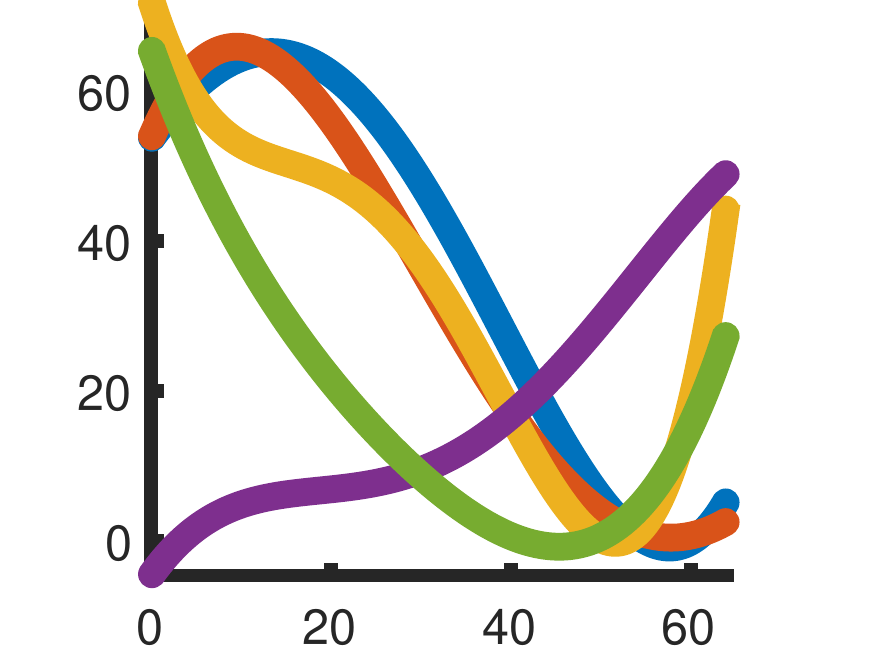}
        \subcaption{The curves $P_y$.}
        \label{fig::LIO-cont::paths}
    \end{subfigure}%
    \begin{subfigure}[b]{.32\linewidth}
        \includegraphics[width=\textwidth,keepaspectratio]{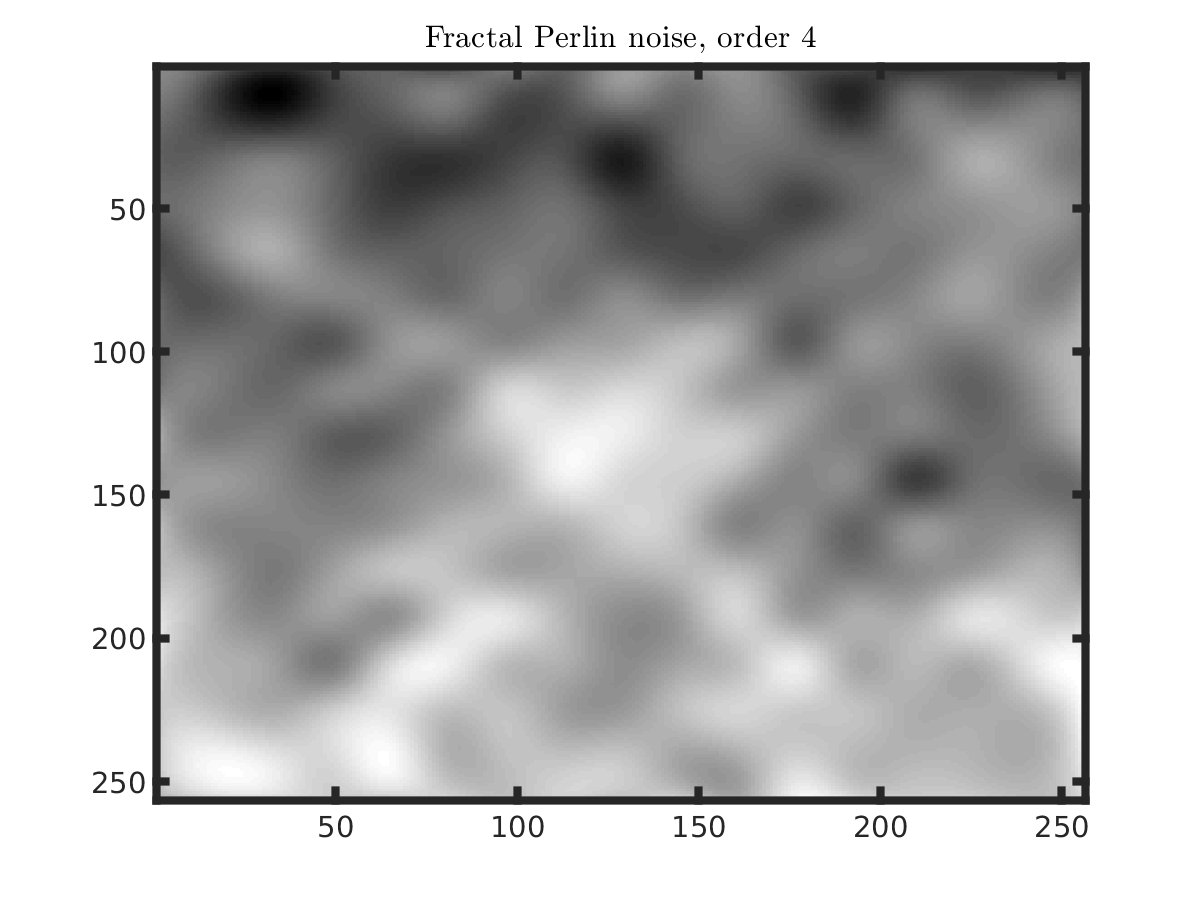}
        \subcaption{Example of Perlin noise.}
        \label{figure::Perlin-noise-order-4}
    \end{subfigure}
    \caption{An example of a LIO. Points on the right are used to generate curves on the left of the same color. Coefficients for the parameterization in \eqref {eqn::x-bar-definition} of $P_y$ come from Perlin noise.}
 
    \label{fig::LIO-cont}
\end{figure}

To discretize $\mathcal{L}$, we generate a path $P_y$ for each hypercube $\omega \subset Y$. Line integrals along these paths are approximated via quadrature. For all LIOs, we use same the quadratures, and $X$, and $Y$.

To construct the LIO for Experiments 1 - 3, we do the following. 

\begin{enumerate}
    \item {\bf Construction of numerical grids.} In all of our computational examples, the domains $X$ and $Y$ are unit squares in $\bbR^2$.  We discretize these domains with $N^x$ and $N^y$ points, respectively, on each side and define grid points
    \begin{subequations}
    \begin{gather}
    \label{eqn::quadrature-approx-definition}
    x_{i,j} = \left ( i\Delta x, j \Delta x\right ),  \quad 0 \leq i,j \leq N^x -1,
   \\
    y_{k,l} = \left ( k\Delta y, l \Delta y\right ), \quad 0 \leq k,l \leq N^y -1,
    \end{gather}
    \end{subequations}
    where $\Delta x = 1/N^x$ and  $\Delta y = 1/ N^y$. 
     We then generate values $u_{i,j}$ by sampling a prescribed function at the points $x_{i,j}$.  An illustrative example is given in Fig. \ref{fig::exp-1::ground-truth}, where piecewise smooth rings have been sampled on a $64\times 64$ grid.
    
    \item {\bf Generation of smooth paths.} To form $\hat x$, we first sample coefficients $\alpha_{p,r}$ for $p = 0,\dots, 4$ and $r = 1,2$ from Perlin noise \cite{perlin1985image,perlin2002improving} of order four. In Fig. \ref{figure::Perlin-noise-order-4}, a realization of one such coefficient as a function of $y$ is shown on a $256\times 256$ grid. Given these coefficients, we let 
    $\bar x =( x^{(1)},x^{(2)})$ be polynomials in $t$:
    \begin{align}
        \label{eqn::x-bar-definition}
        \bar x^{(r)}(t;y_{k,l}) &= \sum^{4}_{p = 0}\frac{\alpha_{p,r}(y_{k,l})}{p!} t^p,
        \quad r=1,2,
    \end{align}
    and then let $\hat x$ be the following normalization of $\bar x$:
    \begin{align}
        \hat x^{(r)}(t;y_{k,l}) = \frac{\bar x^{(r)}(t;y_{k,l}) - \min_{s} \bar x^{(r)}(s;y_{k,l})}{\max_{s}\bar x^{(r)}(s;y_{k,l}) - \min_{t}\bar x^{(r)}(s;y_{k,l})},
         \quad r=1,2.
    \end{align}
    
    \item To generate the components of $\bL$, we first compute
    \begin{align}
        I_{k,l} = \left\{ (i,j) | \exists s \in [0,1] \text{ s.t.}  (i,j) = \argmin_{(i,j)} \norm{x_{i,j} - \hat x(s;y_{k,l})} \right\}.
    \end{align}
    and then set the values of $\bL$ directly by
    \begin{align}
        L_{(k,l),(i,j)} = 
        \begin{cases}
        \frac{1}{|I_{k,l}|} &\text{ if }(i,j) \in I_{k,l}.\\
        0 &\text{ else }
        \end{cases}
    \end{align}
\end{enumerate}

\bibliographystyle{plain}
\bibliography{main.bbl}

\begin{thebibliography}{10}

\bibitem{arridge1999optical}
Simon~R Arridge.
\newblock Optical tomography in medical imaging.
\newblock {\em Inverse problems}, 15(2):R41, 1999.

\bibitem{Becker2012lbfgs}
Stephen Becker.
\newblock Lbfgsb (l-bfgs-b) mex wrapper, 2012--2015.

\bibitem{broyden1970convergence}
Charles~George Broyden.
\newblock The convergence of a class of double-rank minimization algorithms 1.
  general considerations.
\newblock {\em IMA Journal of Applied Mathematics}, 6(1):76--90, 1970.

\bibitem{chahine1970inverse}
Moustafa~T Chahine.
\newblock Inverse problems in radiative transfer: Determination of atmospheric
  parameters.
\newblock {\em Journal of the Atmospheric Sciences}, 27(6):960--967, 1970.

\bibitem{chan2005image}
Tony~F Chan and Jianhong~Jackie Shen.
\newblock {\em Image processing and analysis: variational, PDE, wavelet, and
  stochastic methods}, volume~94.
\newblock Siam, 2005.

\bibitem{daubechies1988orthonormal}
Ingrid Daubechies.
\newblock Orthonormal bases of compactly supported wavelets.
\newblock {\em Communications on pure and applied mathematics}, 41(7):909--996,
  1988.

\bibitem{engquist2013application}
Bjorn Engquist and Brittany~D Froese.
\newblock Application of the wasserstein metric to seismic signals.
\newblock {\em arXiv preprint arXiv:1311.4581}, 2013.

\bibitem{engquist2016optimal}
Bjorn Engquist, Brittany~D Froese, and Yunan Yang.
\newblock Optimal transport for seismic full waveform inversion.
\newblock {\em arXiv preprint arXiv:1602.01540}, 2016.

\bibitem{evans1997partial}
Lawrence~C Evans.
\newblock Partial differential equations and monge-kantorovich mass transfer.
\newblock {\em Current developments in mathematics}, 1997(1):65--126, 1997.

\bibitem{evans1999differential}
Lawrence~C Evans and Wilfrid Gangbo.
\newblock {\em Differential equations methods for the Monge-Kantorovich mass
  transfer problem}, volume 653.
\newblock American Mathematical Soc., 1999.

\bibitem{fletcher1970new}
Roger Fletcher.
\newblock A new approach to variable metric algorithms.
\newblock {\em The computer journal}, 13(3):317--322, 1970.

\bibitem{freeman1992sar}
Anthony Freeman.
\newblock Sar calibration: An overview.
\newblock {\em IEEE Transactions on Geoscience and Remote Sensing},
  30(6):1107--1121, 1992.

\bibitem{goldfarb1970family}
Donald Goldfarb.
\newblock A family of variable-metric methods derived by variational means.
\newblock {\em Mathematics of computation}, 24(109):23--26, 1970.

\bibitem{goldstein2009split}
Tom Goldstein and Stanley Osher.
\newblock The split bregman method for l1-regularized problems.
\newblock {\em SIAM journal on imaging sciences}, 2(2):323--343, 2009.

\bibitem{golub1996matrix}
Gene~H Golub.
\newblock {\em Matrix computations}.
\newblock Johns Hopkins University Press, 1996.

\bibitem{golub1999tikhonov}
Gene~H Golub, Per~Christian Hansen, and Dianne~P O'Leary.
\newblock Tikhonov regularization and total least squares.
\newblock {\em SIAM Journal on Matrix Analysis and Applications},
  21(1):185--194, 1999.

\bibitem{hansen1992analysis}
Per~Christian Hansen.
\newblock Analysis of discrete ill-posed problems by means of the l-curve.
\newblock {\em SIAM review}, 34(4):561--580, 1992.

\bibitem{hansen1993use}
Per~Christian Hansen and Dianne~Prost O’Leary.
\newblock The use of the l-curve in the regularization of discrete ill-posed
  problems.
\newblock {\em SIAM Journal on Scientific Computing}, 14(6):1487--1503, 1993.

\bibitem{kennedy2001bayesian}
Marc~C Kennedy and Anthony O'Hagan.
\newblock Bayesian calibration of computer models.
\newblock {\em Journal of the Royal Statistical Society: Series B (Statistical
  Methodology)}, 63(3):425--464, 2001.

\bibitem{kirsch2011introduction}
Andreas Kirsch.
\newblock {\em An introduction to the mathematical theory of inverse problems},
  volume 120.
\newblock Springer Science \& Business Media, 2011.

\bibitem{li2016fast}
Wuchen Li, Stanley Osher, and Wilfrid Gangbo.
\newblock A fast algorithm for earth mover's distance based on optimal
  transport and l1 type regularization.
\newblock {\em arXiv preprint arXiv:1609.07092}, 2016.

\bibitem{li2017parallel}
Wuchen Li, Ernest~K Ryu, Stanlet Osher, Wotao Yin, and Wolfred Gangbo.
\newblock A parallel method for earth mover’s distance.
\newblock {\em Journal of Scientific Computing}, page 75(1), 2018.

\bibitem{mallat1989multiresolution}
Stephane~G Mallat.
\newblock Multiresolution approximations and wavelet orthonormal bases of
  l$^2$(r).
\newblock {\em Transactions of the American mathematical society},
  315(1):69--87, 1989.

\bibitem{oliver2008inverse}
Dean~S Oliver, Albert~C Reynolds, and Ning Liu.
\newblock {\em Inverse theory for petroleum reservoir characterization and
  history matching}.
\newblock Cambridge University Press, 2008.

\bibitem{perlin1985image}
Ken Perlin.
\newblock An image synthesizer.
\newblock {\em ACM Siggraph Computer Graphics}, 19(3):287--296, 1985.

\bibitem{perlin2002improving}
Ken Perlin.
\newblock Improving noise.
\newblock In {\em ACM Transactions on Graphics (TOG)}, volume~21, pages
  681--682. ACM, 2002.

\bibitem{rudin1992nonlinear}
Leonid~I Rudin, Stanley Osher, and Emad Fatemi.
\newblock Nonlinear total variation based noise removal algorithms.
\newblock {\em Physica D: Nonlinear Phenomena}, 60(1-4):259--268, 1992.

\bibitem{ryu2018transport}
Ernest Ryu, Yongxin Chen, Wuchen Li, and Stanley Osher.
\newblock Vector and matrix optimal mass transport: Theory, algorithm, and
  applications.
\newblock {\em arXiv}, 2017.

\bibitem{schneider2012tomographic}
Kai Schneider, Romain Nguyen~van Yen, Nicolas Fedorczak, Frederic Brochard,
  Gerard Bonhomme, Marie Farge, and Pascale Monier-Garbet.
\newblock Tomographic reconstruction of tokamak plasma light emission using
  wavelet-vaguelette decomposition.
\newblock In {\em APS Meeting Abstracts}, 2012.

\bibitem{schneider1996calibration}
Uwe Schneider, Eros Pedroni, and Antony Lomax.
\newblock The calibration of ct hounsfield units for radiotherapy treatment
  planning.
\newblock {\em Physics in Medicine \& Biology}, 41(1):111, 1996.

\bibitem{shanno1970conditioning}
David~F Shanno.
\newblock Conditioning of quasi-newton methods for function minimization.
\newblock {\em Mathematics of computation}, 24(111):647--656, 1970.

\bibitem{villani2008optimal}
C{\'e}dric Villani.
\newblock {\em Optimal transport: old and new}, volume 338.
\newblock Springer Science \& Business Media, 2008.

\bibitem{wingen2015regularization}
Andreas Wingen, MW~Shafer, Ezekial~A Unterberg, Judith~C Hill, and Donald~L
  Hillis.
\newblock Regularization of soft-x-ray imaging in the diii-d tokamak.
\newblock {\em Journal of Computational Physics}, 289:83--95, 2015.

\bibitem{yang2018application}
Yunan Yang, Bj{\"o}rn Engquist, Junzhe Sun, and Brittany~F Hamfeldt.
\newblock Application of optimal transport and the quadratic wasserstein metric
  to full-waveform inversion.
\newblock {\em Geophysics}, 83(1):R43--R62, 2018.

\bibitem{zhu1994lbfgs}
Ciyou Zhu, Richard~H Byrd, Peihuang Lu, and Jorge Nocedal.
\newblock Lbfgs-b: Fortran subroutines for large-scale bound constrained
  optimization.
\newblock {\em Report NAM-11, EECS Department, Northwestern University}, 1994.

\end{thebibliography}
\end{document}